\documentclass{article} 
\usepackage{graphicx} 
\usepackage[utf8]{inputenc}
\usepackage[OT1]{fontenc}
\usepackage{geometry} 
\usepackage{dsfont}
\usepackage{amsfonts,amsmath,amsthm,amssymb}
\usepackage[authoryear]{natbib}
\usepackage{xcolor}
\usepackage{hyperref}
\usepackage{stmaryrd}
 \usepackage{todonotes}
\usepackage{appendix}
\usepackage{subcaption}
\usepackage{tikz}
\usepackage{authblk}

\newtheorem{lemma}{Lemma}
\newtheorem{prop}{Proposition}
\newtheorem{theorem}{Theorem}
\newtheorem{corollaire}{Corollary}
\newtheorem{definition}{Definition}
\newtheorem{remark}{Remark}

\newcommand{\N}{\mathbb{N}}
\newcommand{\R}{\mathbb{R}}
\newcommand{\Ss}{\mathcal{S}}
\newcommand{\1}{\mathds{1}}
\newcommand{\veco}{\mathbf{1}}

\newcommand{\bZ}{\mathbf{Z}_n}
\newcommand{\bz}{\mathbf{z}_n}
\newcommand{\btheta}{\boldsymbol{\theta}}

\newcommand{\Diag}{\tDiag(R^\intercal\veco)}
\DeclareMathOperator{\tDiag}{Diag}

\begin{document}
\title{A Threshold Constant for Exact Community Recovery in the Degree-Corrected Poisson Stochastic Block Model
}

\author[1]{Lucie ARTS}
\affil[1]{  Sorbonne Université, Université Paris Cité, CNRS, Laboratoire de Probabilités, Statistique et Modélisation, LPSM, F-75005 Paris, France }

\date{\today}

\maketitle

\begin{abstract}
We study exact community recovery in the Degree-Corrected Poisson Stochastic Block Model (DC-SBM) through the maximum profile likelihood estimator. Our main contribution is identifying an explicit complexity constant $C(\pi,S)$, which closely captures the exact recovery threshold. This constant combines a weighted Chernoff--Hellinger separation between communities with a Kullback--Leibler correction induced by the degree-normalized block volumes. We rigorously prove that $C(\pi,S)>1$ ensures the strong consistency of the maximum profile likelihood estimator, yielding exact community recovery down to the logarithmic sparsity scale. We conjecture that $C(\pi,S)=1$ represents the sharp information-theoretic threshold, providing an upper bound on the exact-recovery transition, and we discuss analogies with the binary SBM and binary DC-SBM cases.
Additionally, we establish weak consistency when the average degree diverges, offering a unified picture across sparse and dense regimes. The proofs require new control of the profile likelihood in the presence of heterogeneous degree parameters, whose node-specific weights fundamentally alter the geometry and concentration of the recovery problem.
\end{abstract}

\section{Introduction}

Network data naturally emerge in various scientific fields, ranging from biology and social sciences to computer science and economics. A fundamental task in unsupervised network analysis is community detection, which partitions the nodes of a graph into latent clusters based solely on its topological structure. Identifying these communities is crucial for uncovering the functional organization of complex systems and providing a compressed representation of massive networks. However, real-world networks often exhibit highly skewed degree distributions-where a few ``hub'' nodes possess significantly more connections than the rest of the graph. These structural variations highlight the absolute necessity for models capable of accommodating such heterogeneous connectivity patterns within communities.

To study community detection from a rigorous mathematical perspective, generative probabilistic models are the standard theoretical framework. Among them, the Stochastic Block Model \citep[SBM][]{holland1983stochastic, abbe2017community} is arguably the most widely studied. In this model, nodes are assigned to latent communities, and edges are drawn independently with probabilities depending exclusively on the community assignments of their endpoints.  While this framework has fostered a deep understanding of the fundamental limits of community detection, culminating in the discovery of sharp information-theoretic thresholds for exact recovery \citep{abbe2015exact, mossel2016consistency, CL24}, it suffers from a major empirical limitation. It assumes that all nodes within the same community share the same expected degree. This homogeneity assumption starkly contrasts with the reality of empirical networks, which heavily feature power-law degree distributions and highly connected hubs.

To overcome this limitation, \citep{karrer2011stochastic} introduced the Degree-Corrected Stochastic Block Model (DC-SBM). By incorporating node-specific parameters alongside the community structure, this model accommodates arbitrary expected degree sequences within each block. This extension significantly improves the goodness-of-fit on real-world networks by preventing high-degree nodes from being incorrectly clustered together based solely on their connectivity volume rather than their actual community ties. Crucially, it provides a rigorous probabilistic framework in which the widely used modularity criterion \citep{newman2004finding} can be formally interpreted as a profile likelihood of the model \citep{karrer2011stochastic}.

However, while the phase transitions and exact thresholds of the standard SBM have been fully mapped out, the theoretical landscape for the DC-SBM remains critically incomplete. Incorporating arbitrary degree-correction parameters $\theta_i$ breaks the statistical exchangeability of nodes within a block, severely altering classical concentration bounds and making classification hardness explicitly dependent on individual node degrees. Consequently, prior theoretical guarantees for the DC-SBM have either focused on weak consistency \citep{lei2015consistency, amini2013pseudo} or, when addressing strong consistency, have been restricted to dense regimes and constrained by strict structural assumptions on the connectivity matrix \citep{zhao2012consistency}. The fundamental threshold, namely the exact mathematical boundary at which community recovery becomes possible in the DC-SBM, has remained completely unknown.
In this article, we solve this open problem by establishing the exact theoretical guarantees of the maximum profile likelihood community estimator for the Poisson DC-SBM. Our most significant contribution is the explicit identification of a novel complexity constant, $C(\pi,S)$ given in \eqref{def_Cpi}, which governs the phase transition for exact community recovery in degree-heterogeneous networks.

Our main contributions center on establishing rigorous theoretical guarantees for both the exact and weak consistency of the DC-SBM under heterogeneous degree regimes. Regarding the exact recovery threshold, we significantly improve upon the foundational work of \citet{zhao2012consistency} through a highly refined variational analysis of the likelihood landscape around its population-level limit. We prove that exact community recovery is achieved down to the sharp logarithmic sparsity threshold, without requiring restrictive assumptions on the connectivity matrix. Most importantly, we explicitly define the threshold constant $C(\pi,S)$ that dictates this recovery (see equation \eqref{def_Cpi}). This constant captures a delicate and previously unidentified balance between a degree-weighted Chernoff--Hellinger divergence and a Kullback--Leibler divergence term based on block volume normalizations.
To the best of our knowledge, we are the first to explicitly exhibit what appears to be the fundamental threshold constant for exact community detection in the Poisson DC-SBM. We rigorously prove that when $C(\pi,S) > 1$, our maximum profile likelihood estimator achieves strong consistency, guaranteeing convergence above this threshold. 
While we do not formally establish the lower bound, in the standard Poisson SBM setting our condition $C(\pi,S) > 1$ simplifies exactly to $(\sqrt{\lambda} - \sqrt{\mu})^2 > K$, matching the sharp threshold established by \citet{ADL}. Driven by this exact structural alignment with known optimal limits, we strongly conjecture that $C(\pi,S) = 1$ constitutes the sharp information-theoretic threshold, below which exact recovery becomes fundamentally impossible for any algorithm. 
Furthermore, we specifically adapt our union bound arguments to ensure our results remain valid even for the worst-case scenario imposed by the least connected nodes. By pinpointing this constant, we bring the theoretical understanding of the DC-SBM to the same level of completeness as its standard counterpart.
Beyond exact recovery, we also establish the weak consistency of the estimator across highly heterogeneous asymptotic regimes. We demonstrate that the fraction of misclassified nodes asymptotically vanishes in probability as the network size grows, provided the average degree diverges. To accurately capture the geometry of the DC-SBM, we introduce a weighted error metric that naturally scales the impact of each node by its degree parameter $\theta_i$, ensuring that our theoretical guarantees reflect the true structural importance of hubs and peripheral nodes alike.

The remainder of this article is organized as follows. In Section~\ref{sect_model}, we introduce the Degree-Corrected Poisson Stochastic Block Model and derive the profile likelihood criterion. Section~\ref{sect_main_results} formalizes our main theoretical results, explicitly detailing the threshold constant $C(\pi,S)$ and the weak and strong consistency theorems. To support these findings, Sections~\ref{sect_aux_weak} and \ref{sect_aux_strong} establish the necessary auxiliary results, with their respective proofs deferred to Sections~\ref{proof_aux_weak} and \ref{proof_aux_strong}. Finally, Sections~\ref{sec_proof_weak} and \ref{sec_proof_strong} provide the formal proofs of our main theorems, which rely on a rigorous analysis of the confusion matrix and uniform concentration bounds for the discrepancy process.

\section{Model and estimation framework}\label{sect_model}

This section introduces the Degree-Corrected Stochastic Block Model (DC-SBM) and the associated likelihood-based estimation framework. We first describe the probabilistic model generating the observed network, then define the sufficient statistics used for inference, and finally derive the profile likelihood criterion that serves as the basis for community detection.

\subsection{Degree-Corrected Stochastic Block Model (DC-SBM)}

We observe a symmetric adjacency matrix $\mathbf{A}_{n\times n} \in \N^{n \times n}$ of a weighted undirected graph with integer weights. For $i \neq j$, $A_{ij} \in \N$ is the number of edges between nodes $i$ and $j$, while for $i = j$, $A_{ii} $ is defined as twice the number of self-loops at node $i$. 
The $n$ nodes are partitioned into $k$ latent groups, encoded by the random vector $\mathbf{Z}_n=(Z_1,\dots, Z_n)$, where $Z_{i} \in \{1,\dots, k\}$ denotes the label of the $i$-th vertex. Throughout this article, we assume that the number of communities $k$ is fixed and independent of $n$. The labels are assumed to be independent and identically distributed (i.i.d.) with block proportions $\pi=(\pi_1,\dots,\pi_k)$, such that
\[
\mathbb{P}(Z_i=a)=\pi_a , \qquad a=1,\dots,k .
\]
The vector $\boldsymbol{\theta}=(\theta_1,\dots,\theta_n)$ contains the degree-correction parameters, where $\theta_i>0$ controls the expected degree of vertex $i$. 
Let $S = (S_{ab})_{1 \leq a,b \leq k}$ be a full-rank, symmetric matrix with non-negative entries. Conditionally on the latent labels $\mathbf{Z}_n$, the edges $\{A_{ij}\}_{1 \leq i \leq j \leq n}$ are independent random variables such that for all $1 \leq i \leq j \leq n$ the edges $A_{ij}$ follow a Poisson distribution $\mathcal{P}(\rho_n \theta_i \theta_j S_{Z_i Z_j})$, whereas the self-loops $A_{ii}$ follow twice a Poisson distribution $\mathcal{P}\left(\frac{1}{2}\rho_n\theta_i^2 S_{Z_i Z_i}\right)$. For all $1 \leq i \leq j \leq n$, the conditional expectation of the edges is unified under the form:$$\mathbb{E}[A_{ij}\mid Z_i, Z_j]= \rho_n\theta_i \theta_j S_{Z_i Z_j}.$$
The parameter $\rho_n>0$, which is supposed to be known, is used to encode the dense ($\rho_n=1$) and sparse ($\rho_n\to 0$) regimes, respectively. 
In the sparse regime, the expected number of edges between any two nodes $i$ and $j$ vanishes, meaning $\mathbb{E}[A_{ij}\mid Z_i, Z_j]\to0$. In other words, the matrix $S$ does not depend on $n$ and the connectivity parameters vary with $n$ through $\rho_n$ and the degree parameters $\btheta$ only.
We further assume that $s_{\min}=\min_{a,b}S_{ab}>0$. 
While several choices of normalization constraints are possible to ensure identifiability, we adopt the constraint proposed by \citep{su2019strong}. Specifically, we impose that the sum of $\theta_i$ for all nodes $i$ in group $a$ equals the total number of nodes in this group (see \ref{def_contrainte} below). Under this constraint, the parameters depend on the community assignments, thus $\theta_i= \theta_i(\mathbf{Z}_n)$. This choice is particularly advantageous because setting $\theta_i = 1$ for all $i$ allows the model to naturally reduce to the classic Poisson Stochastic Block Model (SBM).

The model is thus characterized by the parameters $(k,\pi, \rho_n S, \boldsymbol{\theta})\in \N\times \Ss_k\times  [0,1]^{k \times k} \times\R^n_{\ge 0}$, where $\Ss_k$ is the $k$-simplex.

We denote by $[k]$ the set of integers $\{1,\dots, k\}$ and by either $\1_A$ or $\1\{A\}$  the indicator function of set $A$. 
The notation $\mathbb{E}(\cdot |\bz)$ (resp. $\mathbb{P}(\cdot |\bz)$) abbreviates the conditional expectation $\mathbb{E}(\cdot |\bZ=\bz)$ (resp. probability $\mathbb{P}(\cdot |\bZ=\bz)$).
To quantify the distance between matrices, we use the $1$-norm defined as for any matrix $M \in \mathbb{R}^{k \times k}$, 
\begin{equation*}
\|M\|_1 = \sum_{i=1}^k  \sum_{j=1}^k |M_{ij}|.
\end{equation*}
For completeness, we recall that the Kullback--Leibler divergence between two Poisson distributions $\mathcal{P}(\lambda)$ and $\mathcal{P}(\mu)$ is given by
\begin{equation}\label{def_KL_poiss} 
\text{KL}\left( \lambda \| \mu \right)=\lambda \log\left(\frac{\lambda}{\mu}\right) + \mu - \lambda.
\end{equation}

\subsection{Counters and log-likelihood modularity}
We now introduce the block-level statistics that summarize the graph structure under a given labeling. These quantities play a central role in expressing the likelihood in a tractable form.

For a given latent configuration $\mathbf{z}_n\in [k]^n$ and an observed graph $\mathbf{A}_{n\times n}$, we introduce the following counters for a DC-SBM with parameters $(k,\pi,\rho_n S, \boldsymbol{\theta})$
\begin{align*}
\forall a \in [k], \quad n_{a}(\mathbf{z}_{n}) &= \sum_{i=1}^{n} \1\{ z_{i}=a\}, \\
\forall a,b \in [k], \quad o_{a b}(\mathbf{z}_{n}, \mathbf{A}_{n \times n}) &= \sum_{1 \leq i, j \leq n} \1\left\{z_{i}=a, z_{j}=b\right\} A_{i j}.
\end{align*}
The quantity $n_a(\mathbf{z}_n)$ represents the size of block $a\in [k]$,
and the quantity $o_{ab}(\mathbf{z}_n,\mathbf{A}_{n\times n})$ denotes the number of edges between blocks $a$ and $b$ for $a \neq b$, whereas it represents twice the number of edges within block $a$ when $a = b$. To simplify the notation, we write $o_{ab}(\mathbf{z}_n)$ instead of $o_{ab}(\mathbf{z}_n,\mathbf{A}_{n\times n})$. The identifiability constraint on the $\boldsymbol{\theta}$ parameters writes
 \begin{equation}\label{def_contrainte}
\forall a \in [k], \quad \sum_{i=1}^n \theta_i \1\{z_i=a \} =n_a(\mathbf{z}_n),
 \end{equation}
 and we further note that $\sum_{i=1}^n \theta_i =n$.

Let $d_i=\sum_{j=1}^n A_{ij}$ denote the (observed) degree of vertex $i$. 
By further letting $D_a(\mathbf{z}_n)=\sum_{j=1}^n d_j\1\{z_j=a\}$ denote the total degree of vertices in block $a$, we get 
\begin{equation*}
D_{a}(\mathbf{z}_n)= \sum_{j=1}^n d_j \1\left\{z_{j}=a\right\} = \sum_{b=1}^k o_{ab}(\mathbf{z}_n).
\end{equation*}

The conditional likelihood of the adjacency matrix given the labels is given by 
\begin{align*}
\mathbb{P}(\mathbf{A}_{n \times n}\mid \mathbf{z}_n)= &\left\{ \prod_{1\le i < j\le n}\frac{(\rho_n\theta_i\theta_jS_{z_i z_j})^{A_{ij}}}{A_{ij}!}\exp(-\rho_n\theta_i\theta_jS_{z_i z_j}) \right\} \\
&\qquad \times \prod_{i=1}^n\frac{(\frac{\rho_n}{2}\theta_i^2S_{z_i z_i})^{A_{ii}/2}}{(A_{ii}/2)!}\exp\left(-\frac{\rho_n}{2} \theta_i^2S_{z_i z_i}\right),
\end{align*}
which can be rewritten as
\begin{align*}
\mathbb{P}(\mathbf{A}_{n \times n}\mid \mathbf{z}_n)=&\frac{1}{\left\{ \prod_{i < j}A_{ij}! \right\}\prod_{i} (A_{ii}/2)! 2^{A_{ii}/2}}\left(\prod_{i=1}^n\theta_i^{d_i}\right)\\
& \times \prod_{1\le a,b\le k}(\rho_nS_{ab})^{o_{ab}(\bz)/2}\exp\left(-\frac{\rho_n}{2} n_{a}(\mathbf{z}_{n}) n_{b}(\mathbf{z}_{n})S_{ab}\right).
\end{align*}
Now, the (conditional) maximum likelihood estimator of the parameter $(\btheta,\rho_n S)$, given the labels $\mathbf{z}_n$ is given by 
\begin{equation}\label{def_est}
\hat{\theta}_i=\sum_{a=1}^k \1_{\{z_i=a\}} \frac{n_a(\mathbf{z}_n)d_i}{D_a(\mathbf{z}_n)}= \frac{n_{z_i}(\mathbf{z}_n)d_i}{D_{z_i}(\mathbf{z}_n)}, \qquad \rho_n \widehat{ S}_{ab}=\frac{o_{ab}(\mathbf{z}_n)}{n_{a}(\mathbf{z}_{n}) n_{b}(\mathbf{z}_{n})},
\end{equation}
which satisfy the normalization constraint $ \sum_{i=1}^n \theta_i \1\{z_i=a \} =n_a(\mathbf{z}_n)$. Plugging these estimators back into the likelihood leads to the log-likelihood modularity criterion, denoted by $Q(\mathbf{z}_n)$. After simplification (see Lemma \ref{lemma_calcul_Q} in Appendix \ref{proof_calcul_Q}), this criterion can be expressed purely in terms of block-level sufficient statistics 
\[
Q(\mathbf{z}_n)
= \frac{1}{n^2} \sum_{1\le a,b\le k} o_{ab}(\mathbf{z}_n)
\log \left( \frac{o_{ab}(\mathbf{z}_n)}{D_a(\mathbf{z}_n) D_b(\mathbf{z}_n)} \right).
\]

\subsection{Estimator and population criterion}
The community estimator is defined as
\begin{equation} \label{def_z_chap}
\hat{\mathbf{z}}_n=\underset{\mathbf{z}_{n} \in \mathcal{F}(n, \alpha)}{\arg \max } \ Q(\mathbf{z}_{n}),
\end{equation}
where, for some $\alpha>0$,
\[
\mathcal{F}(n,\alpha) = \{\mathbf{z}_{n}\in [k]^n : n_a(\mathbf{z}_{n}) \geq \alpha n \text{ for all }a\in [k]\}.
\]

For two labelings $\mathbf{e}_n$ and $\mathbf{z}_n$ in $[k]^n$, we define the $\boldsymbol \theta$-weighted confusion matrix $R(\mathbf{e}_n,\mathbf{z}_n) \in \mathbb{R}^{k \times k}$ as 
\begin{equation*}
\forall 1 \le a,b\leq k, \quad R_{ab}(\mathbf{e}_n,\mathbf{z}_n) = \frac{1}{n}\sum_{i=1}^n \theta_i \1\{e_i=a,z_i=b\}.
\end{equation*}
The quantity $R_{ab}$ represents the total $\btheta$-mass of observations with class $b$ in $\mathbf{z}_n$ that are assigned to class $a$ by the labeling $\mathbf{e}_n$. In particular, when comparing the true labels $\mathbf{Z}_n$ with themselves, the matrix $R(\mathbf{Z}_n, \mathbf{Z}_n)$ is diagonal and equals 
\begin{equation*}
R(\mathbf{Z}_n, \mathbf{Z}_n) = \text{Diag}\left( \frac{n_1(\mathbf{Z}_n)}{n}, \dots, \frac{n_k(\mathbf{Z}_n)}{n} \right).
\end{equation*}
Observe that the row and column sums of $R=R(\mathbf{e}_n,\mathbf{z}_n)$ correspond to the cluster sizes 
\[
n_a(\mathbf{z}_n)=n [R(\mathbf{e}_n,\mathbf{z}_n)^\intercal \veco]_a, 
\]
where $\veco$ is the column vector of size $k$ with all entries equal to $1$. Furthermore, we notice that since the parameters $\theta_i$ depend on $\mathbf{z}_n$, for any alternative label assignment $\mathbf{e}_n$, we generally have:$$n_a(\mathbf{e}_n) \neq n [R(\mathbf{e}_n,\mathbf{z}_n)\veco]_a = \sum_{i=1}^n \theta_i(\mathbf{z}_n) \1\{e_i=a\}.$$ Moreover, $\|R\|_1= \sum_{a,b}R_{ab}=1$.

Using $\mathbb{E}[A_{ij}\mid Z_i,Z_j]=\rho_n\theta_i\theta_jS_{Z_iZ_j}$, we obtain
\begin{equation}\label{E_oab}
\mathbb{E}[o_{ab}(\mathbf{e}_n)\mid\mathbf{z}_n] = \sum_{1\le i,j\le n} \1\{e_i=a,e_j=b\} \rho_n\theta_i\theta_jS_{z_iz_j}
 = \rho_nn^2 [RSR^\intercal]_{ab} ,
\end{equation}
and similarly,
\begin{equation}\label{E_Da}
\mathbb{E}[D_a(\mathbf{e}_n)\mid\mathbf{z}_n] = \rho_nn^2 [RSR^\intercal \veco]_a,
\end{equation}
where $R=R(\mathbf{e}_{n},\mathbf{z}_{n})$. We then define the function $H_{S,n}$ by replacing the random quantities in $Q(\mathbf{e}_{n})$ with their conditional expectations given $\mathbf{Z}_{n}=\mathbf{z}_{n}$
\begin{align*}
H_{S,n}(R(\mathbf{e}_{n},\mathbf{z}_{n})) &= \frac{1}{n^2} \sum_{a,b} \mathbb{E}[o_{ab}(\mathbf{e}_n)\mid\mathbf{z}_n] \log \left( \frac{\mathbb{E}[o_{ab}(\mathbf{e}_n)\mid\mathbf{z}_n]} {\mathbb{E}[D_a(\mathbf{e}_n)\mid\mathbf{z}_n]\, \mathbb{E}[D_b(\mathbf{e}_n)\mid\mathbf{z}_n]} \right),\\
&= \sum_{1 \le a,b\le k}\rho_n [RSR^\intercal]_{ab} \log \left( \frac{[RSR^\intercal]_{ab}} {\rho_nn^2 [RSR^\intercal\veco]_a [RSR^\intercal\veco]_b} \right).
\end{align*}

\section{Consistency framework and main results}\label{sect_main_results}

In this section, we formalize the notions of weak and strong consistency for community detection in the DC-SBM, and state the main theoretical guarantees of the community estimator.
We first introduce the performance criteria used to evaluate label recovery, then present the asymptotic regimes under consideration, and finally state the main consistency results.

\subsection{Framework for consistency}
To measure the deviation between two labelings while accounting for label switching, we define the $\btheta$-weighted $\ell_0$ distance 
\begin{equation}\label{m_def}
m(\hat{\mathbf{z}}_n,\mathbf{Z}_n) = \min_\sigma \left\{ \sum_{i=1}^n \theta_i \1\{\hat{z}_i\neq\sigma(Z_i)\} \right\},
\end{equation}
where the minimum is taken over all permutations $\sigma$ of the set $[k]$. Note that
\begin{equation*}
\frac{m(\hat{\mathbf{z}}_n,\mathbf{Z}_n)}{n} = \min_\sigma \sum_{a\neq b} \left[ R(\sigma(\hat{\mathbf{z}}_n),\mathbf{Z}_n) \right]_{ab}.
\end{equation*}

We consider two standard notions of recovery quality: weak and strong consistency.

\begin{definition}[weak and strong consistency]
An estimator $\hat{\mathbf{z}}_n$ is said to be \textit{weakly consistent} if the proportion of misclassified nodes asymptotically vanishes, i.e. for all $\epsilon > 0$, 
$$\mathbb{P}\left(m(\hat{\mathbf{z}}_n,\mathbf{Z}_{n}) < \epsilon n\right) \xrightarrow[n \to \infty]{} 1.$$
Similarly, the estimator is \textit{strongly consistent} if the probability of recovering the exact true labeling asymptotically approaches one 
$$\mathbb{P}\left(m(\hat{\mathbf{z}}_n,\mathbf{Z}_{n}) = 0 \right) \xrightarrow[n \to \infty]{} 1.$$
\end{definition}

To quantify the degree of separation between community profiles, we rely on a Chernoff--Hellinger (CH) divergence, as proposed by \citep{Abbe_Sandon} 
$$
\forall b,b'\in [k], \quad \sup_{t\in [0,1]} \sum_{a=1}^k \pi_a \left(t S_{ab} + (1-t)S_{ab'} - S_{ab}^t S_{ab'}^{1-t}\right)=\sup_{t\in [0,1]} \sum_{a=1}^k \pi_a H_t(S_{ab'},S_{ab}),
$$
with $H_t(p\| q) =(1-t) p + t q - p^{1-t}q^{t}\,$.
The expression inside the summation corresponds to an $f$-divergence with convex function $f(x)=1-t+tx-x^t$. 
The thresholds for strong consistency are heavily governed by the minimum pairwise CH-divergence between communities, defined by the constant
\begin{equation}\label{def_Cpi}
C(\pi, S) \;=\;   \min_{b\neq b'}\; \sup_{t\in [0,1]}  \left[\sum_{a=1}^k \pi_a H_t(S_{ab},S_{ab'}) -t   KL([S\pi]_{b'}\|[S\pi]_{b}) \right]  \,.
\end{equation}

This constant $C(\pi, S)$ is one of the core contributions of this paper, as it encapsulates the exact statistical physics of community detection under degree correction.

\begin{lemma}\label{lemma_C_positif}
$C(\pi, S) >0$ when $S$ is full rank.
\end{lemma}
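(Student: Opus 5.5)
The plan is to fix an ordered pair $b\neq b'$ and study the one-variable function
\[
g(t)=\sum_{a=1}^k \pi_a H_t(S_{ab},S_{ab'}) - t\,\mathrm{KL}\bigl([S\pi]_{b'}\,\|\,[S\pi]_{b}\bigr),\qquad t\in[0,1].
\]
It suffices to show $\sup_{t\in[0,1]} g(t)>0$ for every ordered pair, because the minimum in \eqref{def_Cpi} runs over finitely many pairs. Three elementary facts drive the argument. First, $H_0(p\|q)=p-p=0$, so $g(0)=0$. Second, $t\mapsto p^{1-t}q^t$ is convex, so each $H_t$ is concave in $t$, and hence $g$ is concave. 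Third, differentiating $p^{1-t}q^t$ at $t=0$ gives $p\log(q/p)$, so
\[
\partial_t H_t(p\|q)\big|_{t=0}=q-p+p\log(p/q)=\mathrm{KL}(p\|q).
\]
For a concave $g$ with $g(0)=0$, positivity of the supremum is equivalent to $g'(0^+)>0$. The lemma is therefore reduced to the inequality
\[
\sum_{a=1}^k \pi_a\,\mathrm{KL}(S_{ab}\|S_{ab'}) \;>\; \mathrm{KL}\bigl([S\pi]_{b'}\|[S\pi]_{b}\bigr).
\]

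The natural tool for this is the joint convexity of the Poisson divergence $(\lambda,\mu)\mapsto \lambda\log(\lambda/\mu)+\mu-\lambda$, which is a perspective function. By symmetry of $S$, $[S\pi]_b=\sum_a\pi_aS_{ab}$. Jensen's inequality then gives
\[
\mathrm{KL}\bigl([S\pi]_{b}\|[S\pi]_{b'}\bigr)\le\sum_a\pi_a\mathrm{KL}(S_{ab}\|S_{ab'}).
\]
Equality holds only if the ratio $S_{ab}/S_{ab'}$ is constant in $a$, which would make columns $b$ and $b'$ proportional. Full rank of $S$ (together with $s_{\min}>0$, so all logarithms are finite) excludes this. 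The result would be strict positivity, with $C(\pi,S)$ bounded below by a strictly positive minimum over finitely many pairs.

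The main obstacle is the \emph{direction} of the KL term. Jensen controls $\mathrm{KL}([S\pi]_b\|[S\pi]_{b'})$, whereas \eqref{def_Cpi} contains $\mathrm{KL}([S\pi]_{b'}\|[S\pi]_b)$. I do not see how to bridge this, and in fact the reversed inequality appears to fail in general. Take $k=2$, $\pi=(1/2,1/2)$, $S=\begin{pmatrix}1&1\\1&L\end{pmatrix}$ with $L$ large, $b=1$, $b'=2$. Then the left side equals $\tfrac12(L-1-\log L)\sim L/2$, while the right side equals $\mathrm{KL}(\tfrac{1+L}{2}\|1)\sim \tfrac L2\log\tfrac L2$. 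So $g'(0)<0$, and by concavity $g\le 0$ on $[0,1]$. This $S$ is symmetric, full rank, and has $s_{\min}>0$.

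My first attempt would therefore be to recheck the derivation of \eqref{def_Cpi}, specifically which block's volume normalization enters. I expect the intended term to be $\mathrm{KL}([S\pi]_b\|[S\pi]_{b'})$, or equivalently $H_t(S_{ab'},S_{ab})$ paired with the current KL term. With that ordering, the concavity reduction and the joint-convexity argument above prove the lemma directly. If the ordering in \eqref{def_Cpi} is indeed correct, the lemma would need extra assumptions beyond full rank, and the example above would show that it is false as stated.
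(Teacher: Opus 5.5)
Your reading is right. With the literal ordering in \eqref{def_Cpi}, your example (entries $1,1,1,L$, e.g.\ $L=5$) does give $C(\pi,S)=0$. The paper's own proof in Appendix~\ref{proof_C_positif}, however, works with $f_{bb'}(t)=\sum_a \pi_a H_t(S_{ab'}\|S_{ab}) - t\,\mathrm{KL}([S\pi]_{b'}\|[S\pi]_b)$, which is exactly your corrected definition. The same ordering appears in the Notations list, the Chernoff--Hellinger display just before \eqref{def_Cpi}, and the proof of Theorem~\ref{thm_strong}, so \eqref{def_Cpi} is a typo.

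For that version your argument is the paper's. With $\lambda=S\pi$, one has $f(0)=0$ and $f'(0)=\sum_a\pi_a S_{ab'}\log(S_{ab'}/S_{ab})-\lambda_{b'}\log(\lambda_{b'}/\lambda_b)>0$ by the log-sum inequality. This is your joint-convexity step once the linear terms cancel, and strictness comes from columns $b,b'$ of $S$ not being proportional, using $\pi_a>0$ for all $a$. Your concavity-in-$t$ observation is not needed for the positive direction, but it is what makes your counterexample rigorous.
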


\begin{proof}
See the proof in Appendix \ref{proof_C_positif}
\end{proof}

\subsection{Consistency results}
We now state the main theoretical guarantees for the labels estimator.

\begin{theorem} [weak consistency
] \label{thm_weak}
Assume that $(\mathbf{Z}_n,\mathbf{A}_{n \times n})$ is generated from a DC-SBM with parameters $(k,\pi, \rho_n S, \boldsymbol{\theta})$ and that $0 <\alpha < \min_{a} \pi_a$. If $\rho_n$ satisfies either $\rho_n = 1$ (dense regime) or $\rho_n \to 0$ with  $n\rho_n \to \infty$  as $n \to \infty$ (sparse regime), then the estimator $\hat{\mathbf{z}}_n$ defined in \eqref{def_z_chap} is weakly consistent.
\end{theorem}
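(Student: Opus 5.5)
We follow the classical route of \citet{zhao2012consistency} and \citet{bickel2009nonparametric}: compare the random criterion $Q$ with its population counterpart $H_{S,n}$, and exploit a well-separated maximum of $H_{S,n}$ at the true confusion matrix. The first reduction is to remove the $\log(\rho_n n^2)$ scaling. Since $\sum_{a,b}[RSR^\intercal]_{ab}=\veco^\intercal S\veco$-type quantities do not depend on the labeling (indeed $\sum_{a,b}o_{ab}(\mathbf{e}_n)=\sum_i d_i$ for every $\mathbf{e}_n$), we may write $H_{S,n}(R)=\rho_n F(R)-\rho_n c\log(\rho_n n^2)$. Here
\[
F(R)=\sum_{a,b}[RSR^\intercal]_{ab}\log\frac{[RSR^\intercal]_{ab}}{[RSR^\intercal\veco]_a[RSR^\intercal\veco]_b},
\]
and $c=\veco^\intercal RSR^\intercal\veco$. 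This constant $c=\sum_{i,j}\theta_i\theta_j S_{z_iz_j}/n^2$ depends only on $\mathbf{z}_n$, not on $\mathbf{e}_n$. The same constant shift applies to $Q$ after centering, so maximizing $Q$ amounts to maximizing $Q/\rho_n$ against $F(R)$.

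\textbf{Step 1 (population identifiability).} We show that $F$, viewed on the compact set of nonnegative $k\times k$ matrices with $R^\intercal\veco=\pi$-like column sums and mass one, is maximized exactly at $R=\tDiag(R^\intercal\veco)$ up to column/row permutations. This follows from the Jensen/log-sum argument of \citet{zhao2012consistency}. $F$ is a mutual-information-type quantity, equal to $\sum \mu_{ab}\log(\mu_{ab}/(\mu_{a\cdot}\mu_{\cdot b}))$ for the measure $\mu=RSR^\intercal$ with total mass $c$. The merging/data-processing inequality shows that $F(R)\le F(\Diag)$, with equality only when each row of $R$ has a single nonzero entry, mapping distinct columns to distinct rows. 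Full rank of $S$ is used here: it guarantees that distinct columns of $S$ are not proportional, so merging or splitting blocks strictly decreases $F$. By continuity and compactness we then get the quantitative statement. For every $\epsilon>0$ there is $\delta>0$ such that, whenever $\min_\sigma\sum_{a\ne b}[R(\sigma(\mathbf{e}_n),\mathbf{z}_n)]_{ab}\ge\epsilon$, we have $F(R)\le F(\Diag)-\delta$. To make this uniform in $n$, we restrict to $\mathbf{z}_n$ with $|n_a(\mathbf{z}_n)/n-\pi_a|$ small, which holds with probability tending to one by the law of large numbers. We also use $\alpha<\min_a\pi_a$ so that the truth lies in $\mathcal{F}(n,\alpha)$.

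\textbf{Step 2 (uniform concentration).} We prove
\[
\max_{\mathbf{e}_n\in[k]^n}\max_{a,b}\frac{|o_{ab}(\mathbf{e}_n)-\mathbb{E}[o_{ab}(\mathbf{e}_n)\mid\mathbf{z}_n]|}{\rho_n n^2}\to0
\]
in probability, and likewise for $D_a$ (which is a sum of $o_{ab}$). For fixed $\mathbf{e}_n$, $o_{ab}$ is a linear combination with coefficients in $\{0,1,2\}$ of independent Poisson variables with total mean at most $s_{\max}\rho_n n^2\max$-type bounds. A Bernstein bound for Poisson sums gives a tail of order $\exp(-c\,\eta^2\rho_n n^2)$ at deviation $\eta\rho_n n^2$. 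A union bound over the $k^n$ labelings costs $e^{n\log k}$, which is negligible because $\rho_n n^2\gg n$ exactly when $n\rho_n\to\infty$. This is where the sparsity assumption enters.

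\textbf{Step 3 (conclusion and main obstacle).} The function $(x_{ab})\mapsto\sum x_{ab}\log(x_{ab}/(x_{a\cdot}x_{\cdot b}))$ is uniformly continuous on the relevant compact set, since $x\log x$ is uniformly continuous near $0$. Combined with Step 2, this gives $\sup_{\mathbf{e}_n}|Q(\mathbf{e}_n)/\rho_n-\text{const}-F(R(\mathbf{e}_n,\mathbf{z}_n))|\to0$ in probability. The denominators $D_a(\mathbf{e}_n)$ stay bounded away from zero on $\mathcal{F}(n,\alpha)$ after normalization by $\rho_n n^2$; this is where $s_{\min}>0$ and the $\btheta$ constraint are used. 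Since $Q(\hat{\mathbf{z}}_n)\ge Q(\mathbf{Z}_n)$, we get $F(R(\hat{\mathbf{z}}_n,\mathbf{Z}_n))\ge F(\Diag)-o_P(1)$, hence $m(\hat{\mathbf{z}}_n,\mathbf{Z}_n)/n<\epsilon$ with probability tending to one.

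I expect the main obstacle to be the degree heterogeneity. With unbounded $\theta_i$, $R$ has $\btheta$-weighted entries and the variance bounds in Step 2 involve $\sum\theta_i^2\theta_j^2$. I would first assume $\max_i\theta_i$ is bounded (or $\max\theta_i^2/n\rho_n\to0$) so that Bernstein's inequality applies. A second difficulty is that in Step 1 $n[R\veco]_a\ne n_a(\mathbf{e}_n)$, so the compact domain must be described through the column sums of $R$ only. Strict maximality must then be proven over all row-stochastic-type merges, not over balanced ones.
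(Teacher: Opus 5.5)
Your proposal is correct and follows the paper's proof essentially step for step. It uses the mutual-information/data-processing identifiability of Lemma~\ref{lemma7.2}, and Chernoff bounds plus a union bound over the $k^n$ labelings as in Theorem~\ref{thm_7.4}, where $n\rho_n\to\infty$ is exactly what beats the $e^{n\log k}$ cost. It then transfers from $Q$ to the population criterion as in Theorem~\ref{theorem7.6} and closes with a compactness gap away from permuted diagonal matrices.

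Two of your side remarks should be dropped.

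\emph{No bound on $\max_i\theta_i$ is needed.} For $a\neq b$, $o_{ab}(\mathbf{e}_n)$ is exactly a Poisson variable, and $o_{aa}(\mathbf{e}_n)$ is twice one. Its mean is at most $s_{\max}\rho_n(\sum_i\theta_i)^2=s_{\max}\rho_n n^2$. The Poisson moment generating function therefore gives your tail $\exp(-c\eta^2\rho_n n^2)$ with no dependence on the individual $\theta_i$. A Poisson variance equals its mean; it is not a sum of terms $\theta_i^2\theta_j^2$.

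\emph{The denominators need not stay away from zero.} The constraint \eqref{def_contrainte} only fixes the $\btheta$-mass of the true blocks. So $\sum_i\theta_i\1\{e_i=a\}$ can be much smaller than $n_a(\mathbf{e}_n)\ge\alpha n$, and $D_a(\mathbf{e}_n)/(\rho_n n^2)$ need not be bounded away from zero on $\mathcal{F}(n,\alpha)$. You do not need that bound, however. Write the criterion as $\sum_{a,b}f(x_{ab})-2\sum_a f(x_{a\cdot})$ with $f(x)=x\log x$, which is uniformly continuous on $[0,M]$. This handles vanishing rows of $R$ both in the transfer step and in the continuity of the population gap. That makes your route more robust than the paper's, whose transfer step relies on the lower bounds of Corollary~\ref{cor_7.5} and the mean value theorem.
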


This result ensures that the fraction of misclassified nodes vanishes asymptotically, even in sparse regimes, provided that the graph density, governed by $n\rho_n$, is such that the expected number of edges per node grows to infinity. The proof of Theorem \ref{thm_weak} can be found in Section \ref{sec_proof_weak}.

We now state our primary result: the sharp phase transition for strong consistency (exact recovery).
\begin{theorem} [strong consistency and phase transition
] \label{thm_strong}
Assume that $(\mathbf{Z}_n,\mathbf{A}_{n \times n})$ is generated from a DC-SBM with parameters $(k,\pi, \rho_n S, \boldsymbol{\theta})$ and that $\min_{a} \pi_a\ge \alpha>0$. If $\rho_n = 1$ or $\rho_n \to 0$ such that $\rho_n \gg \log n /n$ as $n \to \infty$, then the estimator $\hat{\mathbf{z}}_n$ defined in \eqref{def_z_chap} achieves exact recovery
$$\mathbb{P}\left(m(\hat{\mathbf{z}}_n,\mathbf{z}_{n})=0 \right) \xrightarrow[n \to \infty]{} 1.$$
Furthermore, this strong consistency result also holds in the boundary regime where $\rho_n = \log n/n$, provided that the community separation condition $C(\pi,S) > 1$ is satisfied.
\end{theorem}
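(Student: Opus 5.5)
We prove exact recovery in two stages: a global reduction via weak consistency, then a local analysis of labelings close to the truth. By Theorem~\ref{thm_weak} (whose hypotheses hold in both regimes of Theorem~\ref{thm_strong}), with probability tending to one $\hat{\mathbf{z}}_n$ lies, up to a permutation $\sigma$, in a small $\btheta$-weighted neighbourhood $\{\mathbf{e}_n : m(\mathbf{e}_n,\mathbf{Z}_n)\le \epsilon_n n\}$ with $\epsilon_n\to0$. It then suffices to show that, uniformly over this neighbourhood minus the true labeling, $Q(\mathbf{e}_n)<Q(\mathbf{Z}_n)$ with probability tending to one. For the dense and moderately sparse regimes ($\rho_n\gg \log n/n$), we write
\[
Q(\mathbf{e}_n)-Q(\mathbf{Z}_n)=\big[H_{S,n}(R(\mathbf{e}_n,\mathbf{Z}_n))-H_{S,n}(R(\mathbf{Z}_n,\mathbf{Z}_n))\big]+\Delta_n(\mathbf{e}_n).
\]
We then analyse the population term by a first-order expansion of $H_{S,n}$ around the diagonal $R_0=\tDiag(n_a(\mathbf{Z}_n)/n)$. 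The goal is the linear drift bound $H_{S,n}(R)-H_{S,n}(R_0)\le -c\,\rho_n \|R-R_0\|_{1,\text{off}}$. This uses that $S$ is full rank, hence rows are distinct after normalisation, and that $\min_{ab}S_{ab}>0$ keeps logarithms bounded. The fluctuation $\Delta_n$ is controlled by Bernstein bounds on $o_{ab}(\mathbf{e}_n)-o_{ab}(\mathbf{Z}_n)$, which only involve edges incident to the $\|\mathbf{e}_n-\mathbf{Z}_n\|_0$ misclassified nodes. Combined with a union bound over the $\binom{n}{m}k^m$ labelings at Hamming distance $m$, this gives deviations of order $\rho_n m \sqrt{\log n/(n\rho_n)}$ in $\btheta$-mass. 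These are dominated by the drift once $n\rho_n/\log n\to\infty$. Here we must use $\theta_{\min}$ carefully, so that the least connected nodes still carry enough signal.

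At the boundary $\rho_n=\log n/n$, the concentration above is too weak, so we work node by node. We first reduce to single-node moves: if the local drift argument shows that any improving labeling near the truth must contain a node $i$ whose individual switch $b\to b'$ increases $Q$, it suffices to bound $\mathbb{P}(\exists i,\ \text{switching } i \text{ to } b' \text{ increases } Q)$. For a single switch, we linearise $Q$. The change is, up to $o(\log n/n^2)$ terms coming from the plug-in estimators $\hat{\theta}$, $\widehat S$ and the block degrees $D_a$, the log-likelihood ratio
\[
\sum_a \Big( e_a(i)\log\tfrac{S_{ab'}}{S_{ab}}\Big)-d_i\log\tfrac{[S\pi]_{b'}}{[S\pi]_{b}},
\]
where $e_a(i)$ counts the edges from $i$ to block $a$, which are approximately Poisson with mean $\theta_i\log n\,\pi_a S_{ab}$. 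The second term is the degree-normalisation correction; it is exactly where the KL term of \eqref{def_Cpi} appears.

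A Chernoff bound with parameter $t$ then gives
\[
\mathbb{P}(\text{switch improves})\le \exp\Big(-\theta_i\log n\big[\textstyle\sum_a\pi_aH_t(S_{ab},S_{ab'})-t\,\mathrm{KL}([S\pi]_{b'}\|[S\pi]_b)\big]\Big).
\]
Optimising in $t$ yields $n^{-\theta_i C(\pi,S)}$, and the union bound over $n$ nodes and $k-1$ targets gives $n^{1-C(\pi,S)}\to0$, provided the $\theta_i$ in this bound are effectively bounded below by one. We expect the reconciliation with the $\btheta$ weighting to be the \textbf{main obstacle}. The plan is to handle low-$\theta$ nodes by noting that they contribute little $\btheta$-mass, and to absorb them through the weighted distance $m$ and the normalisation $\sum\theta_i=n$. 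We also need to show that the errors from the estimated $\hat\theta_i$ and $D_a$ are uniformly $o(\log n)$ on the scale of a single node's degree, via uniform concentration of the $D_a(\mathbf{e}_n)$ over the weak-consistency neighbourhood. Lemma~\ref{lemma_C_positif} guarantees that the exponent is nontrivial, and the assumption $C(\pi,S)>1$ closes the union bound.
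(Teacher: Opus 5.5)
Your first stage and your drift-plus-concentration argument for $\rho_n\gg\log n/n$ follow the paper's structure: Theorem~\ref{thm_weak}, then the linear drift of Lemma~\ref{lemma7.3}, then concentration and a union bound. The boundary argument, however, fails at exactly the point you flag, and the repair you propose cannot work. Exact recovery means $m(\hat{\mathbf z}_n,\mathbf Z_n)=0$. Since every $\theta_i>0$, a single misclassified node makes $m>0$ however small its $\theta_i$, so ``low-$\theta$ nodes carry little $\btheta$-mass'' is a weak-consistency argument and does not help here. Your per-node bound leads to $\sum_i n^{-\theta_i C(\pi,S)}$. Under the normalisation $\sum_{i:z_i=a}\theta_i=n_a$, requiring $\theta_i\ge 1$ for all $i$ forces all $\theta_i=1$.

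The obstruction is intrinsic, not technical. Take $k=2$, $\pi=(1/2,1/2)$, $S_{11}=S_{22}=3$, $S_{12}=\varepsilon$ small, $\rho_n=\log n/n$, and $\theta_i\in\{1/2,3/2\}$ in equal numbers within each block. Then $C(\pi,S)=\tfrac12(\sqrt3-\sqrt\varepsilon)^2>1$. Yet a node with $\theta_i=1/2$ has expected degree about $\tfrac{3+\varepsilon}{4}\log n$, so about $n^{(1-\varepsilon)/4}$ such nodes are isolated, and $Q$ does not depend on their labels. The same happens for $\rho_n\gg\log n/n$: a node with $\theta_i n\rho_n\to 0$ is isolated with high probability. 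Your Bernstein step also compares an entropy indexed by the Hamming count with a drift proportional to $\btheta$-mass, which needs $\theta_{\min}$ bounded below. So a hypothesis such as $\theta_{\min}C(\pi,S)>1$ at $\rho_n=\log n/n$, and $\theta_{\min}n\rho_n/\log n\to\infty$ above it, has to be added; it cannot be argued away. The paper's proof does not get around this either. Its final union bound counts the labelings at $\btheta$-weighted distance $m$ as $\binom nm(k-1)^m$, i.e.\ it identifies $\btheta$-mass with Hamming distance, which is exact only when all $\theta_i=1$.

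Separately from $\btheta$, your boundary argument rests on a reduction you only announce, and it is not the paper's route. It does not follow as stated that a labeling near the truth with $Q(\mathbf e_n)\ge Q(\mathbf Z_n)$ contains an individually improving switch. $Q$ is not additive over the misclassified set $M$: edges inside $M$, and the second-order terms of $x\log x$ in $o_{ab}(\mathbf e_n)$ and $D_a(\mathbf e_n)$, couple the nodes. You can only conclude that some $i\in M$ has a switch changing $n^2Q$ by at least $-O(\delta\log n)$. Making this rigorous needs a per-node bound with slack and a uniform bound on the number of edges spanned by all sets of size at most $\delta n$.

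Moreover, the Poisson moment generating function of your statistic gives the exponent $\theta_i\log n\,\sup_t\{[S\pi]_b-([S\pi]_b/[S\pi]_{b'})^t\sum_a\pi_aS_{ab}^{1-t}S_{ab'}^t\}$. This differs from the bracket in \eqref{def_Cpi} whenever $[S\pi]_b\neq[S\pi]_{b'}$, so identifying it with $C(\pi,S)$ needs a separate argument.

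The paper never reduces to single nodes. It applies Theorem~\ref{thm7.10} to every labeling at distance $m\in[1,\delta n]$, combining the drift $\rho_n\widetilde C(R,S)$ of Lemma~\ref{lemma7.3} with the Chernoff bound of Proposition~\ref{prop7.9} to get \eqref{bound_P}. It then sums against $(en(k-1)/m)^m$, so one computation covers both regimes. What your node-wise statistic buys is that the degree-normalisation term enters the Chernoff bound directly. Proposition~\ref{prop7.9} only bounds that contribution by $\tilde D\rho_n nm$, which is of the same order as the exponent in \eqref{bound_P}.
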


The proof of Theorem \ref{thm_strong} can be found in Section \ref{sec_proof_strong}.
The remainder of the article is devoted to the proof of these results. We begin with weak consistency.

\begin{remark}
It is highly instructive to compare these theoretical guarantees for the DC-SBM with the optimal results recently established for the standard binary SBM \citep{CL24}. For the classical SBM with unknown parameters, both the maximum conditional likelihood and the integrated conditional likelihood estimators achieve weak consistency under the exact same necessary sparsity regime, requiring $n\rho_n \to \infty$ as $n \to \infty$. Our Theorem \ref{thm_weak} confirms that introducing degree-correction parameters into the model and estimating them via profile likelihood does not alter the fundamental sample complexity required for partial recovery. 
Regarding strong consistency, \citep{CL24} proved in Theorem 3.2 that the optimal threshold for exact recovery in the standard SBM under the logarithmic degree regime ($\rho_n = \log n / n$) is governed by a closely related community separation constant, say 
$C_{\text{SBM}}(\pi, S)=  \min_{b\neq b'}\; \sup_{t\in [0,1]} \sum_{a=1}^k \pi_a H_t(S_{ab},S_{ab'}).$
Specifically, maximum likelihood in the standard SBM achieves exact recovery at the phase transition threshold $C_{\text{SBM}}(\pi, S) > 1$. Our Theorem \ref{thm_strong} demonstrates that the profile maximum likelihood estimator for the DC-SBM exhibits a parallel phase transition, but governed by our modified constant $C(\pi, S) > 1$ with $C(\pi, S)$ defined in \eqref{def_Cpi}. 
Furthermore, while previous works on the DC-SBM \citep[e.g.,][]{zhao2012consistency} required dense graphs and restrictive structural assumptions on the block matrix $S$, our estimator achieves this exact recovery down to the sharp logarithmic sparsity threshold $n \rho_n = O(\log n)$ under the minimal identifiability condition that $S$ is full rank.
This sharp phase transition complements the information-theoretic limits established for the binary DC-SBM under minimax frameworks \citep{gao2018community}.
The crucial distinction lies in the definitions of these constants: our $C(\pi, S)$ incorporates an additional penalty term compared to $C_{\text{SBM}}(\pi, S)$, namely the Kullback--Leibler divergence between Poisson distributions with parameter the expected degrees of the blocks. Consequently, the statistical cost of accommodating arbitrary degree heterogeneity is not negligible, but is instead explicitly quantified by this penalty. The DC-SBM requires a strictly stronger signal for exact community recovery than the homogeneous SBM, effectively reflecting the information-theoretic price of estimating the node-specific nuisance parameters. Driven by the structural consistency of this modified constant with the standard homogeneous limits, we conjecture that the condition $C(\pi, S) = 1$ forms the fundamental information-theoretic boundary for the DC-SBM, below which no algorithm can achieve exact recovery. 
\end{remark}

\begin{remark}
It is worth noting that if we consider the framework of a Poisson SBM with $K$ groups (i.e., a DC-SBM where $\theta_i=1$ for all $i$), which corresponds to an integer-valued SBM with uniform group proportions where the interaction between two nodes in the same block (resp. different blocks) is a Poisson-distributed random integer with mean $\lambda$ (resp. $\mu$), the strong consistency condition $C(\pi,S) > 1$ (with $C(\pi,S)$ defined in \eqref{def_Cpi}) simplifies to $(\sqrt{\lambda} - \sqrt{\mu})^2 > K$. This perfectly recovers the threshold established by \citep{ADL}, who extended the foundational results of \citep{XJL}.
\end{remark}

\subsection{Notations}

In this section we list all the notations that are used in this article.

$$\text{KL}\left( \lambda \| \mu \right)=\lambda \log\left(\frac{\lambda}{\mu}\right) + \mu - \lambda$$
$$K_t(p \| q) = p^t q^{1-t} - t q \log(p/q) - q$$
$$H_t(p\| q) = tKL(p\|q) -K_t(q\|p)=(1-t) p + t q - p^{1-t}q^{t}\,$$
$$C_t(R,S) = \sum_{a}\sum_{b\neq b'} [R^\intercal \mathbf 1]_a  K_t(S_{ab} \| S_{ab'})R_{bb'}   $$
$$C(\pi, S) \;=\;   \min_{b\neq b'}\; \sup_{t\in [0,1]}  \left[ \sum_a \pi_{a} H_t(S_{ab'} \| S_{ab}) -t   KL([S\pi]_{b'}\|[S\pi]_{b}) \right]  \,$$
$$\widetilde C(R, S) = 2 \sum_{b\neq b'} \left( \sum_{a}[R^\intercal \veco]_{a} KL(S_{ab'}\| S_{ab}) -  KL([SR^\intercal \veco]_{b'}\|[SR^\intercal \veco]_{b}) - \epsilon\ \right)R_{bb'} \,$$

\section{Weak consistency results}\label{sect_aux_weak}

This section collects auxiliary results required for the proof of Theorem \ref{thm_weak} and the proof of this theorem. We first recall structural properties of the confusion matrix and its associated functionals. We then establish concentration inequalities for the empirical quantities involved in the likelihood criterion.

\subsection{Properties of the confusion matrix and its functionals}

The following lemma provides a convenient characterization of the weighted misclassification error in terms of the confusion matrix $R(\mathbf{e}_n,\mathbf{z}_n)$, which will be useful for comparing estimated and true labels.

\begin{lemma}\label{lemma7.1} 
For every $\mathbf{e}_{n},\mathbf{z}_{n} \in[k]^n$ we have that 
\begin{equation*}
\frac{1}{n}\sum_{i=1}^n \theta_i \1_{\{e_i\neq z_i\}}= \frac{1}{2}\| \tDiag(R(\mathbf{e}_{n},\mathbf{z}_{n})^\intercal\veco) - R(\mathbf{e}_{n},\mathbf{z}_{n})\|_1 = \frac{1}{2}\|R(\mathbf{z}_{n},\mathbf{z}_{n}) - R(\mathbf{e}_{n},\mathbf{z}_{n})\|_1 .
\end{equation*}
\end{lemma}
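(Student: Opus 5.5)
This is a direct computation with the entries of $R=R(\mathbf{e}_n,\mathbf{z}_n)$, for which all entries are nonnegative because $\theta_i>0$. We first identify the diagonal matrix $\tDiag(R^\intercal\veco)$. Its $b$-th diagonal entry is the $b$-th column sum
\[
[R^\intercal\veco]_b=\sum_a R_{ab}=\frac1n\sum_{i=1}^n\theta_i\1\{z_i=b\}.
\]
This is exactly $R_{bb}(\mathbf{z}_n,\mathbf{z}_n)$, since $R(\mathbf{z}_n,\mathbf{z}_n)$ is diagonal with these entries. By the constraint \eqref{def_contrainte}, the entry also equals $n_b(\mathbf{z}_n)/n$, although the constraint is not even needed here. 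This immediately gives the second equality $\tDiag(R^\intercal\veco)=R(\mathbf{z}_n,\mathbf{z}_n)$.

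For the first equality, we expand the $1$-norm entrywise and split into diagonal and off-diagonal entries. For $a\neq b$, the $(a,b)$ entry of $\tDiag(R^\intercal\veco)-R$ is $-R_{ab}$, so its absolute value is $R_{ab}$. For $a=b$, the entry is
\[
\sum_{a'}R_{a'b}-R_{bb}=\sum_{a'\neq b}R_{a'b}\ge 0,
\]
so its absolute value is the entry itself. Summing over all entries gives
\[
\|\tDiag(R^\intercal\veco)-R\|_1=\sum_{a\neq b}R_{ab}+\sum_b\sum_{a\neq b}R_{ab}=2\sum_{a\neq b}R_{ab}.
\]

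It then remains to note that
\[
\sum_{a\neq b}R_{ab}=\frac1n\sum_i\theta_i\sum_{a\neq b}\1\{e_i=a,z_i=b\}=\frac1n\sum_i\theta_i\1\{e_i\neq z_i\}.
\]
Dividing the previous display by $2$ then yields the claim. There is no genuine obstacle; the only point requiring care is the sign of the diagonal entries, which relies on the nonnegativity of the $\theta_i$.
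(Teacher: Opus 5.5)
Your proof is correct and follows essentially the paper's route. Both arguments split $\|\tDiag(R^\intercal\veco)-R\|_1$ into diagonal and off-diagonal parts, use that the diagonal differences are nonnegative and carry the same total mass as the off-diagonal entries, and identify $\sum_{a\neq b}R_{ab}$ with the weighted misclassification. The only difference is that the paper goes through $1-\sum_a R_{aa}=\sum_a\bigl(n_a(\mathbf z_n)/n-R_{aa}\bigr)$, which uses the normalization \eqref{def_contrainte} and $\sum_i\theta_i=n$. Your direct column-sum computation shows, as you note, that the identity holds for any nonnegative weights.
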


\begin{proof}
The proof can be found in Section \ref{proof_lemma7.1}.
\end{proof}

The following lemma shows that, after rescaling, $ H_{S,n}(R)$ is strictly smaller than its diagonal counterpart unless $R$ is a diagonal matrix up to permutation.

\begin{lemma}\label{lemma7.2}
For any fixed positive full rank matrix $S$, any $\rho_n > 0$ and for all matrices $R$ such that $\|R\|_1=1$ and $[R^\intercal\veco]_a > 0$ for all $ a \in [k]$, we have 
\begin{equation*}
\frac{1}{\rho_n} \biggl(  H_{S,n}(\tDiag(R^\intercal\veco)) -  H_{S,n}(R)\biggr) = G_{S}(\tDiag(R^\intercal\veco)) - G_{S}(R) \geq 0,
\end{equation*}
where
\begin{equation}\label{def_G_S}
G_{S}(R) = \sum_{1\le a,b\le k} [R SR^\intercal]_{ab} \log \left(\frac{[R SR^\intercal]_{ab}}{ [R SR^\intercal \veco]_a [R SR^\intercal \veco]_b}\right).
\end{equation}
Furthermore, the inequality is strict unless $R$ is a diagonal matrix, up to permutation.
\end{lemma}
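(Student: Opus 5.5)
The plan is to separate the $\rho_n$-dependence from the rest, and then to read the inequality as a data-processing inequality for a mutual-information-type functional.

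\textbf{Step 1 (the identity).} Write $u=R^\intercal\veco$ and $P=RSR^\intercal$. Expanding the logarithm in $H_{S,n}(R)$ gives
\[
\frac1{\rho_n}H_{S,n}(R)=G_S(R)-\log(\rho_n n^2)\sum_{a,b}P_{ab}.
\]
The total mass is $\sum_{a,b}P_{ab}=\veco^\intercal RSR^\intercal\veco=u^\intercal Su$. For $\tDiag(u)$ we have $\tDiag(u)^\intercal\veco=u$, so its total mass is also $u^\intercal S u=:T$. Hence the $\log(\rho_n n^2)$ terms cancel in the difference, which gives the claimed equality.

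\textbf{Step 2 (reduction to mutual information).} Normalize $p_{ab}=P_{ab}/T$, with marginals $p_a=[P\veco]_a/T$. Then $G_S(R)=T\bigl(I(p)-\log T\bigr)$, where
\[
I(p)=\sum_{a,b}p_{ab}\log\frac{p_{ab}}{p_ap_b}
\]
is the mutual information of the joint law $p$. Similarly, $G_S(\tDiag(u))=T\bigl(I(q)-\log T\bigr)$ with $q_{cd}=u_cS_{cd}u_d/T$. Since $T$ is the same on both sides, it suffices to show $I(p)\le I(q)$.

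Define the kernel $K(a\mid c)=R_{ac}/u_c$. It is well defined because $u_c>0$, and it is stochastic in $a$. Then
\[
p_{ab}=\sum_{c,d}K(a\mid c)K(b\mid d)\,q_{cd},
\]
so $p$ is the law of $(A,B)$ obtained by passing $C$ and $D$ independently through $K$, where $(C,D)\sim q$. The data-processing inequality gives $I(p)\le I(q)$.

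To keep the equality case transparent, I would prove this directly with the log-sum inequality. For each $(a,b)$,
\[
p_{ab}\log\frac{p_{ab}}{p_ap_b}\le\sum_{c,d}K(a\mid c)K(b\mid d)\,q_{cd}\log\frac{q_{cd}}{q_cq_d},
\]
using that $p_ap_b=\sum_{c,d}K(a\mid c)K(b\mid d)q_cq_d$. Summing over $(a,b)$ and using stochasticity of $K$ gives the bound.

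\textbf{Step 3 (strictness).} This is the main point. Equality in the log-sum inequality at $(a,b)$ forces $q_{cd}/(q_cq_d)$ to be constant over all $(c,d)$ with $K(a\mid c)K(b\mid d)>0$. Note that $q_{cd}/(q_cq_d)=T\,S_{cd}/([Su]_c[Su]_d)$.

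Suppose two distinct columns $c\neq c'$ feed the same row $a$, that is, $R_{ac},R_{ac'}>0$. Every $d$ has some $b$ with $K(b\mid d)>0$, since $u_d>0$. So equality for all pairs $(a,b)$ forces $S_{cd}/[Su]_c=S_{c'd}/[Su]_{c'}$ for every $d$. Rows $c$ and $c'$ of $S$ are then proportional, contradicting full rank.

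Hence each row $a$ of $R$ has at most one nonzero entry. Each of the $k$ columns has at least one nonzero entry, and the nonzero entries of different columns lie in different rows. With $k$ rows and $k$ columns, each column is therefore supported on exactly one row, and this column-to-row map is a bijection. So $R$ is diagonal up to permutation.

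Conversely, for such $R$ one has $G_S(R)=G_S(\tDiag(u))$, since relabeling rows permutes the entries of $RSR^\intercal$. The expected obstacle is exactly this equality analysis. The points to handle carefully are rows of $R$ with zero mass (harmless, since they contribute nothing) and the use of $s_{\min}>0$, which ensures all the ratios above are finite and positive.
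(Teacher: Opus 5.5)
Your proposal is correct and follows essentially the paper's route. The $\log(\rho_n n^2)$ terms cancel because both matrices have total mass $u^\intercal S u$, and the difference becomes a gap of mutual informations. The data-processing inequality through the column-normalized kernel $K(a\mid c)=R_{ac}/u_c$ (the paper's $\tilde R$) then gives the bound, with strictness coming from two rows of $S$ being forced proportional. The only difference is that you prove the DPI and its equality case in one step via the log-sum inequality for each $(a,b)$. The paper instead chains two DPIs ($V\to U\to X$, then $X\to V\to Y$) and analyzes $I(V;U\mid X)=0$, so your equality analysis is somewhat more self-contained.
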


\begin{proof}
The proof can be found in Section~\ref{proof_lemma7.2}.
\end{proof}

\subsection{Concentration inequalities}

The following result shows that the empirical edge and degree counts concentrate around their expectations under the DC-SBM, uniformly for all labelings $\mathbf{e}_n$, with high probability as $n \to \infty$.

\begin{theorem}\label{thm_7.4}
Let $(\mathbf{Z}_n, \mathbf{A})$ be a graph generated from a DC-SBM with parameters $(k, \pi, \rho_n S, \boldsymbol{\theta})$. Assume $\rho_n = 1$ or $\rho_n \to 0$ such that $n\rho_n \to \infty$ as $n \to \infty$. 
Then, for all $a, b \in [k]$ and all $\mathbf{e}_n \in [k]^n$, by denoting $R = R(\mathbf{e}_n, \mathbf{Z}_n)$, the following inequalities hold simultaneously eventually almost surely as $n \to \infty$ 
\begin{equation*}
\left| \frac{o_{ab}(\mathbf{e}_n)}{n^2} - \rho_n[RSR^\intercal]_{ab} \right| < \sqrt{\frac{16 s_{\max} \rho_n \log k}{n^2}},
\end{equation*}
and
\begin{equation*}
\left| \frac{D_a(\mathbf{e}_n)}{ n^2} -\rho_n [RSR^\intercal \veco]_a \right| < k \sqrt{\frac{16 s_{\max} \rho_n \log k}{n^2}},
\end{equation*}
where $s_{\max}$ is the maximum entry of $S$.
\end{theorem}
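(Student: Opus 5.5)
The plan is a Bernstein inequality for a fixed labeling, followed by a union bound over labelings and Borel--Cantelli. At the end I flag a step where I expect this plan to fail at the stated radius.

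\textbf{Step 1 (reduction to one labeling and one pair).} Fix $\mathbf{z}_n$ and work conditionally on $\mathbf{Z}_n=\mathbf{z}_n$. Fix $\mathbf{e}_n\in[k]^n$ and $a,b\in[k]$, and write $R=R(\mathbf{e}_n,\mathbf{z}_n)$. By \eqref{E_oab}, $o_{ab}(\mathbf{e}_n)-\rho_n n^2[RSR^\intercal]_{ab}$ is a centered sum over pairs $i\le j$ of independent terms $c_{ij}(A_{ij}-\mathbb{E}A_{ij})$. Here $c_{ij}\in\{0,1,2\}$ accounts for the symmetric double counting, and the self-loops carry their factor $2$.

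\textbf{Step 2 (Bernstein bound).} Each Poisson summand has mean at most $\rho_n\theta_i\theta_j s_{\max}$. The constraint $\sum_i\theta_i=n$ then gives the variance proxy
\[
v\le 4\rho_n s_{\max}\Bigl(\sum_i\theta_i\Bigr)^2=4\rho_n s_{\max}n^2 .
\]
The Poisson tails are sub-exponential with scale $2$. Bernstein's inequality, applied to the deviation $x=n^2\sqrt{16 s_{\max}\rho_n\log k/n^2}=n\sqrt{16 s_{\max}\rho_n\log k}$, gives
\[
\mathbb{P}\bigl(|o_{ab}-\mathbb{E}o_{ab}|\ge x\mid \mathbf{z}_n\bigr)\le 2\exp\Bigl(-\frac{x^2}{2(v+2x/3)}\Bigr).
\]
Since $n\rho_n\to\infty$, the linear term $2x/3$ is negligible against $v$.

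\textbf{Step 3 (union bound and Borel--Cantelli).} Sum this bound over the $k^n$ labelings $\mathbf{e}_n$ and the $k^2$ pairs $(a,b)$. If the sum is summable in $n$, Borel--Cantelli gives the first inequality eventually almost surely, uniformly in $\mathbf{e}_n$. The second inequality then follows without further probability, because $D_a=\sum_b o_{ab}$ and the triangle inequality over $k$ terms produces the factor $k$. Finally, integrate over $\mathbf{Z}_n$, since the bounds do not depend on $\mathbf{z}_n$.

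\textbf{Main obstacle.} The union bound in Step 3 is where I expect the plan to break. With $v\approx 4\rho_n s_{\max}n^2$ and $x^2=16 s_{\max}\rho_n n^2\log k$, the Bernstein exponent is only about $2\log k$. This is of order $\log k$, whereas absorbing $k^n$ labelings needs an exponent of order $n\log k$ plus a $2\log n$ term for summability. The standard deviation of $o_{ab}/n^2$ is already of order $\sqrt{\rho_n}/n$, which is the stated radius up to a constant. So the uniform statement cannot come from a union bound alone.

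I would first try to reduce the effective number of labelings. The idea is to write $o_{ab}(\mathbf{e}_n)$ as a bilinear form $\mathbf{1}_{e=a}^\intercal(A-\mathbb{E}A)\mathbf{1}_{e=b}$ and bound it uniformly by the spectral norm of $A-\mathbb{E}A$. That norm is $O(\sqrt{n\rho_n})$ with high probability, which yields a uniform deviation of order $n\sqrt{n\rho_n}$ for $o_{ab}$. That again exceeds the stated $n\sqrt{\rho_n\log k}$ by a factor $\sqrt{n}$. Both routes suggest the inequality is reachable only at radius $\sqrt{16 s_{\max}\rho_n\log k/n}$. The downstream use in Theorem~\ref{thm_weak} needs only that the radius is $o(\rho_n)$, and $\sqrt{\rho_n/n}=o(\rho_n)$ holds exactly when $n\rho_n\to\infty$. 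So if the $n^2$ radius genuinely fails, I would record this discrepancy explicitly rather than claim the stated rate.
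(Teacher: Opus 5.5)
Your plan is the same as the paper's. Both fix a labeling and a pair $(a,b)$ and prove an exponential tail bound; the paper uses the Poisson moment generating function with $e^t-1-t\le t^2$ instead of Bernstein, which changes nothing. Both then take a union bound over the $k^n$ labelings and $k^2$ pairs, integrate over $\mathbf z_n$, apply Borel--Cantelli, and use the triangle inequality through $D_a=\sum_b o_{ab}$.

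\textbf{The obstacle you flag is real, and the paper's proof breaks at exactly that step.}
\begin{itemize}
\item The paper's per-labeling bounds, written for the deviation $x$ of $o_{ab}/n^2$ (the normalization it silently switches to), are $\exp(-x^2n^2/(4\rho_n s_{\max}))$ for $a\neq b$ and $\exp(-x^2n^2/(8\rho_n s_{\max}))$ for $a=b$.
\item At $x^2=16s_{\max}\rho_n\log k/n^2$ these equal $k^{-4}$ and $k^{-2}$. The factor $k^n$ is therefore not absorbed, and the displayed conclusion $2k^2\exp(-n\log k)$ does not follow.
\item That displayed conclusion is exactly what one gets at $x^2=16s_{\max}\rho_n\log k/n$. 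The diagonal exponent becomes $2n\log k$, and $2k^2\,k^n e^{-2n\log k}=2k^2e^{-n\log k}$.
\item At that radius the optimizing Chernoff parameter is of order $\sqrt{\log k/(s_{\max}n\rho_n)}$. It is admissible precisely because $n\rho_n\to\infty$, the same place your Bernstein linear term uses it. At the stated radius only $n^2\rho_n\to\infty$ would be needed.
\end{itemize}
So the $n^2$ under the square root is evidently a typo for $n$. Your corrected radius is what this argument, yours or the paper's, actually proves.

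\textbf{The stated version is false, not merely out of reach, so you can record the discrepancy with confidence.} Here is a counterexample.
\begin{itemize}
\item Take $\theta_i\equiv 1$ and $k\ge 3$. Fix $T$ with $|T|=\lfloor n/2\rfloor$ and give it label $b$.
\item For $i\notin T$ set $X_i=\sum_{j\in T}(A_{ij}-\rho_nS_{z_iz_j})$. Give label $a$ to those $i$ with $X_i>0$, and a third label to the rest.
\item Then $o_{ab}(\mathbf e_n)-\rho_n n^2[RSR^\intercal]_{ab}=\sum_{i\notin T}X_i^+$.
\item This is a sum of about $n/2$ independent terms, each with mean of order $\sqrt{n\rho_n}$ and variance $O(n\rho_n)$. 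By Chebyshev it is of order $n\sqrt{n\rho_n}$ with probability tending to one.
\item For $k=2$, give the remaining vertices label $b$. This only adds a conditionally centered term of order $n\sqrt{\rho_n}$.
\end{itemize}
The normalized deviation is therefore of order $\sqrt{\rho_n/n}$, which exceeds the stated radius by a factor of order $\sqrt n$. The inequality fails with probability tending to one, and a fortiori does not hold eventually almost surely.

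\textbf{On downstream use.} Your remark is right for Theorem~\ref{theorem7.6}, where only $o(\rho_n)$ is needed. However, the event $\mathcal C$ used for strong consistency is also stated at the $n^2$ scale, and there the scale matters. In the proof of Theorem~\ref{thm7.10}, the quantity $n^2r^2/\rho_n$, with $r$ the radius of $\mathcal C$, is what yields the constant term in $\phi(n,m)$. At the corrected radius this quantity is of order $n$. It is then not dominated by $\rho_n nm$ at $m=1$ when $n\rho_n=\log n$.
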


\begin{proof}
The proof can be found in Section \ref{proof_thm_7.4}.
\end{proof}

As a consequence of Theorem \ref{thm_7.4}, the following corollary establishes that, under the DC-SBM, the normalized edge counts and degree sums are uniformly bounded away from zero and from above. This holds simultaneously for all labelings $\mathbf{e}_n \in \mathcal{F}(n, \alpha)$ eventually almost surely as $n \to \infty$.

\begin{corollaire} \label{cor_7.5} 
Let $(\mathbf{Z}_n, \mathbf{A})$ be generated from a DC-SBM with parameters $(k, \pi, \rho_n S, \boldsymbol{\theta})$, where $\rho_n = 1$ or $\rho_n \to 0$ such that $n\rho_n \to \infty$ as $n \to \infty$. Fix $\alpha > 0$, for any $\delta > 0$, we have that
\begin{equation*}
\frac{o_{ab}(\mathbf{e}_n)}{\rho_n n^2} \in \left( \alpha^2 s_{\min} - \delta, s_{\max} + \delta \right),
\end{equation*}
and
\begin{equation*}
\frac{D_a(\mathbf{e}_n)}{\rho_n n^2} \in \left( k\alpha^2 s_{\min} - \delta, k s_{\max} + \delta \right),
\end{equation*}
simultaneously for all $a, b \in [k]$, and $\mathbf{e}_n \in \mathcal{F}(n, \alpha)$, eventually almost surely as $n \to \infty$.
\end{corollaire}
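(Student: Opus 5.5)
The corollary follows from Theorem~\ref{thm_7.4} once we have deterministic bounds on the population quantities $[RSR^\intercal]_{ab}$ and $[RSR^\intercal\veco]_a$ that hold uniformly over $\mathbf{e}_n\in\mathcal{F}(n,\alpha)$. The concentration error in Theorem~\ref{thm_7.4} is of order $\sqrt{16 s_{\max}\rho_n\log k}/n$. Dividing by $\rho_n$ gives an error of $\sqrt{16 s_{\max}\log k/(n^2\rho_n)}$. This tends to zero because $n^2\rho_n \ge n\rho_n\to\infty$; in the dense case $\rho_n=1$ it is trivial. So for any fixed $\delta>0$ the deviation is eventually below $\delta$ for $o_{ab}$, and below $\delta$ for $D_a$ as well, since the extra factor $k$ is fixed. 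This holds simultaneously for all $a,b$ and all $\mathbf{e}_n$, eventually almost surely, by Theorem~\ref{thm_7.4}.

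The upper bounds are immediate. $R$ has nonnegative entries with $\|R\|_1=1$, and $S_{cd}\le s_{\max}$. Hence
\[
[RSR^\intercal]_{ab}=\sum_{c,d}R_{ac}S_{cd}R_{bd}\le s_{\max}\,[R\veco]_a[R\veco]_b\le s_{\max}.
\]
Summing over $b$ gives $[RSR^\intercal\veco]_a\le s_{\max}[R\veco]_a\sum_b[R\veco]_b\le s_{\max}$. The stated upper bound $ks_{\max}$ is therefore even weaker than what we obtain.

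For the lower bounds, $S_{cd}\ge s_{\min}$ gives $[RSR^\intercal]_{ab}\ge s_{\min}[R\veco]_a[R\veco]_b$. It then suffices to show $[R\veco]_a\ge\alpha$ for every $a$ when $\mathbf{e}_n\in\mathcal{F}(n,\alpha)$. Here lies the one subtlety, and the main step to check. We have $[R\veco]_a=\frac1n\sum_i\theta_i\1\{e_i=a\}$, which is the $\btheta$-mass of the block, not $n_a(\mathbf{e}_n)/n$, as the paper itself stresses.

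I would handle this in one of two ways:
\begin{itemize}
\item invoke the implicit assumption that the $\theta_i$ are bounded below, say $\theta_i\ge\theta_{\min}$, so that $[R\veco]_a\ge\theta_{\min}\alpha$, and absorb the constant;
\item or read the membership condition for $\mathcal{F}$ at the level of the $\btheta$-weighted sizes $n[R\veco]_a\ge\alpha n$, which is exactly how the class constraint is used through $R$ in the sequel.
\end{itemize}
Under that reading, $[RSR^\intercal]_{ab}\ge\alpha^2 s_{\min}$. Summing over $b\in[k]$ then gives $[RSR^\intercal\veco]_a\ge k\alpha^2 s_{\min}$. Combining these deterministic bounds with the $\delta$-concentration above yields both displayed inclusions, simultaneously for all $a,b$ and $\mathbf{e}_n\in\mathcal{F}(n,\alpha)$, eventually almost surely.
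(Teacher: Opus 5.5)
Your argument follows the same route as the paper's proof: rescale the uniform concentration of Theorem~\ref{thm_7.4} by $\rho_n$, and sandwich it with deterministic bounds on $[RSR^\intercal]_{ab}$ and $[RSR^\intercal\veco]_a$ that come from $s_{\min}\le S_{cd}\le s_{\max}$. Your upper bounds are correct. Applying the second inequality of Theorem~\ref{thm_7.4} directly to $D_a$ gives exactly the stated endpoints. The paper instead sums over $b$ and only obtains $k(\alpha^2 s_{\min}-\delta)$ and $k(s_{\max}+\delta)$, which is harmless since $\delta$ is arbitrary. Your remark that $n^2\rho_n\to\infty$ would suffice is an artefact of the rate printed in Theorem~\ref{thm_7.4}. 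A Chernoff bound combined with a union bound over $k^n$ labelings only supports deviations of order $\sqrt{\rho_n\log k/n}$, and then $n\rho_n\to\infty$ is exactly what is needed. Since that is assumed, your step is unaffected.

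The subtlety you isolate is genuine, and it is exactly where the paper's own proof is defective. The paper passes from $\rho_n s_{\min}\bigl(\sum_i\theta_i\1\{e_i=a\}\bigr)\bigl(\sum_j\theta_j\1\{e_j=b\}\bigr)$ to $\rho_n s_{\min}\,n_a(\mathbf e_n)n_b(\mathbf e_n)$. That step uses $n[R\veco]_a\ge n_a(\mathbf e_n)$, which the constraint \eqref{def_contrainte} does not give, since it is imposed only along the true labels. Without an extra hypothesis the stated lower endpoints are in fact false. Take $\theta_i\in\{\eta,2-\eta\}$ in essentially equal numbers inside each true block. Let $\mathbf e_n\in\mathcal F(n,\alpha)$ send $\lceil\alpha n\rceil$ nodes with $\theta_i=\eta$ to block $a$. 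Then $[R\veco]_a\approx\alpha\eta$, so $[RSR^\intercal]_{aa}\le s_{\max}\alpha^2\eta^2+o(1)$ and $[RSR^\intercal\veco]_a\le s_{\max}\alpha\eta+o(1)$. By Theorem~\ref{thm_7.4}, the empirical ratios eventually fall below $\alpha^2 s_{\min}-\delta$ and $k\alpha^2 s_{\min}-\delta$ once $\eta$ and $\delta$ are small enough.

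Of your two fixes, only the first is sound, and even then the constant cannot be absorbed into the statement as written. Since $\btheta$ averages to $1$ on every true block, $\theta_{\min}\le 1$, with equality only if $\theta_i=1$ for all $i$. What you actually prove is the corollary with lower endpoint $\theta_{\min}^2\alpha^2 s_{\min}-\delta$ for $o_{ab}$, and, say, $\theta_{\min}\alpha s_{\min}-\delta$ for $D_a$ via $[RSR^\intercal\veco]_a\ge s_{\min}[R\veco]_a$. This weaker form is all that Theorem~\ref{theorem7.6} needs, since there the ratios only have to stay in a fixed compact subset of $(0,\infty)$.

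Your second fix should be dropped. $\mathcal F(n,\alpha)$ is defined through unweighted counts, and a $\btheta$-weighted constraint would make $\hat{\mathbf z}_n$ depend on the unknown nuisance parameter. Nor does the paper use the constraint that way. In the weak-consistency proof, the condition invoked concerns the column sums $[R^\intercal\veco]_a=n_a(\mathbf Z_n)/n$, which do not depend on $\mathbf e_n$ at all.
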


\begin{proof}
The proof can be found in Section \ref{proof_cor_7.5}.
\end{proof}

We  define the quantity $X(\mathbf{e}_n,\mathbf{z}_n)$, which measures the difference between the observed criterion $Q$ and the function $H_{P,n}$ computed from $R(\mathbf{e}_n,\mathbf{z}_n)$
\begin{equation}\label{defX}
X(\mathbf{e}_n,\mathbf{z}_n) = Q(\mathbf{e}_n)-H_{P,n}(R(\mathbf{e}_n,\mathbf{z}_n)).
\end{equation}

The following result shows that $X(\mathbf{e}_n,\mathbf{Z}_n)$ concentrates around $X(\mathbf{Z}_n,\mathbf{Z}_n)$ for all admissible labelings $\mathbf{e}_n$, with deviations of order $\rho_n$.

\begin{theorem}\label{theorem7.6} 
Let $(\mathbf{Z}_n, \mathbf{A})$ be generated from a DC-SBM with parameters $(k, \pi, \rho_n S, \boldsymbol{\theta})$, where $\rho_n = 1$ or $\rho_n \to 0$ such that $n\rho_n \to \infty$ as $n \to \infty$.
Fix $\alpha > 0$. Then, for any $\delta > 0$, we have
\begin{equation*}
|X(\mathbf{e}_n, \mathbf{Z}_n) - X(\mathbf{Z}_n, \mathbf{Z}_n)| < \delta \rho_n,
\end{equation*}
simultaneously for all $\mathbf{e}_n \in \mathcal{F}(n, \alpha)$, eventually almost surely as $n \to \infty$.
\end{theorem}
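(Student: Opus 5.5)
We write $F(x)=x\log x$ and expand $Q$ and $H_{S,n}$ (the paper's $H_{P,n}$ is the same function). Since $\sum_{a,b} o_{ab}(\mathbf{e}_n)=\sum_a D_a(\mathbf{e}_n)=2L$, where $2L=\sum_{i,j}A_{ij}$ does not depend on $\mathbf{e}_n$, we get
\[
n^2 Q(\mathbf{e}_n)=\sum_{a,b}F\bigl(o_{ab}(\mathbf{e}_n)\bigr)-2\sum_a F\bigl(D_a(\mathbf{e}_n)\bigr)+2L\log(2L)\cdot 0+\ldots
\]
More carefully: $\sum_{a,b}o_{ab}\log(D_aD_b)=2\sum_a D_a\log D_a$. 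Hence
\[
n^2Q(\mathbf{e}_n)=\sum_{a,b}F(o_{ab})-2\sum_aF(D_a).
\]
Replacing $o_{ab},D_a$ by $\mu_{ab}=\mathbb{E}[o_{ab}\mid\bz]$ and $\nu_a=\mathbb{E}[D_a\mid\bz]$ gives $n^2H_{S,n}(R)$. Write $\hat o_{ab}=o_{ab}/(\rho_n n^2)$, $\bar o_{ab}=[RSR^\intercal]_{ab}$, and similarly $\hat D_a,\bar D_a$. Since $\sum_{ab}\hat o_{ab}=\sum_{ab}\bar o_{ab}\cdot(\text{random}/\text{deterministic total})$, the scale factors $\log(\rho_n n^2)$ cancel up to terms depending only on $2L$ and its mean. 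Specifically,
\[
\sum_{ab}F(\rho_n n^2 x_{ab})-2\sum_a F(\rho_n n^2 y_a)=\rho_n n^2\Bigl[\sum F(x_{ab})-2\sum F(y_a)\Bigr]-\rho_n n^2 \log(\rho_n n^2)\sum_{ab}x_{ab},
\]
using $\sum_a y_a=\sum_{ab}x_{ab}$. The last term involves $\sum_{ab}\hat o_{ab}=2L/(\rho_n n^2)$ for $Q$ and $\sum_{ab}\bar o_{ab}=\sum_{ab}[SR^\intercal\veco\ldots]$ for $H$. Both are independent of $\mathbf{e}_n$: the first trivially, the second because $\veco^\intercal RSR^\intercal\veco=\veco^\intercal R(\bz,\bz)SR(\bz,\bz)\veco$, as the column sums of $R$ are $n_b(\bz)/n$. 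Hence these terms cancel in $X(\mathbf{e}_n,\bZ)-X(\bZ,\bZ)$, which is the key structural point. We are left with
\[
\frac{X(\mathbf{e}_n,\bZ)}{\rho_n}=\Phi(\hat o(\mathbf{e}_n),\hat D(\mathbf{e}_n))-\Phi(\bar o(R),\bar D(R))+c_n,
\qquad \Phi(x,y)=\sum_{ab}F(x_{ab})-2\sum_aF(y_a),
\]
where $c_n$ does not depend on $\mathbf{e}_n$. It therefore suffices to show that $\sup_{\mathbf{e}_n\in\mathcal F(n,\alpha)}|\Phi(\hat o,\hat D)-\Phi(\bar o,\bar D)|<\delta/2$ eventually almost surely.

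For this we invoke Theorem~\ref{thm_7.4}. It gives, uniformly over $\mathbf{e}_n$, $|\hat o_{ab}-\bar o_{ab}|<\sqrt{16s_{\max}\log k/(\rho_n n^2)}$, and likewise for $\hat D_a$ up to a factor $k$. This bound is $o(1)$ because $n\rho_n\to\infty$ implies $\rho_n n^2\to\infty$. Corollary~\ref{cor_7.5} (together with the analogous deterministic bounds on $\bar o,\bar D$ for $R$ coming from $\mathcal F(n,\alpha)$) places all arguments in the compact interval $[\alpha^2 s_{\min}/2,\,2ks_{\max}]$. On this interval $F$ is Lipschitz with constant $L_F=\max|1+\log x|$. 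Hence $|\Phi(\hat o,\hat D)-\Phi(\bar o,\bar D)|\le L_F\bigl(\sum_{ab}|\hat o_{ab}-\bar o_{ab}|+2\sum_a|\hat D_a-\bar D_a|\bigr)\to0$ uniformly, which proves the claim.

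The main obstacle is the lower bound away from zero for the arguments, which is needed for the Lipschitz control of $x\log x$. Corollary~\ref{cor_7.5} is stated for $\mathbf{e}_n\in\mathcal F(n,\alpha)$, but with $\btheta$-weights the row sums $[R\veco]_a=\frac1n\sum_i\theta_i\1\{e_i=a\}$ need not be at least $\alpha$. I would first try to use the corollary as given. If it fails, I would note that $F$ is uniformly continuous on $[0,M]$, so the bound $|F(x)-F(y)|\le\omega(|x-y|)$ with $\omega(h)=h\log(1/h)$ for small $h$ still gives uniform convergence to zero, since the deviations are $O((\rho_nn^2)^{-1/2})$. This avoids any lower bound, and only the upper bound $s_{\max}$ from Theorem~\ref{thm_7.4} is needed.
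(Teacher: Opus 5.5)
Your proposal is correct, and its main line is the paper's proof. You expand $Q$ and $H_{S,n}$ through $f(x)=x\log x$ and factor out $\log(\rho_n n^2)$. You observe that the resulting terms do not depend on $\mathbf{e}_n$, so they cancel in $X(\mathbf{e}_n,\mathbf{Z}_n)-X(\mathbf{Z}_n,\mathbf{Z}_n)$. You then confine the arguments to a compact interval away from $0$ with Corollary~\ref{cor_7.5}, apply a mean value bound for $f$, and conclude with Theorem~\ref{thm_7.4}.

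On the cancellation you are more careful than the paper. The paper writes the same $C_n$ in both expansions, yet the one from $Q$ involves the observed total $\sum_{a,b}o_{ab}$, while the one from $H_{S,n}$ involves $r^\intercal S r$ with $r=R^\intercal\veco=(n_b(\mathbf{Z}_n)/n)_b$. Both are free of $\mathbf{e}_n$, which is all that matters.

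Your worry about the lower bound is justified. The proof of Corollary~\ref{cor_7.5} uses $\sum_i\theta_i\1\{e_i=a\}\ge n_a(\mathbf{e}_n)$, which the identifiability constraint does not give, since it normalizes $\btheta$ over the true blocks only. So with no lower bound on the $\theta_i$, the weighted row sums $[R\veco]_a$, and hence $[RSR^\intercal]_{ab}$, can be arbitrarily small for $\mathbf{e}_n\in\mathcal F(n,\alpha)$. That is exactly where $f'$ blows up.

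Your fallback uses the modulus of continuity of $x\log x$ on $[0,M]$, which is of order $h\log(1/h)$. It needs only the upper bounds $[RSR^\intercal]_{ab}\le s_{\max}$ and $[RSR^\intercal\veco]_a\le s_{\max}$, which follow from $\sum_i\theta_i=n$, together with uniform $o(1)$ normalized deviations. It is the more robust argument and should be your primary one.

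One caveat on the input you import from Theorem~\ref{thm_7.4}. A Chernoff bound with a union bound over the $k^n$ labelings gives $\sup_{\mathbf{e}_n}|o_{ab}(\mathbf{e}_n)/n^2-\rho_n[RSR^\intercal]_{ab}|$ of order $\sqrt{s_{\max}\rho_n\log k/n}$. The theorem as written claims $\sqrt{s_{\max}\rho_n\log k/n^2}$, but with that choice the exponent is only of order $\log k$ and cannot absorb the factor $k^n$. After dividing by $\rho_n$, the correct rate is of order $(n\rho_n)^{-1/2}$, which is still $o(1)$, so your conclusion is unaffected. It is here, and not merely through $\rho_n n^2\to\infty$, that the hypothesis $n\rho_n\to\infty$ is actually used.
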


\begin{proof}
The proof can be found in Section \ref{proof_theorem7.6}.
\end{proof}

\subsection{Proof of Theorem \ref{thm_weak} (weak consistency)} \label{sec_proof_weak}

We aim to prove that $\hat{\mathbf{z}}_n$ is weakly consistent, i.e.,
\[
\forall \epsilon > 0, \quad \mathbb{P}\big(m(\hat{\mathbf{z}}_n,\mathbf{z}_n) < \epsilon n \big) \longrightarrow 1 \quad \text{as } n \to \infty.
\]
We may relabel $\hat{\mathbf{z}}_n$ so that $m(\hat{\mathbf{z}}_n,\mathbf{z}_n)$ is minimized. In other words, we implicitly take the infimum over permutations in the right-hand side term of \eqref{m_def} without writing it explicitly. Under this convention, we can directly compare $\hat{\mathbf{z}}_n$ and $\mathbf{z}_n$ and by Lemma~\ref{lemma7.1}, 
\[
m(\hat{\mathbf{z}}_n,\mathbf{Z}_n) = \frac{n}{2} \big\| R(\hat{\mathbf{z}}_n,\mathbf{Z}_n) - R(\mathbf{Z}_n,\mathbf{Z}_n) \big\|_1.
\]
Hence, it suffices to show that for all $\epsilon>0$,
\[
\mathbb{P}\Big(\frac{1}{2} \| R(\hat{\mathbf{z}}_n,\mathbf{Z}_n) - R(\mathbf{Z}_n,\mathbf{Z}_n) \|_1 < \epsilon \Big) \longrightarrow 1,
\]
i.e. 
$$\|R(\hat{\mathbf{z}}_n,\mathbf{Z}_{n})-\tDiag(R(\hat{\mathbf{z}}_n,\mathbf{Z}_{n})^\intercal\veco)\|_1<\epsilon,$$ 
eventually almost surely as $n\rightarrow\infty$.
Let $\mathcal{R}_\epsilon$ denote the set of probability matrices $R$ satisfying
\[
\min_{\sigma} \| D_\sigma R - \tDiag(R^\intercal \veco) \|_1 \geq \epsilon, 
\quad\text{and}\quad
\min_{a} [R^\intercal \veco]_a \geq \epsilon,
\]
where $D_\sigma$ is a permutation matrix. The first condition ensures that $R$ cannot be made diagonal through row permutations, while the second ensures that each column of $R$ has strictly positive total weight.
We now show that $\widehat R := R(\hat{\mathbf{z}}_n,\mathbf{Z}_n) \notin \mathcal{R}_\epsilon$. Suppose, by contradiction, that $\widehat R \in \mathcal{R}_\epsilon$. Let $\widehat X := X(\hat{\mathbf{z}}_n,\mathbf{Z}_n)$ and $X := X(\mathbf{Z}_n,\mathbf{Z}_n)$. Then, by the definition of $\hat{\mathbf{z}}_n$ as a maximizer of $Q$ over the set $ \mathcal{F}(n, \alpha)$ and since $\alpha < \min_a \pi_a$, we have
\begin{equation} \label{eq:Q_diff}
Q(\hat{\mathbf{z}}_n) - Q(\mathbf{Z}_n) \ge 0 \quad \text{eventually almost surely.}
\end{equation}
Define
\[
\eta_n := \inf_{R \in \mathcal{R}_\epsilon} \big\{ H_{\rho_n S, n}(\tDiag(R^\intercal \veco)) - H_{\rho_n S, n}(R) \big\}.
\]
By Lemma~\ref{lemma7.2}, we can rewrite the definition of $\eta_n$ in terms of the functional $G_S$ as follows
$$\eta_n = \rho_n \inf_{R \in \mathcal{R}_\epsilon} \big\{ G_{S}(\tDiag(R^\intercal \veco)) - G_{S}(R) \big\}.$$
Let us define the mapping $\Phi(R) := G_{S}(\tDiag(R^\intercal \veco)) - G_{S}(R)$. We want to prove that $\eta_n \ge c \rho_n$ for some constant $c > 0$, so it suffices to show that $\inf_{R \in \mathcal{R}_\epsilon} \Phi(R) > 0$. We establish this strictly inequality through a compactness and continuity argument: first, the mapping $R \mapsto \Phi(R)$ is continuous on $\mathcal{R}_\epsilon$. Indeed, since $S$ is a fixed positive full rank matrix and $[R^\intercal \veco]_a > 0$ for all $a \in [k]$, the matrix entries are continuous operations and the arguments inside the logarithms of $G_S(R)$ are strictly positive and bounded away from zero. Second, the set $\mathcal{R}_\epsilon$ is a closed and bounded subset of the finite-dimensional space of matrices (constrained by the norm $\|R\|_1 = 1$), which ensures that $\mathcal{R}_\epsilon$ is compact. By the extreme value theorem, the continuous function $\Phi$ attains its minimum on the compact set $\mathcal{R}_\epsilon$. Therefore, there exists a matrix $R^* \in \mathcal{R}_\epsilon$ such that$$c := \inf_{R \in \mathcal{R}_\epsilon} \Phi(R) = \Phi(R^*).$$Finally, by the definition of $\mathcal{R}_\epsilon$, any matrix within this set is strictly bounded away from the set of diagonal matrices up to permutation. Consequently, $R^*$ cannot be a diagonal matrix up to permutation. According to the strict inequality condition established in Lemma~\ref{lemma7.2}, $\Phi(R) = 0$ if and only if $R$ is diagonal up to permutation. Since $R^* \in \mathcal{R}_\epsilon$, we necessarily have $c = \Phi(R^*) > 0$. Multiplying by $\rho_n$ yields $\eta_n \ge c \rho_n$. Hence,
\begin{equation} \label{eq:H_diff}
H_{\rho_n S, n}(\tDiag(\widehat R^\intercal \veco)) - H_{\rho_n S, n}(\widehat R) \ge c \rho_n.
\end{equation}
Combining \eqref{eq:Q_diff} and \eqref{eq:H_diff}, by the definition \eqref{defX} of $X$, we obtain
\[
\widehat X - X \ge c \rho_n.
\]
However, Theorem~\ref{theorem7.6} ensures that for any $\delta>0$,
\[
|\widehat X - X| < \delta \rho_n,
\]
eventually almost surely, which is a contradiction if $\delta < c$. Therefore, $\widehat R \notin \mathcal{R}_\epsilon$ eventually almost surely. 
The second condition of $\mathcal{R}_\epsilon$ is automatically satisfied because $\hat{\mathbf{z}}_n \in \mathcal{F}(n, \alpha)$. Consequently, the first condition cannot hold, implying
\[
\big\| \widehat R - \tDiag(\widehat R^\intercal \veco) \big\|_1 < \epsilon.
\]
Thus, we conclude that
\[
\mathbb{P}\Big( \| R(\hat{\mathbf{z}}_n, \mathbf{Z}_n) - R(\mathbf{Z}_n, \mathbf{Z}_n) \|_1 < \epsilon \Big) \longrightarrow 1,
\]
from which weak consistency follows
\[
\mathbb{P}\big( m(\hat{\mathbf{z}}_n,\mathbf{z}_n) < \epsilon n \big) \longrightarrow 1.
\]

\section{Strong consistency results}\label{sect_aux_strong}

This section gathers the technical results required for the proof of Theorem \ref{thm_strong} and the proof of this theorem. We first establish concentration and deviation bounds for the edge count statistics, and then derive a control of the likelihood difference. Finally, we analyze the local behavior of the population criterion through a variational argument.

\subsection{Concentration and deviation bounds}

To establish these results, we define, for any $a,b \in [k]$ and any labelings $\mathbf{e}_{n},\mathbf{z}_{n} \in [k]^n$, the quantity
\begin{equation}\label{def_W}
W_{ab}(\mathbf{e}_n,\mathbf{z}_n) =\frac{1}{n^2} \left\{ o_{ab}(\mathbf{e}_n) - \mathbb{E}[o_{ab}(\mathbf{e}_n)\mid \mathbf{z}_n] - o_{ab}(\mathbf{z}_n) + \mathbb{E}[o_{ab}(\mathbf{z}_n)\mid \mathbf{z}_n] \right\} .
\end{equation}

We begin by establishing concentration inequalities for the deviations of empirical edge counts from their expectations. These results will be used to control the fluctuations uniformly over all possible labelings.

\begin{prop}
\label{prop7.9}
Let $(\mathbf{Z}_n,\mathbf{A}_{n \times n}) $ be generated from a DC-SBM with parameters $(k,\pi, \rho_nS, \boldsymbol{\theta})$ with $\rho_n=1$ or $\rho_n\to0$ as $n\to\infty$. For $\mathbf{z}_n, \mathbf{e}_n \in [k]^n$  and for 
\begin{align*} 
W(\mathbf{e}_n,\mathbf{z}_n)  = &\sum_{1\leq a, b\leq k}\, W_{ab}(\mathbf{e}_n,\mathbf{z}_n) \log \Bigl( \rho_n[R(\mathbf{z}_{n},\mathbf{z}_{n})SR(\mathbf{z}_{n},\mathbf{z}_{n})]_{ab }\Bigr) \\
&-  2  \sum_{1\leq a,b \leq k}\,  W_{ab}(\mathbf{e}_n,\mathbf{z}_n)    \log  \Bigl( \rho_n[R(\mathbf{z}_{n},\mathbf{z}_{n})SR(\mathbf{z}_{n},\mathbf{z}_{n})\mathbf 1]_{a }\Bigr),  \\
\end{align*}
we have that 
\begin{align*}
\mathbb{P}&\Bigl(W(\mathbf{e}_n,\mathbf{z}_n) >x\,|\, \mathbf{z}_n\Bigr)\\
& \;\leq\; \exp\Bigl[ - \sup_{t\in (0,1 ]}  \Bigl\{n^2 xt-  \rho_n n^2 C_t[R(\mathbf{e}_{n},\mathbf{z}_{n}),S]\Bigr\}  +\tilde D \rho_n n m(\mathbf{e}_{n},\mathbf{z}_{n}) +D\rho_nm(\mathbf{e}_{n},\mathbf{z}_{n})^2 \Bigr] ,
\end{align*}
with  
\begin{equation}\label{tildeC} 
C_t(R,S) = \sum_{a}\sum_{b\neq b'} [R^\intercal \mathbf 1]_a  K_t(S_{ab} \| S_{ab'})R_{bb'}    ,
\end{equation}
$K_t(p \| q) = p^t q^{1-t} - t q \log(p/q) - q$, while \( D \) and \( \tilde{D} \) are two constants depending only on \( S \).
\end{prop}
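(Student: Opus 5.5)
The plan is a Chernoff bound. First I will write $W(\mathbf{e}_n,\mathbf{z}_n)$ as a centred linear combination of the independent Poisson variables $A_{ij}$, $i\le j$. Put $\ell_{ab}=\log(\rho_n[R(\mathbf z_n,\mathbf z_n)SR(\mathbf z_n,\mathbf z_n)]_{ab})$ and $m_a=\log(\rho_n[R(\mathbf z_n,\mathbf z_n)SR(\mathbf z_n,\mathbf z_n)\veco]_a)$, and set $w_{ab}=\ell_{ab}-m_a-m_b$; symmetrising turns the factor $2$ into $m_a+m_b$. Then
\[
n^2 W=\sum_{i,j}A_{ij}\bigl(w_{e_ie_j}-w_{z_iz_j}\bigr)-\mathbb E[\,\cdot\mid\mathbf z_n].
\]
Only pairs with $(e_i,e_j)\neq(z_i,z_j)$ contribute. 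Since $R(\mathbf z_n,\mathbf z_n)$ is diagonal, $w_{ab}=\log S_{ab}+\mathrm{const}_a+\mathrm{const}_b$, where the constants come from $\log(\rho_n n_a n_b/n^2)$ and the degree normalisations. By Markov's inequality, for $t\in(0,1]$,
\[
\mathbb P(W>x\mid\mathbf z_n)\le \exp\Bigl(-n^2xt+\sum_{i\le j}\rho_n\theta_i\theta_jS_{z_iz_j}\bigl(e^{t\Delta_{ij}}-1-t\Delta_{ij}\bigr)\Bigr),
\]
with $\Delta_{ij}=w_{e_ie_j}-w_{z_iz_j}$. Self-loops are handled the same way; they cost only $O(\rho_n n)$ per misclassified node, which goes into the $\tilde D$ term.

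Next I split the pairs by how many endpoints are misclassified. If both $i$ and $j$ are misclassified, $|\Delta_{ij}|$ is bounded by a constant depending on $S$ (here $s_{\min}>0$ is used), so the exponent is at most $D'\rho_n\theta_i\theta_j$. Summing over such pairs gives at most $D\rho_n m(\mathbf e_n,\mathbf z_n)^2$, using $\sum_{i\text{ misclassified}}\theta_i=m$.

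The main contribution comes from pairs with exactly one endpoint misclassified, say $z_i=b'$, $e_i=b$, $z_j=e_j=a$. Then the $\log S$ part of $\Delta$ is $\log(S_{ab}/S_{ab'})$, so the Poisson mean times $e^{t\Delta}$ produces $S_{ab}^tS_{ab'}^{1-t}$ times the exponentials of the node-normalisation terms. Summing over $j$ in block $a$ gives $\sum_j\theta_j\1\{z_j=a\}=n_a=n[R^\intercal\veco]_a$, and summing over $i$ gives $n\,R_{bb'}$, up to corrections counting $j$ that are themselves misclassified; those corrections are absorbed into the $m^2$ and $\tilde D\rho_n n m$ terms. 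Collecting terms yields $\rho_n n^2\sum_a\sum_{b\ne b'}[R^\intercal\veco]_a K_t(S_{ab}\|S_{ab'})R_{bb'}=\rho_n n^2C_t(R,S)$. The $-q$ in $K_t$ comes from the $-1$, and the $-tq\log(p/q)$ comes from $-t\Delta$ times the mean.

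The hard step is the normalisation constants $\mathrm{const}_a$. They are of order $\log\rho_n$ or $\log n$, so they are not bounded. I will check that in each one-endpoint term they enter as $c_b-c_{b'}$, a fixed finite quantity independent of $n$. Their total contribution is linear in $t$ and, by the constraint $\sum\theta_i=n_a$, is proportional to $\rho_n n\,m$. It can therefore be bounded by $\tilde D\rho_n nm$, using $|e^{tc}-1-tc|\le C$ for $t\le1$; alternatively it is exactly the part that later becomes the KL correction. If this cancellation cannot be made exact, I will bound the discrepancy between $e^{t\Delta}$ with and without the constants by a $t$-independent multiple of $\rho_n n\,m$, which still has the stated form. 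Finally I take the supremum over $t\in(0,1]$.
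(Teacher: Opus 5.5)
Your plan follows the paper's proof closely. Both write $n^2W$ as a centred linear form in the independent Poisson variables. Both let the $n_a(\mathbf z_n)$ factors cancel; your symmetrisation $w_{ab}=\ell_{ab}-m_a-m_b$ is the paper's antisymmetry argument. Both send pairs with two misclassified endpoints to $D\rho_n m^2$ and absorb the degree-normalisation part into $\tilde D\rho_n nm$.

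The one structural difference is how that last part is separated from $\log(S_{e_ie_j}/S_{z_iz_j})$. The paper uses convexity of $L(u)=e^u-1-u$, namely $L(u+v)\le \frac12 L(2u)+\frac12 L(2v)$, which doubles the arguments. You instead compare $L(t\Delta_{ij})$ with $L(t\log(S_{ab}/S_{ab'}))$ pair by pair. That is valid and avoids doubling $t$ in the main term.

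Your ``hard step'' is actually explicit. After symmetrisation, $w_{ab}=\log S_{ab}-\log\rho_n-\log\bar S_a-\log\bar S_b$ with $\bar S_a=\sum_c (n_c(\mathbf z_n)/n)S_{ac}\in[s_{\min},s_{\max}]$. Hence $\Delta_{ij}=\log(S_{e_ie_j}/S_{z_iz_j})+\log(\bar S_{z_i}/\bar S_{e_i})+\log(\bar S_{z_j}/\bar S_{e_j})$, and the last two terms are bounded by $\log(s_{\max}/s_{\min})$. The resulting correction is not linear in $t$, as you claim. It is, however, at most a constant times $\lambda_{ij}$, so your fallback is the right argument. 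Identifying it with the KL correction is neither needed nor done in the paper. Be aware that every term absorbed this way is of order $\rho_n nm$, the same order as $\rho_n n^2C_t$ itself, since $\sum_{b\neq b'}R_{bb'}=m/n$. The statement tolerates them only because $\tilde D$ is an unspecified constant.

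What does need fixing is a factor $2$ in your Chernoff display. Since $n^2W=\sum_{i,j}(A_{ij}-\lambda_{ij})\Delta_{ij}$ over ordered pairs, the coefficient of $A_{ij}$ for $i<j$ is $2\Delta_{ij}$. Also, $A_{ii}$ is twice a $\mathcal P(\lambda_{ii}/2)$ variable. Therefore
\[
\log \mathbb E\bigl[e^{t n^2 W}\mid \mathbf z_n\bigr]=\frac12\sum_{i,j}\lambda_{ij}\,L(2t\Delta_{ij}),
\]
not $\sum_{i\le j}\lambda_{ij}L(t\Delta_{ij})$. Carried through your computation, the one-endpoint pairs then give $\rho_n n^2\sum_a\sum_{b\neq b'}R_{aa}K_{2t}(S_{ab}\|S_{ab'})R_{bb'}$. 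In the variable $s=2t$ your bound reads $\exp[-\sup_{s\in(0,2]}\{n^2xs/2-\rho_n n^2 C_s(R,S)\}+\tilde D\rho_n nm+D\rho_n m^2]$. That is not the pairing of $n^2xt$ with $C_t$ you wrote.

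To land on the statement as displayed you need one extra absorption:
\begin{itemize}
\item for $t\in(0,1]$ and $p,q\in[s_{\min},s_{\max}]$, $K_{2t}(p\|q)\le K_t(p\|q)+D_0$ with $D_0$ depending only on $S$;
\item $\sum_a R_{aa}\sum_{b\neq b'}R_{bb'}\le m/n$, so the excess is at most $D_0\rho_n nm$ and goes into $\tilde D$;
\item finally, use $R_{aa}\le[R^\intercal\veco]_a$ and $K_t\ge 0$, as the paper does.
\end{itemize}
For comparison, the paper's own derivation passes from $\sum_{i\le j}$ to $\sum_{i,j}$ and then applies the convexity split. It ends with $n^2xt/4$ rather than $n^2xt$, so your corrected argument actually comes closer to the displayed bound.
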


\begin{proof}
The proof can be found in Section \ref{proof_prop7.9}.
\end{proof}

The next result provides a finer control at the level of individual block counts.

\begin{prop}
\label{propB.2}
Let $(\mathbf{Z}_n,\mathbf{A}_{n \times n}) $ be generated from a DC-SBM with parameters $(k,\pi, \rho_nS, \boldsymbol{\theta})$. For $W_{ab}(\mathbf{e}_n,\mathbf{z}_n)$ defined as in \eqref{def_W} and for all $x>0$, we have 
\begin{equation*}
 \mathbb{P}\Bigl( |W_{ab}(\mathbf{e}_n,\mathbf{z}_n) | > x \mid\mathbf{z}_n \Bigr) \;\leq\;2\exp\bigl(- xn^2 + 12\rho_ns_{\max} m(\mathbf{e}_n,\mathbf{z}_n)n\bigr).
 \end{equation*}
\end{prop}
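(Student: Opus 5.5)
We bound $\mathbb{P}(W_{ab}>x\mid\mathbf{z}_n)$ with a Chernoff bound at the fixed exponent $\lambda=n^2$; applying the same argument to $-W_{ab}$ gives the lower tail, and a union bound produces the factor $2$.

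\emph{Step 1: reduce to a linear statistic of the edges.} Write
\[
n^2W_{ab}=\sum_{i,j}\bigl(\1\{e_i=a,e_j=b\}-\1\{z_i=a,z_j=b\}\bigr)(A_{ij}-\mathbb{E}[A_{ij}\mid\mathbf{z}_n]).
\]
Call the coefficient $c_{ij}\in\{-1,0,1\}$. It vanishes unless $e_i\neq z_i$ or $e_j\neq z_j$, so only pairs meeting the misclassified set $M=\{i:e_i\neq z_i\}$ contribute. Grouping $(i,j)$ with $(j,i)$ for the independent variables $A_{ij}$, $i\le j$, each independent Poisson variable receives a coefficient $\gamma_{ij}=c_{ij}+c_{ji}$ (or $c_{ii}$ on the diagonal, where $A_{ii}$ is twice a Poisson). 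In every case the variable enters with a coefficient of absolute value at most $2$.

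\emph{Step 2: Poisson moment generating function.} For $Y\sim\mathcal{P}(\mu)$ and $|s|\le 2$,
\[
\log\mathbb{E}e^{s(Y-\mu)}=\mu(e^{s}-1-s)\le \mu(e^2-3)\le 5\mu.
\]
A diagonal term $2Y$ with $Y\sim\mathcal{P}(\mu/2)$ is handled the same way. Taking $\lambda=1$ on the unnormalized sum $n^2W_{ab}$, which corresponds to exponent $n^2$ on $W_{ab}$, Markov's inequality gives
\[
\mathbb{P}(n^2W_{ab}>xn^2)\le \exp\Bigl(-xn^2+C\sum_{(i,j):\,\gamma_{ij}\neq0}\rho_n\theta_i\theta_jS_{z_iz_j}\Bigr).
\]

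\emph{Step 3: bound the active mean mass by $m(\mathbf{e}_n,\mathbf{z}_n)$.} This is where the constant $12$ and the $\theta$-weighting must come out right. The active pairs have $i\in M$ or $j\in M$, so
\[
\sum_{\text{active}}\rho_n\theta_i\theta_jS_{z_iz_j}\le \rho_n s_{\max}\sum_{i\in M}\theta_i\sum_j\theta_j\cdot(\text{const}).
\]
Since $\sum_j\theta_j=n$ by the identifiability constraint, this is at most $\text{const}\cdot\rho_n s_{\max}\,n\sum_{i\in M}\theta_i$. This is exactly $m(\mathbf{e}_n,\mathbf{z}_n)n$, with $m$ interpreted as the unpermuted weighted distance; we take the labels already aligned, as in the proof of Theorem~\ref{thm_weak}.

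To get the constant $12$, we track the crude factors: $\mu(e^{s}-1-s)$ with $|s|\le 2$ gives at most $e^2-3<4.39$; symmetric double counting of $(i,j)$ and $(j,i)$ gives a factor $2$; and the union of $i\in M$ and $j\in M$ is bounded by a sum over both, another factor $\le 2$. If this accounting overshoots $12$, we sharpen it by noting that $\gamma_{ij}=\pm2$ only occurs when both endpoints lie in a configuration where the two indicator patterns coincide in sign. Typically $|\gamma_{ij}|\le 1$ except on a lower-order set, which lets us use $e-2$ rather than $e^2-3$ for most terms.

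The main obstacle is this bookkeeping, not the probabilistic argument itself. The lower tail follows by the same computation with $s\mapsto -s$, since $e^{-s}-1+s\le e^{s}-1-s$ for $s\ge0$.
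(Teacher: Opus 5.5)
Your route is the same as the paper's: a Chernoff bound with exponent $n^2$ on $W_{ab}$, a uniform bound on $L(y)=e^y-1-y$ for coefficients in $[-2,2]$, and a bound on the active mean mass by $n$ times the $\btheta$-weight of the misclassified set $M$. The paper gets $12=3\cdot 2\cdot 2$ from three estimates: $L(y)\le 3|y|$ on $[-2,2]$; the coefficient bounded in absolute value by $2(\1\{e_i\neq z_i\}+\1\{e_j\neq z_j\})$; and $\sum_{i\le j}\le 2\sum_{i,j}$. Your Steps 1 and 2 are fine. Your remark that $m$ must be read as the unpermuted $\sum_{i\in M}\theta_i$ is correct, and the paper's proof relies on it implicitly as well. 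The problem is the constant accounting in Step 3, and the repair you propose would not work.

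The ``symmetric double counting'' factor $2$ is spurious. Step 1 already merged $(i,j)$ and $(j,i)$ into one coefficient $\gamma_{ij}$ on the single independent variable indexed by $i\le j$. That variable has mean $\rho_n\theta_i\theta_jS_{z_iz_j}$, or $\rho_n\theta_i^2S_{z_iz_i}/2$ for the Poisson underlying $A_{ii}$. So the exponent is at most $(e^2-3)\sum_{i\le j,\,\gamma_{ij}\neq 0}\rho_n\theta_i\theta_jS_{z_iz_j}$. Now send each active unordered pair $\{i,j\}$ to an ordered pair whose first entry lies in $M$. This map is injective, so the sum is at most $\rho_n s_{\max}\, n\sum_{i\in M}\theta_i$. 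This gives the constant $e^2-3<12$, or $2(e^2-3)<12$ with the cruder bound $\1\{i\in M\text{ or } j\in M\}\le \1\{i\in M\}+\1\{j\in M\}$, so no sharpening is needed.

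The sharpening you suggest is also false. For $a=b$ one has $c_{ij}=c_{ji}$, so $\gamma_{ij}=2c_{ij}\in\{-2,0,2\}$ and every active pair has $|\gamma_{ij}|=2$. It is only for $a\neq b$ that $|\gamma_{ij}|\le 1$ always holds. Had your constant genuinely overshot $12$, that argument could not have rescued it.
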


\begin{proof}
The proof can be found in Section \ref{proof_propB.2}.
\end{proof}

We now strengthen the previous bound to obtain a uniform control over all community assignments.

\begin{prop}
\label{prop7.7}
Let $(\mathbf{Z}_n,\mathbf{A}_{n \times n}) $ be generated from a DC-SBM with parameters $(k,\pi, \rho_nS, \boldsymbol{\theta})$. For $W_{ab}(\mathbf{e}_n,\mathbf{z}_n)$ defined as in \eqref{def_W}, and for $\rho_n\geq \log n/n$, there exists a constant $c>0$, only depending on $S$, such that, given $\mathbf{Z}_n=\mathbf{z}_n$
$$
 |W_{ab}(\mathbf{e}_n,\mathbf{z}_n) | \leq  \frac{c \rho_n m(\mathbf{e}_n,\mathbf{z}_n)}{n},
$$
simultaneously for all $a,b \in [k]$ and $\mathbf{e}_n \in[k]^n$, eventually almost surely as $n \to \infty$.
Moreover, there exists a constant $c' > 0$, depending only on $S$, such that
\begin{equation*}
\left| \frac{o_{ab}(\mathbf{e}_n)}{n^2} - \frac{o_{ab}(\mathbf{z}_n)}{n^2} \right| \leq \frac{c' \rho_n m(\mathbf{e}_n, \mathbf{z}_n)}{n},
\end{equation*}
simultaneously for all $a, b \in [k]$ and $\mathbf{e}_n \in [k]^n$, almost surely for all sufficiently large $n$.
\end{prop}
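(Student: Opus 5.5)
We will obtain Proposition~\ref{prop7.7} from the tail bound of Proposition~\ref{propB.2} by a union bound over labelings, stratified by the value of the discrepancy $m=m(\mathbf{e}_n,\mathbf{z}_n)$. First observe that if $\mathbf{e}_n=\mathbf{z}_n$ (up to the relabeling implicit in $m$), then $W_{ab}=0$ and there is nothing to prove. So fix an integer-type level: since $m$ is a $\btheta$-weighted count, we group labelings according to the number $\ell=\#\{i: e_i\neq z_i\}\in\{1,\dots,n\}$ of disagreeing nodes. The number of labelings with exactly $\ell$ disagreements is at most $\binom{n}{\ell}(k-1)^\ell\le (nk)^\ell$.

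For a labeling in this class, apply Proposition~\ref{propB.2} with $x=c\rho_n m/n$:
\[
\mathbb{P}\bigl(|W_{ab}|>c\rho_n m/n\mid\mathbf{z}_n\bigr)\le 2\exp\bigl(-(c-12s_{\max})\rho_n n\, m\bigr).
\]
To compare $m$ with $\ell$ we need a lower bound $m\ge \theta_{\min}\ell$. This is the delicate step: the bound must survive even for the least connected nodes. I would use the standing assumption (implicit in the framework) that the $\theta_i$ are bounded below by a positive constant, so $m\ge\theta_{\min}\ell$. With $\rho_n\ge \log n/n$, i.e.\ $\rho_n n\ge\log n$, the probability above is at most $2\exp(-(c-12s_{\max})\theta_{\min}\ell\log n)=2n^{-(c-12s_{\max})\theta_{\min}\ell}$. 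Summing over the $k^2$ pairs $(a,b)$, the at most $(nk)^\ell$ labelings at level $\ell$, and $\ell\ge1$ gives
\[
\sum_{\ell\ge1}2k^2 (nk)^\ell n^{-(c-12s_{\max})\theta_{\min}\ell}\le 4k^{3}n^{1-(c-12s_{\max})\theta_{\min}}
\]
for large $n$, once $c$ is large enough that the exponent gap is positive. Choosing $c$ (depending only on $S$ and $\theta_{\min}$) so that $(c-12s_{\max})\theta_{\min}\ge 3+\log_n k$, say, makes this $O(n^{-2})$, which is summable in $n$, and Borel--Cantelli yields the first claim eventually almost surely. The main obstacle is exactly this balancing: the entropy $\ell\log(nk)$ must be beaten by $\rho_n n\, m$, which requires both $\rho_n n\gtrsim\log n$ and a uniform lower bound on the $\theta_i$ so that $m$ is not much smaller than $\ell$; I would state the latter explicitly if needed.

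For the second claim, write
\[
\frac{o_{ab}(\mathbf{e}_n)-o_{ab}(\mathbf{z}_n)}{n^2}=W_{ab}+\frac{\mathbb{E}[o_{ab}(\mathbf{e}_n)\mid\mathbf{z}_n]-\mathbb{E}[o_{ab}(\mathbf{z}_n)\mid\mathbf{z}_n]}{n^2}.
\]
By \eqref{E_oab} the deterministic part equals $\rho_n[RSR^\intercal-R_0SR_0^\intercal]_{ab}$ with $R=R(\mathbf{e}_n,\mathbf{z}_n)$, $R_0=R(\mathbf{z}_n,\mathbf{z}_n)$. Expanding $RSR^\intercal-R_0SR_0^\intercal=(R-R_0)SR^\intercal+R_0S(R-R_0)^\intercal$ and using $\|R\|_1=\|R_0\|_1=1$ and Lemma~\ref{lemma7.1}, $\|R-R_0\|_1=2m/n$, each entry is bounded by $4s_{\max}m/n$. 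Combining with the first part gives the claim with $c'=c+4s_{\max}$.
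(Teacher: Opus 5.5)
Your proof follows the paper's route: Proposition~\ref{propB.2} with $x=c\rho_n m/n$, a union bound stratified by the number of misclassified nodes, Borel--Cantelli using $\rho_n n\ge\log n$, and for the second claim the triangle inequality with a deterministic $O(s_{\max}\rho_n m/n)$ bound on the difference of conditional expectations. Your constant $4s_{\max}$, obtained via Lemma~\ref{lemma7.1}, is fine.

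The one place you depart from the paper is worth keeping. The paper counts the labelings with $m(\mathbf e_n,\mathbf z_n)=m$ by $\binom nm(k-1)^m$ and sums over integers $m\in[n]$. In doing so it silently identifies the $\btheta$-weighted $m$ with the integer number $\ell$ of misclassified nodes. Your explicit step $m\ge\theta_{\min}\ell$ is exactly what that union bound needs, and the hypothesis is not cosmetic. Flipping a single node $i$ gives $m=\theta_i$ and turns $n^2W_{ab}$ into a centered sum over the $i$-th row of $\mathbf A$ whose mean is of order $\rho_n n\theta_i$. Take $\rho_n=\log n/n$ and $\theta_i=\theta_0$ on a positive fraction of nodes. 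Then, in expectation, roughly $n^{1-\kappa\theta_0}$ single flips violate the bound, with $\kappa$ depending on $c$, $S$ and $\pi$. So no $c$ depending only on $S$ can work uniformly in $\btheta$. You should therefore state the lower bound on the $\theta_i$ as a hypothesis and let $c$ depend on $\theta_{\min}$, as you propose.

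Finally, drop the phrase ``up to the relabeling implicit in $m$''. For $\mathbf e_n=\sigma(\mathbf z_n)$ with $\sigma\neq\mathrm{id}$, one has $W_{ab}\neq 0$ in general. So here, as in the proof of Proposition~\ref{propB.2}, $m$ must be the unpermuted sum $\sum_i\theta_i\1\{e_i\neq z_i\}$. That reading is also what makes $m\ge\theta_{\min}\ell$ true.
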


\begin{proof}
The proof can be found in Section \ref{proof_prop7.7}.
\end{proof}

\subsection{Control of the likelihood difference}

We now combine the previous concentration results to control the difference of the likelihood criterion evaluated at an arbitrary labeling and at the true labeling.
To establish this, we introduce the event $\mathcal{C}$
\begin{align}\label{def_C}
\mathcal{C} & = \left\{ \sup_{\mathbf{e}_n \in [k]^n} \sup_{a,b \in [k]} \left| \frac{o_{ab}(\mathbf{e}_n)}{n^2} - \frac{\mathbb{E} [o_{ab}(\mathbf{e}_n)\mid \mathbf{Z}_n]}{n^2} \right| \leq \sqrt{\frac{8 s_{\max} \rho_n \log k}{n^2}} \right\}\\
& = \left\{ \sup_{\mathbf{e}_n \in [k]^n} \sup_{a,b \in [k]} \left| \frac{o_{ab}(\mathbf{e}_n)}{n^2} -\rho_n [R(\mathbf{e}_n,\mathbf{Z}_n)SR(\mathbf{e}_n,\mathbf{Z}_n)] \right| \leq \sqrt{\frac{8 s_{\max} \rho_n \log k}{n^2}} \right\}, \nonumber
\end{align}
where $s_{\max}$ denotes the maximum entry of the matrix $S$.

\begin{theorem}
\label{thm7.10}
Let $(\mathbf{Z}_n,\mathbf{A}_{n \times n}) $ be generated from a DC-SBM with parameters $(k,\pi, \rho_nS, \boldsymbol{\theta})$ with $\rho_n \geq  \log n/n$. Let $\mathbf{z}_n\in [k]^n$ and $\mathbf{e}_n \in \mathcal F(n,\alpha)$ with $\alpha<\min_{a} \pi_a$. For all $ \epsilon >0$ sufficiently small, if $m=m(\mathbf{e}_n,\mathbf{z}_n)\leq  \epsilon n $ and $n$ sufficiently large, then
\begin{align*}
\mathbb P \left(  \left\{ X(\mathbf{e}_n,\mathbf{z}_n)  - X(\mathbf{z}_n,\mathbf{z}_n) > x \right\} \cap \mathcal C \mid  \mathbf z_n\right) \;\leq\;  \exp\Bigl[  - \sup_{t\in(0,1 ]} \Bigl\{ n^2(xt-  \rho_n  C_t(R,S))\Bigr\} +\phi(n,m) \Bigr]\,,
\end{align*} 
with $\mathcal C$ defined in \eqref{def_C}, $C_t(R,S)$ given by \eqref{tildeC}, and with $\phi(n,m) = C(\rho_n m^2 +\rho_n n m +1)$
and $C$ a constant depending only on $S$ and $k$.
\end{theorem}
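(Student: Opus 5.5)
The plan is to reduce the bound on $X(\mathbf{e}_n,\mathbf{z}_n)-X(\mathbf{z}_n,\mathbf{z}_n)$ to Proposition~\ref{prop7.9} by linearizing the profile likelihood around the true labeling. Write $Q(\mathbf{e}_n)=\frac{1}{n^2}\sum_{a,b} f(o_{ab}(\mathbf{e}_n),D_a(\mathbf{e}_n),D_b(\mathbf{e}_n))$ with $f(o,D,D')=o\log(o/(DD'))$. Similarly, $H_{S,n}(R(\mathbf{e}_n,\mathbf{z}_n))$ is the same functional evaluated at the conditional expectations.

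\textbf{Step 1: first-order expansion.} Expand $f$ at the random point to first order around the expected point. Since $\partial_o f=\log(o/(DD'))+1$ and $\partial_D f=-o/D$, the constant terms and the $D$-derivative terms telescope, using $D_a=\sum_b o_{ab}$. The difference $X(\mathbf{e}_n,\mathbf{z}_n)$ then becomes
\[
\frac{1}{n^2}\sum_{a,b}\bigl(o_{ab}-\mathbb{E}o_{ab}\bigr)\Bigl[\log\mathbb{E}o_{ab}-2\log \mathbb{E}D_a\Bigr]+\text{remainder},
\]
with the expectations taken for $\mathbf{e}_n$. The terms from differentiating with respect to $D$ give exactly $-\sum_a (D_a-\mathbb{E}D_a)\,\mathbb{E}D_a/\mathbb{E}D_a$ contributions. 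These cancel against the "$+1$" terms, because $\sum_{a,b}(o_{ab}-\mathbb{E}o_{ab})=\sum_a(D_a-\mathbb{E}D_a)$.

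Doing the same at $\mathbf{z}_n$ and subtracting gives $W(\mathbf{e}_n,\mathbf{z}_n)$ of Proposition~\ref{prop7.9}, plus two error terms:
\begin{itemize}
\item[(i)] the replacement of the logarithmic weights at $R(\mathbf{e}_n,\mathbf{z}_n)$ by those at $R(\mathbf{z}_n,\mathbf{z}_n)$;
\item[(ii)] the second-order Taylor remainders.
\end{itemize}

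\textbf{Step 2: control of term (i).} On $\mathcal F(n,\alpha)$ the weights are bounded. Their difference is $O(\|R(\mathbf{e}_n,\mathbf{z}_n)-R(\mathbf{z}_n,\mathbf{z}_n)\|_1)=O(m/n)$ by Lemma~\ref{lemma7.1}. This difference multiplies $(o_{ab}(\mathbf{e}_n)-\mathbb{E}o_{ab}(\mathbf{e}_n))/n^2$. Splitting that factor as $W_{ab}$ plus the $\mathbf{z}_n$-deviation, two bounds apply:
\begin{itemize}
\item Proposition~\ref{prop7.7} gives $|W_{ab}|\le c\rho_n m/n$;
\item on $\mathcal C$ the $\mathbf{z}_n$-deviation is $O(\sqrt{\rho_n}/n)$.
\end{itemize}
Hence this term is $O(\rho_n m^2/n^2+\sqrt{\rho_n}\,m/n^2)$. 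After multiplying by $n^2$ (the scale in the exponent), this is absorbed in $\phi(n,m)=C(\rho_n m^2+\rho_n nm+1)$, since $\sqrt{\rho_n}m\le \rho_n n m+1$ when $n\rho_n\ge\log n$.

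\textbf{Step 3: control of term (ii).} The Taylor remainders are where I expect the main difficulty. The remainder of $f$ is quadratic in the deviations divided by $\rho_n n^2$, so individually it is only $O(\rho_n\log k/(n^2\rho_n))=O(1/n^2)$ on $\mathcal C$, thanks to Corollary~\ref{cor_7.5} (entries of order $\rho_n n^2$). This yields the "$+1$" in $\phi$.

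What needs care is that I must bound the \emph{difference} of the remainders at $\mathbf{e}_n$ and at $\mathbf{z}_n$, or else simply bound each by $C/n^2$. I would just bound each separately, giving $2C$ after scaling. The Hessians vary by $O(m/n)$, and the deviations differ by the $W$'s, which gives an additional $O(\rho_n m^2)$ if a sharper bound is wanted.

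\textbf{Step 4: conclusion.} We obtain, on $\mathcal C$, that $X(\mathbf{e}_n,\mathbf{z}_n)-X(\mathbf{z}_n,\mathbf{z}_n)\le W(\mathbf{e}_n,\mathbf{z}_n)+\phi_0(n,m)/n^2$ deterministically. Therefore the event in question is contained in $\{W>x-\phi_0/n^2\}$. Applying Proposition~\ref{prop7.9} with $x-\phi_0/n^2$, the shift contributes $t\phi_0\le\phi_0$ for $t\in(0,1]$. Merging this with $\tilde D\rho_n nm+D\rho_n m^2$ into $\phi(n,m)$ gives the statement.
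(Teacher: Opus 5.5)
Your proposal is correct and has the same skeleton as the paper's proof. Both reduce $X(\mathbf{e}_n,\mathbf{z}_n)-X(\mathbf{z}_n,\mathbf{z}_n)$ to the quantity $W(\mathbf{e}_n,\mathbf{z}_n)$ of Proposition~\ref{prop7.9}, with log-weights frozen at $R(\mathbf{z}_n,\mathbf{z}_n)$. The leftover is a remainder that is deterministically $O\bigl((\rho_n m^2+\rho_n nm+1)/n^2\bigr)$ on $\mathcal C$. Both then shift the threshold and use $t\le 1$.

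The difference is the linearization device.
\begin{itemize}
\item The paper does not Taylor-expand. It applies the tangent-line inequality $x\log x\ge x\log y+x-y$ once to each of the four groups of terms, choosing the base point so that each inequality points the right way. This gives an upper bound that is linear in the deviations, but some logarithms are evaluated at random points, namely $\log(o_{ab}(\mathbf{e}_n)/n^2)$ and $\log(D_a(\mathbf{z}_n)/n^2)$. These are then recentred at $\rho_n[R(\mathbf{z}_n,\mathbf{z}_n)SR(\mathbf{z}_n,\mathbf{z}_n)]$ via $\log(1+u)\le 2|u|$. Your terms (i) and (ii) thereby get merged into a single product-of-deviations term $\widetilde W\le C(\rho_n m^2+1)/n^2$.
\item Your two-sided expansion with a mean-value remainder keeps the weight-replacement error and the curvature error separate, which is more transparent.
\item It can also be run on $\mathcal C$ alone. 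In term (i), the factor $(o_{ab}(\mathbf{e}_n)-\mathbb{E}o_{ab}(\mathbf{e}_n))/n^2$ is already $O(\sqrt{\rho_n}/n)$ on $\mathcal C$. That gives $O(\sqrt{\rho_n}\,m/n^2)$ directly, so you do not need Proposition~\ref{prop7.7}. Its ``eventually almost surely'' event is not part of the probability being bounded, although the paper invokes it anyway.
\end{itemize}

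One bookkeeping correction to Step~1: the ``$+1$'' terms do not cancel within a single labeling.
\begin{itemize}
\item Both $D$-slots of $o\log(o/(DD'))$ contribute $-\sum_a(D_a-\mathbb{E}D_a)$ each. The net constant is therefore $-\frac{1}{n^2}\sum_{a,b}(o_{ab}-\mathbb{E}o_{ab})$, not zero.
\item This vanishes only in the difference $X(\mathbf{e}_n,\mathbf{z}_n)-X(\mathbf{z}_n,\mathbf{z}_n)$. The reason is that the total degree and its conditional expectation $\rho_n n^2 (R^\intercal\mathbf{1})^\intercal S (R^\intercal\mathbf{1})$ depend on $\mathbf{z}_n$ alone.
\item The same remark disposes of the $-\log n^2$ shift in your weights $\log\mathbb{E}o_{ab}-2\log\mathbb{E}D_a$, since $\sum_{a,b}W_{ab}(\mathbf{e}_n,\mathbf{z}_n)=0$.
\end{itemize}
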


\begin{proof}
The proof can be found in Section \ref{proof_thm7.10}.
\end{proof}

\subsection{Variational analysis of the criterion}

This part takes its roots in the work of  \citep{vander}, later adapted by \citep{CL24}.
We now study the local behavior of the population criterion by considering small perturbations of the matrix $R$. The following lemmas provide a first-order expansion and a corresponding lower bound. We first compute the directional derivative of the population criterion.

\begin{lemma} \label{lemma_der}
Let
$$G(\lambda)\;=\;  H_{S,n}\Bigl(  \tDiag(R^\intercal\veco) + \sum_{b\neq b'}\lambda_{bb'}\Delta_{bb'}    \Bigr)\,$$
for some given $k \times k$  matrices $\Delta_{bb'}$ defined by, for $1 \leq b \neq b'\leq k$
$$(\Delta_{bb'})_{b'b'} = -1, \qquad
(\Delta_{bb'})_{bb'} = 1,$$
and
$$(\Delta_{bb'})_{aa'} = 0 \quad \text{for all other entries } (a,a') \in [k]^2.$$
Then we have that 
$$\frac{\partial}{\partial \lambda_{bb'}} G(0)\;=\; - 2\rho_n \left[ \sum_{a} r_{a} KL(S_{ab'}\| S_{ab}) - KL((Sr)_{b'}\|(Sr)_{b})\right],$$
where $KL$ is defined in \eqref{def_KL_poiss} and with $r=[R^\intercal\veco]$.
\end{lemma}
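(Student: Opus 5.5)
The plan is to differentiate $H_{S,n}$ directly at the diagonal point $D:=\tDiag(R^\intercal\veco)=\tDiag(r)$. Along the perturbation $R(\lambda)=D+\sum\lambda_{bb'}\Delta_{bb'}$, only $\Delta:=\Delta_{bb'}$ matters for the partial derivative. Write $M=RSR^\intercal$ and $u=M\veco$. Expanding the logarithm in the definition of $H_{S,n}$ gives
$$\frac{1}{\rho_n}H_{S,n}(R)=\sum_{a,c}M_{ac}\log M_{ac}-2\sum_a u_a\log u_a-\log(\rho_n n^2)\sum_{a,c}M_{ac},$$
using the symmetry of $M$ to split the denominator. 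At $R=D$ we have $M_{ac}=r_ar_cS_{ac}$ and $u_a=r_a(Sr)_a$, all positive. The variations are $dM=\Delta SD+DS\Delta^\intercal$ and $du=dM\,\veco$.

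The key structural observation is that every column of $\Delta$ sums to zero: column $b'$ carries $-1$ at $(b',b')$ and $+1$ at $(b,b')$, and all other columns vanish. Hence $\veco^\intercal\Delta=0$ and $\Delta^\intercal\veco=0$, which has two consequences.
\begin{itemize}
\item $\sum_{a,c}dM_{ac}=2\,\veco^\intercal\Delta S r=0$. Therefore all terms coming from the derivative of $x\log x$ (the ``$+1$'') and from $\log(\rho_n n^2)$ disappear.
\item $du=\Delta Sr+DS\Delta^\intercal\veco=\Delta Sr$, so $du_{b'}=-(Sr)_{b'}$, $du_b=+(Sr)_{b'}$, and $du_a=0$ otherwise.
\end{itemize}

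Next I compute the two remaining pieces. By symmetry, $\sum_{a,c} dM_{ac}\log M_{ac}=2\sum_{a,c}(\Delta SD)_{ac}\log M_{ac}$. Only rows $b$ and $b'$ of $\Delta SD$ are nonzero, equal to $\pm S_{b'c}r_c$, so this piece equals
$$2\sum_c r_cS_{cb'}\Bigl[\log\tfrac{r_b}{r_{b'}}+\log\tfrac{S_{cb}}{S_{cb'}}\Bigr].$$
The degree piece equals
$$-2(Sr)_{b'}\Bigl[\log\tfrac{r_b}{r_{b'}}+\log\tfrac{(Sr)_b}{(Sr)_{b'}}\Bigr].$$
The $\log(r_b/r_{b'})$ contributions cancel, since $\sum_c r_cS_{cb'}=(Sr)_{b'}$. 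What remains is
$$\frac{\partial G}{\partial\lambda_{bb'}}(0)=2\rho_n\Bigl[\sum_c r_cS_{cb'}\log\tfrac{S_{cb}}{S_{cb'}}+(Sr)_{b'}\log\tfrac{(Sr)_{b'}}{(Sr)_b}\Bigr].$$

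To finish, I expand the Poisson KL divergences from \eqref{def_KL_poiss}:
$$\sum_c r_c\,KL(S_{cb'}\|S_{cb})=-\sum_c r_cS_{cb'}\log\tfrac{S_{cb}}{S_{cb'}}+(Sr)_b-(Sr)_{b'},$$
$$KL\bigl((Sr)_{b'}\|(Sr)_b\bigr)=(Sr)_{b'}\log\tfrac{(Sr)_{b'}}{(Sr)_b}+(Sr)_b-(Sr)_{b'}.$$
In the difference the linear terms cancel, and the bracket above is exactly $-\bigl[\sum_a r_aKL(S_{ab'}\|S_{ab})-KL((Sr)_{b'}\|(Sr)_b)\bigr]$, which gives the stated formula. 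Because $S$ is symmetric, $S_{cb'}=S_{b'c}$, which is used throughout.

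There is no real obstacle here; the only delicate point is bookkeeping. One must check that the column-sum-zero property simultaneously kills the constant terms and the $DS\Delta^\intercal\veco$ contribution to $du$, and that the $\log r$ terms cancel. Differentiability holds because all entries of $M$ and $u$ are strictly positive at $D$, since $r_a>0$ and $s_{\min}>0$.
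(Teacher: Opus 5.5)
Your proof is correct and takes essentially the same route as the paper: you differentiate $H_{S,n}$ to first order at $\tDiag(r)$ and then regroup the result into Poisson KL divergences. The paper computes the full entrywise gradient $\partial H_{S,n}/\partial R_{uv}$ at the diagonal point and takes $\partial_{uv}-\partial_{vv}$, which cancels the $-2\rho_n(Sr)_v$ terms and the $r$-factors inside the logarithm, whereas you take the directional derivative along $\Delta_{bb'}$ and let its zero column sums produce the same cancellations, a purely bookkeeping difference.
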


\begin{proof}
The proof can be found in Section \ref{proof_lemma_der}.
\end{proof}

The previous result allows us to control the variation of the criterion and derive the following lower bound.

\begin{lemma}
 \label{lemma7.3} 
Let $S$ be a fixed symmetric matrix with strictly positive entries, and let $\rho_n=1$ or $\rho_n \rightarrow 0$ as $n\rightarrow \infty$. For all $\epsilon >0$ and for all $R$ with $\|R-\tDiag(\pi)\|_1 < \delta$ for $\delta$ sufficiently small, then   
\begin{align*}
H_{S,n}(\tDiag(R^\intercal \veco)) -  &H_{S,n}(R)\\
& \geq 2 \rho_n \sum_{b\neq b'}\left( \sum_{a} [R^\intercal \veco]_{a} KL(S_{ab'}\| S_{ab}) -  KL([SR^\intercal \veco]_{b'}\|[SR^\intercal \veco]_{b}) - \epsilon\,\right)R_{b'b},
\end{align*}
with $KL$ defined in \eqref{def_KL_poiss}. 
\end{lemma}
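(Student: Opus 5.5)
The plan is to write $R$ as a perturbation of its diagonal part along the directions $\Delta_{bb'}$ of Lemma~\ref{lemma_der}. Set $r = R^\intercal\veco$ and $D_r = \tDiag(r)$. Every $R$ with column sums $r$ can be written as
\[
R = D_r + \sum_{b\neq b'} \lambda_{bb'}\Delta_{bb'}, \qquad \lambda_{bb'} = R_{bb'} .
\]
Indeed, $\Delta_{bb'}$ moves mass $1$ from entry $(b',b')$ to entry $(b,b')$ inside column $b'$, so the column sums are preserved. Thus $H_{S,n}(R) = G(\lambda)$ with $\lambda_{bb'} = R_{bb'} \ge 0$, and $H_{S,n}(D_r) = G(0)$. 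The target bound is therefore a lower bound on $G(0) - G(\lambda)$. This is the standard argument of \citep{vander}, as adapted in \citep{CL24}.

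First, I apply the mean value theorem (or Taylor's formula with remainder) along the segment $s\mapsto s\lambda$, $s\in[0,1]$:
\[
G(0) - G(\lambda) = -\sum_{b\neq b'} \lambda_{bb'}\,\partial_{\lambda_{bb'}} G(\bar s\lambda)
\]
for some $\bar s\in(0,1)$. Lemma~\ref{lemma_der} gives the value of the derivative at $0$, namely
\[
-\partial_{\lambda_{bb'}}G(0) = 2\rho_n\Bigl[\sum_a r_a KL(S_{ab'}\|S_{ab}) - KL((Sr)_{b'}\|(Sr)_{b})\Bigr].
\]
It remains to show that the gradient at $\bar s\lambda$ is within $2\rho_n\epsilon$ of the gradient at $0$, uniformly in the coordinates. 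Once this holds, summing against the weights $\lambda_{bb'} = R_{bb'}$ gives the stated inequality. The index $R_{b'b}$ in the statement versus $R_{bb'}$ here is only a matter of labelling, because the sum runs over all ordered pairs $b\neq b'$.

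Second, I prove this uniform closeness. I factor out $\rho_n$ as in Lemma~\ref{lemma7.2}. Then $H_{S,n}(M)/\rho_n$ equals $G_S(M)$ plus a term of the form $-\log(\rho_n n^2)\sum_{a,b}[MSM^\intercal]_{ab}$. This extra term depends only on the column sums of $M$, since
\[
\sum_{a,b}[MSM^\intercal]_{ab} = r^\intercal S r .
\]
It is therefore constant along the segment and contributes nothing to the derivatives. Hence $\partial G/\rho_n$ is the gradient of the $n$-independent smooth function $G_S$, evaluated at $D_r + \sum\mu_{bb'}\Delta_{bb'}$.

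On a neighbourhood of $\tDiag(\pi)$, the entries of $MSM^\intercal$ and $MSM^\intercal\veco$ are bounded below by a positive constant, because $\pi_a>0$ and $s_{\min}>0$. So $G_S$ is $C^1$, indeed $C^\infty$, on a compact neighbourhood, and its gradient is uniformly continuous there. Now suppose $\|R-\tDiag(\pi)\|_1<\delta$. Then both $D_r$ and every point $D_r + s\sum\lambda_{bb'}\Delta_{bb'}$ on the segment lie within $O(\delta)$ of $\tDiag(\pi)$, since $\sum_{b\neq b'}\lambda_{bb'} \le \delta$ and $\|r-\pi\|_1\le\delta$. Uniform continuity then gives
\[
\bigl|\partial_{\lambda_{bb'}}G(\bar s\lambda) - \partial_{\lambda_{bb'}}G(0)\bigr| \le 2\rho_n\epsilon
\]
for $\delta$ small enough depending on $\epsilon$. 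This is the required bound.

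The main obstacle is making sure that the $n$-dependent factor $\log(\rho_n n^2)$ really drops out of the derivatives, so that the continuity modulus is independent of $n$ and of $\rho_n$. The cancellation rests on the $\Delta_{bb'}$ preserving column sums. If it failed, the directional derivative would include an unbounded $\log(\rho_n n^2)$ term and the uniform-continuity step would break. I would verify this identity first, using the expansion of $H_{S,n}$ already employed in Lemma~\ref{lemma7.2}. After that the argument is routine compactness.
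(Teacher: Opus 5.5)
Your approach is the paper's own. The paper simply defers to Lemma C.4 of \citep{CL24}, which is this same argument: a mean-value expansion of $G$ along the directions $\Delta_{bb'}$, the derivative at $0$ from Lemma~\ref{lemma_der}, and continuity of the derivative near $\tDiag(\pi)$. Your decomposition $R=\tDiag(r)+\sum_{b\neq b'}R_{bb'}\Delta_{bb'}$ is correct, and so is the uniform-continuity step. Your observation that $-\log(\rho_n n^2)\,r^\intercal S r$ is constant along the segment, so that $\delta$ does not depend on $n$ or $\rho_n$, is also correct and more explicit than the paper.

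The one wrong step is your closing remark that $R_{b'b}$ versus $R_{bb'}$ ``is only a matter of labelling''. Exchanging the names $b$ and $b'$ also exchanges the arguments of both Kullback--Leibler terms, and $KL$ is not symmetric. Write $c_{bb'}=\sum_a r_a KL(S_{ab'}\|S_{ab})-KL((Sr)_{b'}\|(Sr)_b)$. Then $\sum_{b\neq b'}c_{bb'}R_{b'b}$ and $\sum_{b\neq b'}c_{bb'}R_{bb'}$ differ in general.

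What your argument actually proves is the inequality with weight $R_{bb'}$, and that is the right one. Here $\lambda_{bb'}$ is the coefficient of $\Delta_{bb'}$, whose $+1$ sits at entry $(b,b')$. This $R_{bb'}$ form is also exactly what the paper uses later through $\widetilde C(R,S)$ in the proof of Theorem~\ref{thm_strong}.

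The printed $R_{b'b}$ is a typo, and with it the inequality is false. Take $k=2$, $\pi=(1/2,1/2)$, $S=\begin{pmatrix}1&1\\1&4\end{pmatrix}$, and $R$ with a single small off-diagonal entry $R_{21}=\eta$ and column sums $\pi$. The left side is $2\rho_n c_{21}\eta+O(\eta^2)$ with $c_{21}\approx 0.223$. The printed right side is $2\rho_n(c_{12}-\epsilon)\eta$ with $c_{12}\approx 0.482$. At $\eta=0.01$ the left side is about $0.0044\rho_n$, while the printed right side is about $(0.0096-0.02\epsilon)\rho_n$.

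So do not claim equivalence. State that you prove, and that the paper needs, the version with $R_{bb'}$.
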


\begin{proof}
The proof follows the same argument as Lemma C.4 in \citep{CL24}, substituting the derivative of the function $G$ defined in Lemma \ref{lemma_der} with the specific expression established therein.
\end{proof}

\subsection{Proof of Theorem \ref{thm_strong} (strong consistency)} \label{sec_proof_strong}

Let $m=m(\hat{\mathbf{z}}_n,\mathbf{Z}_{n} )$ and $\hat R = R(\hat{\mathbf{z}}_n,\mathbf{Z}_{n})$. Take $\delta>0$ such that 
$$m\;=\;  \frac{n}{2}\,\|\tDiag(\hat R^\intercal \mathbf 1) - \hat R\|_1  \;\leq \; \delta n\,.$$
Observe that this event holds with increasing probability by Theorem~\ref{thm_weak}, so it is enough to prove that $m$ does not belong to the interval $(0, \delta n]$, with probability converging to 1. 
By the strong law of large numbers, we also have that 
$$\|\tDiag(\hat R^\intercal \veco) - \tDiag(\pi)\|_1 \;\leq \; \delta ,$$
eventually almost surely as $n\to\infty$. Therefore the above facts imply that 
$$\|\hat R  - \tDiag(\pi)\|_1 \;\leq \; 2\delta,$$
with probability converging to 1 as $n\to\infty$. For any $\epsilon >0$, if $\delta$ is taken sufficiently small, by Lemma~\ref{lemma7.3}, we have that 
\begin{equation}\label{mainineq}
\begin{split}
H_{S,n}(\tDiag(\hat R^\intercal \veco)) -  H_{S,n}(\hat R) \;&\geq \; \rho_n \widetilde C(\hat R,S) ,
\end{split}
\end{equation}
with 
\[
\widetilde C(R, S) = 2 \sum_{b\neq b'} \left( \sum_{a}[R^\intercal \veco]_{a} KL(S_{ab'}\| S_{ab}) -  KL([SR^\intercal \veco]_{b'}\|[SR^\intercal \veco]_{b}) - \epsilon\ \right)R_{bb'} \,,
\]
for $KL$ defined as in \eqref{def_KL_poiss}. 
Moreover, since $\alpha< \min_{a}\pi_a$ we have that
\begin{equation}\label{eq378}
Q(\hat{\mathbf{z}}_n)  - Q(\mathbf{Z}_n)   \;\geq\; 0,
\end{equation}
eventually almost surely as $n\to\infty$. Then by summing 
\eqref{mainineq} and \eqref{eq378} 
 we must have, by \eqref{defX}, that 
\begin{equation*}
 X(\hat{\mathbf{z}}_n,\mathbf{Z}_n)  - X(\mathbf{Z}_n,\mathbf{Z}_n) \;\geq \;  \rho_n  \widetilde C (\hat R,S) ,
\end{equation*}
eventually almost surely as $n\to\infty$.
We now define, for an arbitrary small $\eta>0$, the event
$$
\mathcal B(\eta) = \left\{\sup_{a\in [k]} \left| \frac{n_a(\mathbf{Z}_n)}{n}-\pi_a\right|< \eta \right\}.$$
We know by the strong law of large numbers, that $\mathcal B(\eta) $ holds, for any $\eta >0$, eventually almost surely as $n \to \infty$. Thus in the remaining of the proof, we will assume that $\mathbf{Z}_n \in \mathcal B(\eta) $. On this set we will prove that
$$ \mathbb P \left( \left\{  X(\hat{\mathbf{z}}_n,\mathbf{Z}_n)  - X(\mathbf{Z}_n,\mathbf{Z}_n) \geq \rho_n \widetilde C(\hat R,S) \right\} \cap \left\{ m(\hat{\mathbf{z}}_n,\mathbf{Z}_n) \in (0,\delta n]\right\} \right) \to 0,$$ 
as $n \to \infty$, implying that $m(\hat{\mathbf{z}}_n,\mathbf{Z}_n)=0$ with probability converging to 1 as $n\to\infty$. 
Let $\mathbf{Z}_n \in \mathcal B(\eta) $ and let $\mathbf{e}_n \in \mathcal F(n,\alpha)$ with $m(\mathbf{e}_n,\mathbf{Z}_n) \leq \delta n$. Conditioning on the event $\mathbf{Z}_n=\mathbf{z}_n$ and applying a union bound over the possible values of $m$ and $\mathbf{e}_n$, by noting $R = R(\mathbf{e}_n,\mathbf{Z}_{n})$, we obtain
\begin{align}\label{eq_P}
& \mathbb P \left( \left\{  X(\hat{\mathbf{z}}_n,\mathbf{Z}_n)  - X(\mathbf{Z}_n,\mathbf{Z}_n) \geq \rho_n \widetilde C(\hat R,S) \right\} \cap \left\{ m(\hat{\mathbf{z}}_n,\mathbf{Z}_n) \in (0,\delta n]\right\} \right) \nonumber\\\nonumber
&\leq \sum_{\mathbf{z}_n} \mathbb P \left(  \sup_{m\in[1, \delta n]} \sup_{\mathbf{e}_n : m(\mathbf{e}_n,\mathbf{z}_n)=m   } X(\mathbf{e}_n,\mathbf{z}_n)  - X(\mathbf{z}_n,\mathbf{z}_n) > \rho_n \widetilde C(R,S)  \mid \mathbf Z_n= \mathbf z_n\right) \mathbb P (\mathbf Z_n= \mathbf z_n) \\ 
&\leq \sum_{\mathbf{z}_n} \sum_{m \in [1,\delta n ]}\sum_{\mathbf{e}_n :m(\mathbf{e}_n,\mathbf{z}_n)=m } \mathbb P \left(  \left\{ X(\mathbf{e}_n,\mathbf{z}_n)  - X(\mathbf{z}_n,\mathbf{z}_n) > \rho_n \widetilde C(R,S) \right\} \cap \mathcal C \mid \mathbf Z_n= \mathbf z_n\right) \mathbb P (\mathbf Z_n= \mathbf z_n) \nonumber \\
& \hspace{5cm}+ \mathbb P (\mathcal C ^c),
\end{align}
with $\mathcal C$ defined in \eqref{def_C}. The event $\mathcal C$ holds with probability converging to one as $n \to \infty$ by Theorem \ref{thm_7.4}.
Then, if $\delta$ is sufficiently small, for any $\mathbf e_n$ with $m=m(\mathbf{e}_n,\mathbf{Z}_n) \leq \delta n$, by Theorem~\ref{thm7.10} we have that
\begin{align*}
&\mathbb P \left(  \left\{ X(\mathbf{e}_n,\mathbf{z}_n)  - X(\mathbf{z}_n,\mathbf{z}_n) > \rho_n \widetilde C(R,S) \right\} \cap \mathcal C \mid \mathbf Z_n= \mathbf z_n\right) \\
&\;\leq\; \exp\Bigl[ - \sup_{t\in(0,1 ]} \Bigl\{  n^2 \rho_n (t \widetilde C(R,S) -  C_t(R,S))\Bigr\} + \phi(n,m) \Bigr],
\end{align*}
with $C_t(R,S)$  given by \eqref{tildeC}, and $\phi(n,m)=C(\rho_nm^2+\rho_nm+1)$. Remark that $\phi(n,m)\leq  \epsilon' \rho_n m n$, for any $\epsilon' > 0$ and for $n$ large enough. So we obtain
\begin{align*}
&\mathbb P \left(  \left\{ X(\mathbf{e}_n,\mathbf{z}_n)  - X(\mathbf{z}_n,\mathbf{z}_n) > \rho_n \widetilde C(R,S) \right\} \cap \mathcal C \mid \mathbf Z_n= \mathbf z_n\right) \\
&\;\leq\; \exp\Bigl[ - \sup_{t\in(0,1 ]} \Bigl\{  n^2 \rho_n (t \widetilde C(R,S) -  C_t(R,S))\Bigr\} + \epsilon' \rho_n m n  \Bigr].
\end{align*}
Recall $K_t(p \| q) = p^t q^{1-t} - t q \log(p/q) - q$. Observe that for $t\in(0,1 ]$, we can write
\begin{align*}
t \widetilde C(R,S) -  C_t(R,S)=&2t \sum_{b\neq b'} \left( \sum_{a}[R^\intercal \veco]_{a} KL(S_{ab'}\| S_{ab}) -  KL([SR^\intercal \veco]_{b'}\|[SR^\intercal \veco]_{b}) - \epsilon\ \right)R_{bb'}  \\
& \qquad -\left[ \sum_{a}\sum_{b\neq b'} [R^\intercal \mathbf 1]_a  K_t(S_{ab} \| S_{ab'})R_{bb'}     \right]\nonumber \\
=&  \sum_{b\neq b'} \Biggl[  \sum_{a}[R^\intercal \veco]_{a}  \left( tKL(S_{ab'} \| S_{ab}) - K_t(S_{ab} \| S_{ab'}) \right)+ t\sum_{a}[R^\intercal \veco]_{a} KL(S_{ab'}\| S_{ab}) \\
&\qquad- 2t  KL([SR^\intercal \veco]_{b'}\|[SR^\intercal \veco]_{b})  -2t\epsilon \Biggr] R_{bb'}  \nonumber \\
=& \sum_{b\neq b'} \Biggl[
\sum_{a}[R^\intercal \veco]_{a} H_t(S_{ab'} \| S_{ab})
+ t\sum_{a}[R^\intercal \veco]_{a} KL(S_{ab'}\| S_{ab}) \\
&\qquad- 2t KL([SR^\intercal \veco]_{b'}\|[SR^\intercal \veco]_{b})
\Biggr] R_{bb'} - 2t\epsilon \frac m n , \nonumber \\
\end{align*}
with $H_t(p\| q) = tKL(p\|q) -K_t(q\|p)=(1-t) p + t q - p^{1-t}q^{t}\,$, and by using $\sum_{b\neq b'} R_{bb'}=m(\mathbf{e}_n,\mathbf{z}_n)/n$.
By using the log sum inequality and the fact that $[SR^\intercal \veco]_{b}= \sum_ a S_{ab} [R^\intercal \veco]_{a}$, we obtain that
$$  \sum_{a}[R^\intercal \veco]_{a} KL(S_{ab'}\| S_{ab}) \geq  KL([SR^\intercal \veco]_{b'}\|[SR^\intercal \veco]_{b}).$$
Therefore
\begin{align*}
t \widetilde C(R,S) -  C_t(R,S)\nonumber 
  \geq & \sum_{b\neq b'} \Biggl[  \sum_{a}[R^\intercal \veco]_{a}  H_t(S_{ab'} \| S_{ab}) -  t  KL([SR^\intercal \veco]_{b'}\|[SR^\intercal \veco]_{b}) \Biggr] R_{bb'} -2 t\epsilon \frac m n  \nonumber \\
 \geq& \left(\min_{b\neq b'} \left[ \sum_a [R^\intercal \veco]_{a} H_t(S_{ab'} \| S_{ab}) -t   KL([SR^\intercal \veco]_{b'}\|[SR^\intercal \veco]_{b}) \right]  - 2t\epsilon \right)\frac mn \nonumber,
\end{align*}
where we used again $\sum_{b\neq b'} R_{bb'}=m(\mathbf{e}_n,\mathbf{z}_n)/n$.
Since \(\mathbf{Z}_n \in \mathcal{B}(\eta)\), we have \([R^\intercal \mathbf{1}]_a \to \pi_a\), so we obtain
$$ \sup_{t\in(0,1]}  t \widetilde C(R,S) -  C_t(R,S) \geq \left( C(\pi,S)- 2\epsilon \right) \frac mn ,$$
with
\begin{equation*}
C(\pi, S) \;=\;   \min_{b\neq b'}\; \sup_{t\in [0,1]}  \left[ \sum_a \pi_{a} H_t(S_{ab'} \| S_{ab}) -t   KL([S\pi]_{b'}\|[S\pi]_{b}) \right]  \,.
\end{equation*}
By Lemma \ref{lemma_C_positif}, $C(\pi, S) >0$ when $S$ is full rank.
Therefore 
\begin{equation} \label{bound_P} 
\mathbb P \left(  \left\{ X(\mathbf{e}_n,\mathbf{z}_n)  - X(\mathbf{z}_n,\mathbf{z}_n) > \rho_n \widetilde C(R,S) \right\} \cap \mathcal C \mid \mathbf Z_n= \mathbf z_n\right) \leq \exp \left[ -(C(\pi,S)-\epsilon)\rho_n n m \right],
\end{equation}
with $\epsilon$ arbitrarily small. The number of assignments $\mathbf{e}_n \in [k]^n$ such that 
$m(\mathbf{e}_n,\mathbf{z}_n)=m$ is given by
\[
\binom{n}{m}(k-1)^m,
\]
since one must choose the $m$ misclassified node indices among $n$, and assign to each a label different from the true one. Using the standard bound $\binom{n}{m} \le (en/m)^m$, this yields
\[
\binom{n}{m}(k-1)^m \le \left(\frac{en(k-1)}{m}\right)^m.
\]
We use this and the bound \eqref{bound_P} in \eqref{eq_P} to finally obtain that 
\begin{align}\label{fin}
& \mathbb P \left( \left\{  X(\hat{\mathbf{z}}_n,\mathbf{Z}_n)  - X(\mathbf{Z}_n,\mathbf{Z}_n) \geq \rho_n \widetilde C(\hat R,S) \right\} \cap \left\{ m(\hat{\mathbf{z}}_n,\mathbf{Z}_n) \in (0,\delta n]\right\} \right) \nonumber\\\nonumber
&\leq \sum_{m=1}^{\delta n} \left( \frac{en(k-1)}{m} \right)^m \exp \left\{ -[C(\pi,S)-\epsilon]\rho_n n m \right\} + \mathbb P \left( \mathcal C ^c\right) \\
&\leq \sum_{m=1}^{\delta n} \exp \left( -m \{[C(\pi,S)-\epsilon]\rho_n n -\log n -\log [e(k-1)]\} \right)+ \mathbb P \left( \mathcal C ^c\right) .
\end{align}
Now, for $\rho_n \gg \log n/n$ (including $\rho_n=1$) and $C(\pi,S) > 0$ or $\rho_n = \log n/n$ and $C(\pi,S) > 1$, and by Theorem \ref{thm_7.4} that implies $ \mathbb P \left( \mathcal C ^c\right) \to 0$, the last expression in \eqref{fin} converges to $0$ as $n\to \infty$.

\section{Proof of auxiliary results for Theorem \ref{thm_weak} (weak consistency)}\label{proof_aux_weak}

This section is devoted to the proof of the auxiliary results used in establishing Theorem \ref{thm_weak}.

\subsection{Proof of Lemma \ref{lemma7.1} }\label{proof_lemma7.1}

Following Lemma 1 in \citep{vander}
, fix $\mathbf e_n, \mathbf z_n \in [k]^n$. We begin by rewriting the left-hand side 
\begin{align*}
\frac{1}{n}\sum_{i=1}^n \theta_i \mathbf 1_{\{e_i \neq z_i\}}
&= \frac{1}{n}\sum_{i=1}^n \theta_i - \frac{1}{n}\sum_{i=1}^n \theta_i \mathbf 1_{\{e_i = z_i\}} \\
&= 1 - \frac{1}{n}\sum_{i=1}^n \theta_i \mathbf 1_{\{e_i = z_i\}}.
\end{align*}
By definition of $R(\mathbf e_n,\mathbf z_n)$, we have
\[
\frac{1}{n}\sum_{i=1}^n \theta_i \mathbf 1_{\{e_i = z_i\}}
= \sum_{a=1}^k [R(\mathbf e_n,\mathbf z_n)]_{aa},
\]
and therefore
\begin{align}
\frac{1}{n}\sum_{i=1}^n \theta_i \mathbf 1_{\{e_i \neq z_i\}}
&= 1 - \sum_{a=1}^k [R(\mathbf e_n,\mathbf z_n)]_{aa} \nonumber \\
&= \sum_{a=1}^k \left( \frac{n_a(\mathbf z_n)}{n} - [R(\mathbf e_n,\mathbf z_n)]_{aa} \right),
\label{eq:first_identity}
\end{align}
since $\sum_{a} n_a(\mathbf z_n)/n = 1$.
We now turn to the $\ell^1$ norm. Observe that
\begin{align*}
\frac{1}{2}\left\|
\tDiag\bigl(R(\mathbf e_n,\mathbf z_n)^\intercal \mathbf 1\bigr)
- R(\mathbf e_n,\mathbf z_n)
\right\|_1
&= \frac{1}{2} \sum_{a,b=1}^k
\left|
\left[\tDiag\bigl(R(\mathbf e_n,\mathbf z_n)^\intercal \mathbf 1\bigr)\right]_{ab}
- [R(\mathbf e_n,\mathbf z_n)]_{ab}
\right|\\
&= \frac{1}{2}
\left[
\sum_{a=1}^k
\left|
\frac{n_a(\mathbf z_n)}{n} - [R(\mathbf e_n,\mathbf z_n)]_{aa}
\right|
+
\sum_{a \neq b}
[R(\mathbf e_n,\mathbf z_n)]_{ab}
\right].
\end{align*}
Note that for each $a\in [k]$,
\[
\frac{n_a(\mathbf z_n)}{n}
\ge
[R(\mathbf e_n,\mathbf z_n)]_{aa}.
\]
Moreover,
\[
\sum_{a \neq b}
[R(\mathbf e_n,\mathbf z_n)]_{ab}
=
\frac{1}{n}\sum_{i=1}^n \theta_i \mathbf 1_{\{e_i \neq z_i\}}.
\]
Hence,
\begin{align*}
\frac{1}{2}\left\|
\tDiag\bigl(R(\mathbf e_n,\mathbf z_n)^\intercal \mathbf 1\bigr)
- R(\mathbf e_n,\mathbf z_n)
\right\|_1
&= \frac{1}{2}
\left[
\sum_{a=1}^k
\left(
\frac{n_a(\mathbf z_n)}{n} - [R(\mathbf e_n,\mathbf z_n)]_{aa}
\right)
+
\frac{1}{n}\sum_{i=1}^n \theta_i \mathbf 1_{\{e_i \neq z_i\}}
\right].
\end{align*}
Using \eqref{eq:first_identity}, we conclude that
\[
\frac{1}{2}\left\|
\tDiag\bigl(R(\mathbf e_n,\mathbf z_n)^\intercal \mathbf 1\bigr)
- R(\mathbf e_n,\mathbf z_n)
\right\|_1
=
\sum_{a=1}^k
\left(
\frac{n_a(\mathbf z_n)}{n} - [R(\mathbf e_n,\mathbf z_n)]_{aa}
\right)
=
\frac{1}{n}\sum_{i=1}^n \theta_i \mathbf 1_{\{e_i \neq z_i\}}.
\]
Finally, since $R(\mathbf z_n,\mathbf z_n) = \tDiag(R(\mathbf e_n,\mathbf z_n)^\intercal \mathbf 1)$, the last equality follows 
\[
\frac{1}{2}\|R(\mathbf z_n,\mathbf z_n) - R(\mathbf e_n,\mathbf z_n)\|_1
=
\frac{1}{2}\left\|
\tDiag\bigl(R(\mathbf e_n,\mathbf z_n)^\intercal \mathbf 1\bigr)
- R(\mathbf e_n,\mathbf z_n)
\right\|_1.
\]
This completes the proof.

\subsection{Proof of Lemma \ref{lemma7.2} }\label{proof_lemma7.2}

Expanding $ H_{S,n}(\Diag) -  H_{S,n}(R)$, we obtain 
\begin{align*}
&\frac{1}{\rho_n} \biggl(  H_{S,n}(\Diag)  -  H_{S,n}(R)\biggr) \\
&\quad = \sum_{ab} [\Diag S\Diag^\intercal]_{ab}  \log \left(\frac{[\Diag S\Diag^\intercal]_{ab}   }{ [\Diag S\Diag \mathbf1]_a  [\Diag S\Diag \mathbf1]_b}\right) \\ 
&\quad \qquad - \sum_{ab} [R SR^\intercal]_{ab}  \log \left(\frac{[R SR^\intercal]_{ab}   }{ [R SR^\intercal \mathbf1]_a  [R SR^\intercal \mathbf1]_b}\right)  \\
&\quad \qquad - \log (n^2 \rho_n) \sum_{ab}\left(  [\Diag S\Diag^\intercal]_{ab} - [R SR^\intercal]_{ab} \right)  \\
&\quad = \sum_{ab} [\Diag S\Diag^\intercal]_{ab}  \log \left(\frac{[\Diag S\Diag^\intercal]_{ab}   }{ [\Diag S\Diag \mathbf1]_a  [\Diag S\Diag \mathbf1]_b}\right) \\
&\quad \qquad- \sum_{ab} [R SR^\intercal]_{ab}  \log \left(\frac{[R SR^\intercal]_{ab}   }{ [R SR^\intercal \mathbf1]_a  [R SR^\intercal \mathbf1]_b}\right) ,
\end{align*}
where the last equality holds because $\sum_{ab} [RSR^\intercal]_{ab} = \sum_{ab} [\Diag S \Diag]_{ab}$. Let us denote this common sum by $S^\star$. We observe that 
\[
S^\star = \sum_{ab} [R^\intercal \mathbf1]_a S_{ab} [R^\intercal \mathbf1]_b \geq s_{\min}  \sum_{ab} [R^\intercal \mathbf1]_a  [R^\intercal \mathbf1]_b =s_{\min} >0,
\]
where we used $\|R\|_1=1$. 

To prove the non-negativity of this difference, we introduce two probability distributions on $[k]^2$, namely 
\[
P^R_{ab} = \frac{[RSR^\intercal]_{ab}}{S^\star} \quad \text{and} \quad P^D_{ab} = \frac{[\Diag S\Diag]_{ab}}{S^\star}, \quad \forall a,b \in [k].
\]
Their respective first marginal distributions are given by
\[
p_a^R = \frac{[RSR^\intercal \veco]_a}{S^\star} \quad \text{and} \quad p_a^D = \frac{[\Diag S\Diag^\intercal \veco]_a}{S^\star}, \quad \forall a \in [k].
\]
Using the definition \eqref{def_G_S} of $G_S$, we can write
\begin{align*}
\frac{1}{\rho_n} \biggl(  H_{S,n}(\Diag)  -  H_{S,n}(R)\biggr) = G_S(\Diag) -G_S(R).
\end{align*}
We then can express this difference as a difference between mutual informations. Specifically,
\begin{equation}\label{eq_GS_diff}
G_S(\Diag) - G_S(R) = S^\star \left\{ \sum_{ab} P^D_{ab} \log \left( \frac{P^D_{ab}}{p_a^D p_b^D} \right) - \sum_{ab} P^R_{ab} \log \left( \frac{P^R_{ab}}{p_a^R p_b^R} \right) \right\}.
\end{equation}

Let $\tilde R$ be the matrix $R$ normalized by its columns, namely $\tilde R_{ab} = R_{ab}/[R^\intercal \mathbf 1]_b$. By definition, $\tilde R^\intercal$ is a stochastic matrix. Since $R = \tilde R \Diag$, we can relate $P^R$ and $P^D$ via 
\begin{equation*}
P^R = \tilde R P^D \tilde R^\intercal.
\end{equation*}
We now rely on the Data Processing Inequality (DPI) for mutual information \citep[Section 2.8]{cover_thomas}. Let $(U,V)$ be a pair of random variables jointly distributed according to $P^D$. We introduce a new pair of random variables $(X, Y)$ defined as
\begin{align*}
\mathbb P((X,Y)=(a,b) \mid (U,V)=(u,v)) &= \mathbb P(X=a \mid U=u) \mathbb P(Y=b\mid V=v) \\
&= \tilde R_{au} \tilde R_{bv}.
\end{align*}
By marginalizing over $(U,V)$, the joint distribution of $(X,Y)$ is exactly $P^R$. Using the definition of mutual information for two random variables $I(A;B) = \sum_{a,b} P_{A,B}(a,b) \log \left( \frac{P_{A,B}(a,b)}{P_A(a)P_B(b)} \right)$, Equation \eqref{eq_GS_diff} simplifies to 
\[
G_{S}(\Diag) - G_{S}(R) = S^\star \left\{ I(U;V) - I(X;Y) \right\}.
\]
From our construction, the distributions of those random variables form the following Markov chains $V \rightarrow U \rightarrow X$ and $X \rightarrow V \rightarrow Y$. Applying the DPI sequentially yields 
\begin{align} 
I(U;V) &\geq I(X;V), \label{ineq1} \\
I(X;V) &\geq I(X;Y). \label{ineq2}
\end{align}
Combining these inequalities guarantees that $I(U;V) \geq I(X;Y)$, which proves that $$G_{S}(\Diag) - G_{S}(R) \geq 0.$$
Finally, we demonstrate that this inequality is strict unless $R$ is a diagonal matrix, up to permutation. The inequality $I(U;V) \geq I(X;Y)$ is strict if and only if either \eqref{ineq1} or \eqref{ineq2} is strict. Let us analyze \eqref{ineq1}, which becomes an equality if and only if $I(V;U \mid X) = 0$. 

By expanding the conditional mutual information through the Kullback--Leibler divergence \eqref{def_KL_poiss}, 
and using the conditional entropy $\mathcal H$, we obtain 
\begin{align*}
I(V;U &\mid X) \\
=& \mathcal H(V \mid X) - \mathcal H(V \mid U) \\
=& \sum_{x,u} \tilde R_{xu} p_{u}^D \left(\sum_v \text{KL}\left(\frac {P_{uv}^D }{ p_u^D} \bigg\| \frac{\sum_{u'} \tilde R _{xu'} P_{u'v}^D}{ p_x^R }\right) - \frac{\sum_{v,u'} \tilde R _{xu'} P_{u'v}^D}{p_x^R} +  \frac {\sum_v P_{uv}^D }{ p_u^D} \right).
\end{align*}
Since $\sum_v P_{uv}^D = p_u^D$ and $p_x^R = \sum_{v,u'} \tilde R_{xu'} P_{u'v}^D$, the linear terms cancel out, leaving 
\begin{align*}
I(V;U \mid X) = \sum_{x,u,v} \tilde R_{xu} p_{u}^D  \text{KL}\left( \frac {P_{uv}^D }{ p_u^D}\bigg\| \frac{\sum_{u'} \tilde R _{xu'} P_{u'v}^D}{ p_x^R }\right).
\end{align*}
For this sum to be zero, one of two conditions must hold for every $(x,u)\in [k]^2$: either $\tilde R_{xu} p_u^D = 0$, or the two Poisson parameters inside the KL divergence are identical for all $v\in [k]$. 

Assume for contradiction that $\tilde R_{xu}\, p_u^D = 0$ for all $(x,u)$. Since $\bigl[R^\intercal \veco\bigr]_u > 0$ for all $u$, and $p_u^D \geq s_{\min}\bigl[R^\intercal \veco\bigr]_u$, it follows, because $s_{\min} > 0$, that $p_u^D > 0$ for all $u$. Hence, the condition $\tilde R_{xu}\, p_u^D = 0$ for all $(x,u)$ would imply that $R \equiv 0$, in contradiction with the normalization constraint $\|R\|_1 = 1$.

Therefore, $I(V;U \mid X) = 0$ if and only if, for every $(x,u)$ where $\tilde R_{xu} > 0$, the following holds,
\begin{align} \label{eq_strict_cond}
\forall v \in [k], \quad \frac{P_{uv}^D}{p_u^D}= \frac{\sum_{u'} \tilde R_{xu'} P_{u'v}^D}{p_x^R}.
\end{align}
We now show that this implies that $R$ must necessarily be a diagonal matrix up to permutation. Assume a specific row $x$ (a cluster) contains nodes originating from two distinct groups, $u_1$ and $u_2$ (i.e., $\tilde R_{xu_1} > 0$ and $\tilde R_{xu_2} > 0$). Equation \eqref{eq_strict_cond} forces 
\[
\frac{P_{u_1v}^D}{p_{u_1}^D} = \frac{P_{u_2v}^D}{p_{u_2}^D} \quad \forall v,
\]
which simplifies to $S_{u_1v} = c S_{u_2v}$ for a constant $c = [S\Diag \mathbf1]_{u_1}/ [S\Diag \mathbf1]_{u_2}$ independent of $v$. This implies two rows of $S$ are collinear, contradicting the fact that $S$ is a full-rank matrix. 

Thus, $u_1$ must equal $u_2$. This means each row $x$ in $R$ contains at most one non-zero entry. Combined with the requirement that no column of $R$ is entirely zero, $R$ must be a diagonal matrix up to permutation. By symmetry, the same logic applies to \eqref{ineq2}. We conclude that $G_{S}(\Diag) - G_{S}(R) > 0$ strictly, unless $R$ is a diagonal matrix up to permutation.

\subsection{Proof of Theorem \ref{thm_7.4} }\label{proof_thm_7.4}

For any $\mathbf{e}_{n}\in [k]^n$ 
and fixed $a,b\in [k]$ and $x>0$, consider the event 
\[B_n^{ab}(\mathbf{e}_{n}) \; = \;\bigl\{\, o_{ab}(\mathbf e _n)- \rho_n n^2 [RSR^\intercal]_{ab}>\, x \bigr\}\,,
\] 
with $R= R(\mathbf{e}_n, \mathbf{Z}_n)$. 
First we deal with the case $a\neq b$.
Conditioning on $ \mathbf{Z}_n =\mathbf{z}_n$, we observe that 
$$  o_{ab}(\mathbf e _n)- \rho_n n^2 [RSR^\intercal]_{ab} = \sum_{1\leq i,j\leq n } \mathbf 1_{\{e_i=a,e_j=b\}} (A_{ij}- \rho_n \theta_i \theta_j S_{z_iz_j})$$
is a sum of $n_{a}(\mathbf{e}_{n})n_{b}(\mathbf{e}_{n})$ independent, zero-mean random variables. Applying Chernoff's bound, and letting $Y_{ij} = \1_{\{e_i=a, e_j=b\}} A_{ij}$, we have 
\begin{align*}
\mathbb P\left(
B_n^{ab}(\mathbf e_n)
\,\middle|\,
\mathbf Z_n=\mathbf z_n
\right)
&\le
\inf_{t>0}
\exp(-tx)
\,
\mathbb E\left[
\exp\left(
t \sum_{i,j}
\left[
Y_{ij} - \mathbb E\left(Y_{ij} \mid \mathbf Z_n=\mathbf z_n\right)
\right]
\right)
\right]
\\
&\le
\inf_{t>0}
\exp(-tx)
\prod_{i,j}
\frac{
\mathbb E \left[
e^{tY_{ij}}
\,\middle|\,
\mathbf Z_n=\mathbf z_n
\right]
}{
\exp\left(
t\,\mathbb E\left(Y_{ij}\mid \mathbf Z_n=\mathbf z_n\right)
\right)
}.
\end{align*}
Since $Y_{ij}$ is Poisson, we find 
\[
\mathbb{E}[e^{tY_{ij}} \mid \mathbf{Z}_n = \mathbf{z}_n] = \exp \left[ \1_{\{e_i=a, e_j=b\}} \theta_i \theta_j \rho_n S_{z_i z_j} (e^t - 1) \right].
\]
It follows that 
\begin{align*}
\mathbb P\left(
B_n^{ab}(\mathbf e_n)
\,\middle|\,
\mathbf Z_n=\mathbf z_n
\right)
&\le
\inf_{t>0}
\exp\left[
-tx
+
\sum_{i,j}
\theta_i \theta_j \rho_n S_{z_i z_j}
\mathbf 1_{\{e_i=a,\,e_j=b\}}
\left(e^t - 1 - t\right)
\right].
\end{align*}
Using the inequality $e^t-t-1 \leq t^2$ for $t\in [0,1]$, we obtain 
\begin{align*}
\mathbb{P}(  B^{ab}_n(\mathbf{e}_{n},x)\,|\, \mathbf{Z}_n=\mathbf{z}_{n}) &  \leq  \inf_{t\in[0,1]} \exp(-tx + t^2 \rho_n s_{\max} n_a( \mathbf{e}_n) n_b (\mathbf{e}_n) ) \\
&  \leq  \inf_{t\in[0,1]} \exp(-tx + t^2 \rho_n s_{\max} n^2).
\end{align*}
Differentiating with respect to $t$, the minimum is attained at 
\begin{equation} \label{def_t}
t = \frac{x}{2 \rho_n s_{\text{max}} n^2},
\end{equation}
and gives the bound 
$$
 \exp \left( -\frac{x^2 n^2}{4 \rho_n s_{\text{max}}}\right).
$$
The same argument can be applied to bound the probability of the lower tail event $\tilde{B}_n^{ab}(\mathbf{e}_n) = \{o_{ab}(\mathbf{e}_n) - \rho_n n^2 [RSR^\intercal]_{ab} < -x\}$. Let's now concentrate on the case where $a=b$.
To apply Chernoff's bound, we must express $o_{aa}(\mathbf{e}_n)$ as a sum of mutually independent random variables by restricting the summation to indices $1 \leq i \leq j \leq n$
\[
o_{aa}(\mathbf e _n) = \sum_{1\leq i,j\leq n } \mathbf{1}_{\{e_i=a,e_j=a\}} A_{ij} = 2 \sum_{1 \leq i < j \leq n} \mathbf{1}_{\{e_i=a, e_j=a\}} A_{ij} + \sum_{1 \leq i \leq n} \mathbf{1}_{\{e_i=a\}} A_{ii}.
\]
Conditioning on $\mathbf{Z}_n = \mathbf{z}_n$, we define the independent random variables $Y_{ij}$ for all $i \leq j$ by $Y_{ij} = 2 \mathbf{1}_{\{e_i=a, e_j=a\}} A_{ij}$,
for $i < j$, and $Y_{ii} = \mathbf{1}_{\{e_i=a\}} A_{ii}$, for $i = j$.
It follows that
\[
o_{aa}(\mathbf e _n) - \rho_n n^2 [RSR^\intercal]_{aa} = \sum_{1 \leq i \leq j \leq n} \left( Y_{ij} - \mathbb{E}[Y_{ij} \mid \mathbf{Z}_n = \mathbf{z}_n] \right).
\]
Similarly, the minimum is attained at 
\[
t^* = \frac{x}{4 \rho_n s_{\max} n^2}.
\]
This yields to the final upper bound for the intra-community case:
\[
\mathbb{P}\left( B^{aa}_n(\mathbf{e}_{n}) \;\middle|\; \mathbf{Z}_n=\mathbf{z}_{n}\right) \leq \exp \left( -\frac{x^2 n^2}{8 \rho_n s_{\max}} \right).
\]

Consequently, for any labeling $\mathbf{e}_n$, we have 
\[
\mathbb{P} \left( \sup_{a,b \in [k]} \left| \frac{o_{ab}(\mathbf{e}_n)}{n^2} - \rho_n[RSR^\intercal]_{ab} \right| > x \mid \mathbf{Z}_n = \mathbf{z}_n \right) \leq 2 k^2 \exp \left( -\frac{x^2 n^2}{4 \rho_n s_{\text{max}}} \right).
\]
To control this probability simultaneously for all $\mathbf{e}_n \in [k]^n$, we set 
\[
x = \sqrt{\frac{16 s_{\text{max}} \rho_n \log k}{n^2}}.
\]
We then obtain 
\begin{align*}
\mathbb{P} \left( \bigcup_{a,b \in [k]} \bigcup_{\mathbf{e}_n \in [k]^n} \left\{ \left| \frac{o_{ab}(\mathbf{e}_n)}{n^2} - \rho_n [RSR^\intercal]_{ab} \right| > x \right\} \mid \mathbf{Z}_n = \mathbf{z}_n \right) \leq 2 k^2 \exp(-n \log k).
\end{align*}
Then by the formula of total probability, we have
\begin{align} \label{borne}
\mathbb{P}& \left( \bigcup_{a,b \in [k]} \bigcup_{\mathbf{e}_n \in [k]^n} \left\{ \left| \frac{o_{ab}(\mathbf{e}_n)}{n^2} - \rho_n [RSR^\intercal]_{ab} \right| > x \right\}  \right)\nonumber \\
&=\sum_{ \mathbf{z}_n \in [k]^n} \mathbb{P} \left( \bigcup_{a,b \in [k]} \bigcup_{\mathbf{e}_n \in [k]^n} \left\{ \left| \frac{o_{ab}(\mathbf{e}_n)}{n^2} - \rho_n [RSR^\intercal]_{ab} \right| > x \right\} \mid \mathbf{Z}_n = \mathbf{z}_n \right) \mathbb{P} \left( \mathbf{Z}_n = \mathbf{z}_n \right) \nonumber \\
& \leq \sum_{ \mathbf{z}_n \in [k]^n} 2 k^2 \exp(-n \log k) \mathbb{P} \left( \mathbf{Z}_n = \mathbf{z}_n \right) \nonumber \\
& \leq 2 k^2 \exp(-n \log k).
\end{align}
As the bound in \eqref{borne} is summable in $n$, the Borel-Cantelli Lemma ensures that
\[
\left| \frac{o_{ab}(\mathbf{e}_n)}{n^2} - \rho_n [RSR^\intercal]_{ab} \right| < \sqrt{\frac{16 s_{\text{max}} \rho_n \log k}{n^2}}
\]
holds simultaneously for all $a, b \in [k]$ and $\mathbf{e}_n \in [k]^n$ with probability one for $n$ large enough. This proves the first inequality. For the second, we use the relation $D_a(\mathbf{e}_n) = \sum_b o_{ab}(\mathbf{e}_n)$, which leads to 
\begin{align*}
\left| \frac{D_a(\mathbf{e}_n)}{ n^2} -\rho_n [RSR^\intercal \veco]_a \right| & = \left| \sum_b \left( \frac{o_{ab}(\mathbf{e}_n)}{n^2} -\rho_n [RSR^\intercal]_{ab} \right) \right| \\
& \leq \sum_b \left| \frac{o_{ab}(\mathbf{e}_n)}{n^2} - \rho_n[RSR^\intercal]_{ab} \right| \\
& \leq k \sqrt{\frac{16 s_{\text{max}} \rho_n \log k}{n^2}}.
\end{align*}
This completes the proof of Theorem \ref{thm_7.4}.

\subsection{Proof of Corollary \ref{cor_7.5} }\label{proof_cor_7.5}

For any $\delta>0$, by Theorem \ref{thm_7.4}, we have that for all $\mathbf{e}_n \in [k]^n$, and with $R = R(\mathbf{e}_n, \mathbf{Z}_n)$,
\begin{equation}\label{ev}
\left| \frac{o_{ab}(\mathbf{e}_n)}{n^2} - \rho_n [RSR^\intercal]_{ab} \right| < \delta \rho_n,
\end{equation}
simultaneously for all $a, b \in [k]$, eventually almost surely as $n \to \infty$ as soon as either $\rho_n=1$ or $\rho_n \to 0$ with $n\rho_n \to \infty$. 
By definition, we have 
\begin{equation*}
\begin{split}
\rho_n n^2 [RSR^\intercal]_{ab} &= \mathbb{E}[o_{ab}(\mathbf{e}_n) \mid \mathbf{Z}_n = \mathbf{z}_n] \\
&= \sum_{a',b'} \rho_n S_{a'b'} \sum_{1 \leq i,j \leq n} \theta_i \theta_j \1_{\{z_i=a', z_j=b'\}} \1_{\{e_i=a, e_j=b\}}.
\end{split}
\end{equation*}
By hypothesis, since $S_{a'b'} \geq s_{\min}$ and $\mathbf{e}_n \in \mathcal{F}(n, \alpha)$, it follows that 
\begin{equation*}
\begin{split}
\rho_n n^2 [RSR^\intercal]_{ab} &\geq \rho_n s_{\min} \left( \sum_{i=1}^n \theta_i \1_{\{e_i=a\}} \right) \left( \sum_{j=1}^n \theta_j \1_{\{e_j=b\}} \right) \\
&\geq \rho_n s_{\min} n_a(\mathbf{e}_n) n_b(\mathbf{e}_n) \\
&\geq \rho_n s_{\min} \alpha^2 n^2.
\end{split}
\end{equation*}
Analogously, using $n_a(\mathbf{e}_n)\leq n$  for all $a$ and $S_{a'b'} \leq s_{\max}$, we obtain 
\begin{equation*}
\rho_n n^2 [RSR^\intercal]_{ab} \leq \rho_n s_{\max} n^2.
\end{equation*}
Relying on \eqref{ev}, we get
\begin{equation*}
\alpha^2 s_{\min} - \delta < [RSR^\intercal]_{ab} - \delta < \frac{o_{ab}(\mathbf{e}_n)}{\rho_n n^2} < [RSR^\intercal]_{ab} + \delta \leq s_{\max} + \delta.
\end{equation*}
Consequently,
\begin{equation*}
\frac{o_{ab}(\mathbf{e}_n)}{\rho_n n^2} \in \left( \alpha^2 s_{\min} - \delta, s_{\max} + \delta \right)
\end{equation*}
simultaneously for all $a, b \in [k]$ and $\mathbf{e}_n \in \mathcal{F}(n, \alpha)$, eventually almost surely as $n \to \infty$. 
For the second inequality, we again note that $D_a(\mathbf{e}_n) = \sum_b o_{ab}(\mathbf{e}_n)$, which leads to 
\begin{equation*}
\sum_b (\alpha^2 s_{\min} - \delta) < \frac{D_a(\mathbf{e}_n)}{\rho_n n^2} < \sum_b (s_{\max} + \delta),
\end{equation*}
and thus
\begin{equation*}
k(\alpha^2 s_{\min} - \delta) < \frac{D_a(\mathbf{e}_n)}{\rho_n n^2} < k(s_{\max} + \delta).
\end{equation*}

\subsection{Proof of Theorem \ref{theorem7.6} }\label{proof_theorem7.6}

Using the relation $\sum_b o_{ab}(\mathbf{e}_n) = D_a(\mathbf{e}_n)$, the term $Q(\mathbf{e}_n)$ can be expressed as 
\[
Q(\mathbf{e}_n) = \sum_{a,b} \frac{o_{ab}(\mathbf{e}_n)}{n^2} \log(o_{ab}(\mathbf{e}_n)) - 2\sum_{a} \frac{D_a(\mathbf{e}_n)}{n^2} \log(D_a(\mathbf{e}_n)).
\]
Let $f(x) = x \log x$ (with the convention $0 \log 0 = 0$). By normalizing by $\rho_n n^2$ inside the logarithms, we can rewrite $Q$ as 
\[
Q(\mathbf{e}_n) = \rho_n \left[ \sum_{a,b} f\left(\frac{o_{ab}(\mathbf{e}_n)}{\rho_n n^2}\right) - 2 \sum_{a} f\left(\frac{D_a(\mathbf{e}_n)}{\rho_n n^2}\right) \right] + C_n,
\]
where $C_n = -\log(\rho_n n^2)\sum_{a,b} o_{ab}(\mathbf{e}_n)/n^2$. Since $\sum_{a,b} o_{ab}(\mathbf{e}_n)$ is the total number of edges in the graph, $C_n$ is independent of the labeling $\mathbf{e}_n$. Similarly, for $H_{ S, n}(R(\mathbf{e}_n, \mathbf{Z}_n))$, we have 
\[
H_{S, n}(R(\mathbf{e}_n, \mathbf{Z}_n)) = \rho_n \left[ \sum_{a,b} f\!\left(\frac{\mathbb{E}[o_{ab}(\mathbf{e}_n) \mid \mathbf{Z}_n]}{\rho_n n^2}\right) - 2 \sum_{a} f\!\left(\frac{\mathbb{E}[D_a(\mathbf{e}_n) \mid \mathbf{Z}_n]}{\rho_n n^2}\right) \right] + C_n.
\]
By definition of $X(\mathbf{e}_n, \mathbf{Z}_n)$ \eqref{defX}, we obtain 
\begin{align*}
X(\mathbf{e}_n, \mathbf{Z}_n) &= \rho_n \sum_{a,b} \left[ f\left(\frac{o_{ab}(\mathbf{e}_n)}{\rho_n n^2}\right) - f\left(\frac{\mathbb{E}[o_{ab}(\mathbf{e}_n) \mid \mathbf{Z}_n]}{\rho_n n^2}\right) \right] \\
&\quad - 2 \rho_n \sum_{a} \left[ f\left(\frac{D_a(\mathbf{e}_n)}{\rho_n n^2}\right) - f\left(\frac{\mathbb{E}[D_a(\mathbf{e}_n) \mid \mathbf{Z}_n]}{\rho_n n^2}\right) \right].
\end{align*}
The function $f$ has bounded derivatives on any interval $[m^-, m^+]$ where $m^- > 0$. From Corollary \ref{cor_7.5}, for any $\delta' > 0$ and for $n$ sufficiently large, the following holds eventually almost surely 
\[
\frac{o_{ab}(\mathbf{e}_n)}{\rho_n n^2} \in [\alpha^2 s_{\min} - \delta', s_{\max} + \delta'] \quad \text{and} \quad \frac{D_a(\mathbf{e}_n)}{\rho_n n^2} \in [k(\alpha^2 s_{\min} - \delta'), k (s_{\max} + \delta')].
\]
By choosing $\delta' = \alpha^2 s_{\min} / 2$ (and assuming that $\alpha$ is small enough so that $\delta' \leq s_{\max}/2$), we ensure that these ratios lie in the compact intervals 
\[
I_1 = \left[\frac{\alpha^2 s_{\min}}{2},\, \frac{3 s_{\max}}{2}\right]
\quad \text{and} \quad
I_2 = \left[\frac{k \alpha^2 s_{\min}}{2},\, \frac{3k s_{\max}}{2}\right],
\]
whose lower bounds are strictly positive by assumption.
Let $M$ be the maximum absolute value of the derivative $f'$ over $I_1 \cup I_2$. Applying the mean value theorem and using the shorthand $R(\mathbf{e}_n) = R(\mathbf{e}_n, \mathbf{Z}_n)$, as well as \eqref{E_oab} and \eqref{E_Da}, we obtain 
\begin{align*}
|X(\mathbf{e}_n, \mathbf{Z}_n) - X(\mathbf{Z}_n, \mathbf{Z}_n)| &\leq M \Biggl( \sum_{a,b} \left| \frac{o_{ab}(\mathbf{e}_n)}{n^2} - \rho_n[R(\mathbf{e}_n) S R(\mathbf{e}_n)^\intercal]_{ab} \right| \\
&\quad + \sum_{a,b} \left| \frac{o_{ab}(\mathbf{Z}_n)}{n^2} - \rho_n[R(\mathbf{Z}_n) S R(\mathbf{Z}_n)^\intercal]_{ab} \right| \\
&\quad + 2 \sum_{a} \left| \frac{D_a(\mathbf{e}_n)}{n^2} -\rho_n [R(\mathbf{e}_n) S R(\mathbf{e}_n)^\intercal \mathbf 1]_a \right| \\
&\quad + 2 \sum_{a} \left| \frac{D_a(\mathbf{Z}_n)}{n^2} -\rho_n [R(\mathbf{Z}_n) S R(\mathbf{Z}_n)^\intercal \mathbf 1]_a \right| \Biggr).
\end{align*}
Applying Theorem \ref{thm_7.4} to each term, we get
\begin{align*}
\left| X(\mathbf{e}_n,\mathbf{Z}_{n})- X(\mathbf{Z}_{n},\mathbf{Z}_{n}) \right|&\leq M \Biggl(  \sum_{a,b} \sqrt{\frac{8 s_\text{max} \rho_n  \log k}{n^2}} +  \sum_{a,b} \sqrt{\frac{8 s_\text{max} \rho_n  \log k}{n^2}} \\
& \qquad + 2  \sum_{a}k \sqrt{\frac{8 s_\text{max} \rho_n  \log k}{n^2}} + 2  \sum_{a} k \sqrt{\frac{8 s_\text{max} \rho_n  \log k}{n^2}} \Biggr) \\
& \leq 6 M k^2 \rho_n \sqrt{\frac{8 s_\text{max} \log k}{\rho_n n^2}} .
\end{align*}
Since $n\rho_n \to \infty$, the term $\sqrt{\frac{8 s_{\max} \log k}{\rho_n n^2}}$ vanishes as $n \to \infty$. Thus, for any $\tilde \delta > 0$, there exists $n_0$ such that for all $n > n_0$ 
\[
|X(\mathbf{e}_n, \mathbf{Z}_n) - X(\mathbf{Z}_n, \mathbf{Z}_n)| \leq \tilde \delta \rho_n,
\]
simultaneously for all $\mathbf{e}_n \in \mathcal{F}(n, \alpha)$, eventually almost surely.

\section{Proof of auxiliary results for Theorem \ref{thm_strong} (strong consistency)} \label{proof_aux_strong}

\subsection{Proof of Proposition \ref{prop7.9}}\label{proof_prop7.9}
Recall that
\begin{align*} 
W&(\mathbf{e}_n,\mathbf{z}_n)  \\
&= \sum_{1\leq a, b\leq k} W_{ab}(\mathbf{e}_n,\mathbf{z}_n) \left[ \log \Bigl( \rho_n[R(\mathbf{z}_{n},\mathbf{z}_{n})SR(\mathbf{z}_{n},\mathbf{z}_{n})]_{ab }\Bigr) -  2     \log  \Bigl( \rho_n[R(\mathbf{z}_{n},\mathbf{z}_{n})SR(\mathbf{z}_{n},\mathbf{z}_{n})\mathbf 1]_{a }\Bigr) \right], 
\end{align*}
and
$$W_{ab}(\mathbf{e}_n,\mathbf{z}_n) =\frac{1}{n^2} \left\{ o_{ab}(\mathbf{e}_n) - \mathbb{E}[o_{ab}(\mathbf{e}_n)\mid \mathbf{z}_n] - o_{ab}(\mathbf{z}_n) + \mathbb{E}[o_{ab}(\mathbf{z}_n)\mid \mathbf{z}_n] \right\} .$$
We then can write 
\begin{align*} 
W&(\mathbf{e}_n,\mathbf{z}_n) =  \frac 1 {n^2} \sum_{1\leq a, b\leq k}  \sum_{1\leq i,j\leq n} Y_{ij}^{ab} - \mathbb{E} \left[Y_{ij}^{ab} \mid  \mathbf{z}_n \right],\end{align*}
with 
\begin{align*} Y_{ij}^{ab} = &\left[ \log \Bigl( \rho_n[R(\mathbf{z}_{n},\mathbf{z}_{n})SR(\mathbf{z}_{n},\mathbf{z}_{n})]_{ab }\Bigr) -  2     \log  \Bigl( \rho_n[R(\mathbf{z}_{n},\mathbf{z}_{n})SR(\mathbf{z}_{n},\mathbf{z}_{n})\mathbf 1]_{a }\Bigr) \right] \\
&\times \left[ A_{ij} \1_{e_i=a,e_j=b}- A_{ij} \1_{z_i=a,z_j=b}\right].
\end{align*}
Consider 
$$Y_{ij}= \sum_{1\leq a, b\leq k} Y_{ij}^{ab},$$
then we can write 
$$Y_{ij}=\left[ \log \Bigl( \frac{[R(\mathbf{z}_{n},\mathbf{z}_{n})SR(\mathbf{z}_{n},\mathbf{z}_{n})]_{e_ie_j } }{[R(\mathbf{z}_{n},\mathbf{z}_{n})SR(\mathbf{z}_{n},\mathbf{z}_{n})]_{z_iz_j }}\Bigr) -  2     \log  \Bigl( \frac{[R(\mathbf{z}_{n},\mathbf{z}_{n})SR(\mathbf{z}_{n},\mathbf{z}_{n})\mathbf 1]_{e_i }}{[R(\mathbf{z}_{n},\mathbf{z}_{n})SR(\mathbf{z}_{n},\mathbf{z}_{n})\mathbf 1]_{z_i }}\Bigr) \right] A_{ij}, $$
and we obtain
\begin{align*} 
W&(\mathbf{e}_n,\mathbf{z}_n) =  \frac 1 {n^2}  \sum_{1\leq i,j\leq n} Y_{ij} - \mathbb{E} \left[Y_{ij}\mid \mathbf{z}_n\right].\end{align*}
Let us denote 
$$\bar{S}_a(\mathbf{e}) = \sum_{b=1}^k \frac{n_b(\mathbf{e})}{n} S_{ab},$$ 
for any labeling $\mathbf{e} \in [k]^n$. By expanding the matrix products evaluated at the true latent positions $\mathbf{Z}_n$, we have 
$$\ [R(\mathbf{Z}_{n},\mathbf{Z}_{n}) S R(\mathbf{Z}_{n},\mathbf{Z}_{n})]_{e_i e_j} = \frac{n_{e_i}(\mathbf{Z}_n) n_{e_j}(\mathbf{Z}_n)}{n^2} S_{e_i e_j},$$ 
and 
$$ [R(\mathbf{Z}_{n},\mathbf{Z}_{n}) S R(\mathbf{Z}_{n},\mathbf{Z}_{n}) \mathbf{1}]_{e_i} = \frac{n_{e_i}(\mathbf{Z}_n)}{n} \bar{S}_{e_i}(\mathbf{Z}_n).$$
Conditioning on $\mathbf{Z}_n= \mathbf{z}_n $ and substituting these into the definition of $W(\mathbf{e}_n,\mathbf{z}_n)$, the logarithmic terms inside the brackets of $Y_{ij}$ become
\begin{align*}
&\log \left( \frac{n_{e_i}(\mathbf{z}_n) n_{e_j}(\mathbf{z}_n) S_{e_i e_j}}{n_{z_i}(\mathbf{z}_n) n_{z_j}(\mathbf{z}_n) S_{z_i z_j}} \right) - 2 \log \left( \frac{n_{e_i}(\mathbf{z}_n) \bar{S}_{e_i}(\mathbf{z}_n)}{n_{z_i}(\mathbf{z}_n) \bar{S}_{z_i}(\mathbf{z}_n)} \right) \\
&= \log \left( \frac{S_{e_i e_j}}{S_{z_i z_j}} \right) - 2 \log \left( \frac{\bar{S}_{e_i}(\mathbf{z}_n)}{\bar{S}_{z_i}(\mathbf{z}_n)} \right) + \log \left( \frac{n_{e_j}(\mathbf{z}_n)}{n_{z_j}(\mathbf{z}_n)} \right) - \log \left( \frac{n_{e_i}(\mathbf{z}_n)}{n_{z_i}(\mathbf{z}_n)} \right).
\end{align*}
Let $\lambda_{ij} = \mathbb{E}[A_{ij} \mid \mathbf{Z}_n = \mathbf{z}_n]$. Since $(A_{ij} - \lambda_{ij})$ is symmetric in $i,j$, the summation over $1 \leq i, j \leq n$ of the last two terms cancels out perfectly
\[
\sum_{i,j} (A_{ij} - \lambda_{ij}) \log \left( \frac{n_{e_j}(\mathbf{z}_n)}{n_{z_j}(\mathbf{z}_n)} \right) - \sum_{i,j} (A_{ij} - \lambda_{ij}) \log \left( \frac{n_{e_i}(\mathbf{z}_n)}{n_{z_i}(\mathbf{z}_n)} \right) = 0.
\]
Thus, we can equivalently rewrite the deviation as
\[
W(\mathbf{e}_n,\mathbf{z}_n) = \frac{1}{n^2} \sum_{1 \leq i, j \leq n} (A_{ij} - \lambda_{ij}) c_{ij}, \quad \text{where} \quad c_{ij} = \log \left( \frac{S_{e_i e_j}}{S_{z_i z_j}} \right) - 2 \log \left( \frac{\bar{S}_{e_i}(\mathbf{z}_n)}{\bar{S}_{z_i}(\mathbf{z}_n)} \right).
\]
Using the symmetry of $A$ and $c_{ij}$, we write the sum over independent variables 
$$n^2 W(\mathbf{e}_n,\mathbf{z}_n)  = 2 \sum_{i<j} (A_{ij} - \lambda_{ij}) c_{ij} + \sum_i (A_{ii} - \lambda_{ii}) c_{ii}.$$
Using the conditional Poisson moment generating function $\mathbb{E}[e^{u(A_{ij} - \lambda_{ij})}] = \exp(\lambda_{ij} L(u))$ where $L(u) = e^u - u - 1$, we obtain for $\tau>0$
\begin{align*}
\mathbb{E} [\exp\{\tau n^2 W(\mathbf{e}_n,\mathbf{z}_n) \} \mid \mathbf{z}_n] &= \prod_{i<j} \exp(\lambda_{ij} L(2\tau c_{ij})) \prod_{i} \exp(\lambda_{ii} L(\tau c_{ii})) \\
&\leq \exp \left( \sum_{1 \leq i \leq j \leq n} \lambda_{ij} L(2\tau c_{ij}) \right)\\
& \leq \exp \left( \sum_{1 \leq i, j \leq n} \lambda_{ij} L(2\tau c_{ij}) \right).
\end{align*}
To decouple the terms in $c_{ij} = c_{ij}^{(1)} - c_{ij}^{(2)}$, we use the convexity of $L$, which guarantees $L(u+v) \leq \frac{1}{2}L(2u) + \frac{1}{2}L(2v)$. Setting $t = 4\tau$, we have $L(2\tau c_{ij}) \leq \frac{1}{2}L(t c_{ij}^{(1)}) + \frac{1}{2}L(-t c_{ij}^{(2)})$. Hence, applying Chernoff's bound for $\tau > 0$, we have
\begin{align}\label{borne_P(w)}
\mathbb{P}&(W(\mathbf{e}_n,\mathbf{z}_n)  > x \mid \mathbf{z}_n) \nonumber \\
&\leq \inf_{t \in (0, 1]} \exp \left( -n^2 x \frac{t}{4} + \frac{1}{2} \sum_{i,j} \lambda_{ij} L\left( t \log \frac{S_{e_i e_j}}{S_{z_i z_j}} \right) + \frac{1}{2} \sum_{i,j} \lambda_{ij} L\left( 2t \log \frac{\bar{S}_{z_i}(\mathbf{z}_n)}{\bar{S}_{e_i}(\mathbf{z}_n)} \right) \right).
\end{align}
Define $K_t(p \| q) = p^t q^{1-t} - t q \log(p/q) - q$. One easily verifies that $$q L(t \log(p/q)) = K_t(p \| q).$$Replacing $\lambda_{ij} = \rho_n \theta_i \theta_j S_{z_i z_j}$, the first sum in the right-hand side of \eqref{borne_P(w)} evaluates to
\begin{align}\label{premierK}
\rho_n\sum_{i,j} \theta_i \theta_j S_{z_i z_j} &L\left( t \log \frac{S_{e_i e_j}}{S_{z_i z_j}} \right) \\
&= \rho_n\sum_{i,j} \theta_i \theta_j K_t(S_{e_i e_j} \| S_{z_i z_j}) \nonumber \\
&= \rho_nn^2 \sum_{a, a'} \sum_{b, b'} R_{aa'} K_t(S_{ab} \| S_{a'b'}) R_{bb'} \nonumber\\
& =2\rho_n n^2\;\sum_{b\neq b'}\sum_{a} R_{aa}K_t(S_{ab} \| S_{ab'})R_{bb'} + \rho_nn^2\;\sum_{a\neq a'}\sum_{b\neq b'} R_{aa'}K_t(S_{ab} \| S_{a'b'})R_{bb'},\nonumber
\end{align}
where $R = R(\mathbf{e}_n, \mathbf{z}_n)$. For the second sum in the right-hand side of \eqref{borne_P(w)}, using that $\sum_j \lambda_{ij} = \rho_n n \theta_i \bar{S}_{z_i}(\mathbf{z}_n)$, we obtain
\begin{align}\label{deuxK}
\sum_{i,j} \lambda_{ij} L\left( 2t \log \frac{\bar{S}_{z_i}(\mathbf{z}_n)}{\bar{S}_{e_i}(\mathbf{z}_n)} \right) &= \rho_n n \sum_i \theta_i \bar{S}_{z_i}(\mathbf{z}_n) L\left( 2t \log \frac{\bar{S}_{z_i}(\mathbf{z}_n)}{\bar{S}_{e_i}(\mathbf{z}_n)} \right) \nonumber \\
&= \rho_n n \sum_i \theta_i K_{2t}(\bar{S}_{e_i}(\mathbf{z}_n) \| \bar{S}_{z_i}(\mathbf{z}_n))\nonumber \\
&= \rho_n n^2 \sum_{a, a'} R_{aa'} K_{2t}(\bar{S}_a \| \bar{S}_{a'}).
\end{align}
Since $m/n = \sum_{a \neq a'} R_{a'a}$ and $K_t(S_{ab} \| S_{a'b'}) \leq D$  for all $a,b,a',b'\in[k]$ and all $t>0$, where $D$ is a constant depending only on $S$, the second sum in \eqref{premierK} is bounded by $\rho_nDm^2$. Similarly, bounding $K_{2t}(\bar{S}_a \| \bar{S}_{a'}) \leq \tilde{D}$, \eqref{deuxK} is bounded by $\rho_n n \tilde{D} m$.
Combining both tails via the union bound and using that $R_{aa} \leq[R^\intercal \mathbf 1]_a$, we conclude:
\begin{align*}
\mathbb{P}\Bigl(W(\mathbf{e}_n,\mathbf{z}_n) >x \;\Big|\; \mathbf{Z}_n=\mathbf{z}_n\Bigr)& \;\leq\;  \exp\Bigl[ - \sup_{t\in (0,1 ]}  \Bigl\{n^2 x \frac{t}{4} -  \rho_n n^2 C_t(R,S)\Bigr\}  + \tilde D \rho_n n m + D\rho_n m^2 \Bigr],
\end{align*}
with $C_t(R,S)$ given by \eqref{tildeC}.

\subsection{Proof of Proposition \ref{propB.2}} \label{proof_propB.2}

Let 
$$
Y^{ab} = \sum_{1\leq i\leq j\leq n} Y^{ab}_{ij}
$$
with  
\begin{align*}
Y_{ij}^{ab} = \frac{1}{n^2}&\Bigl( \mathds{1}\{e_i=a,e_j=b\}+ \mathds{1}\{e_i=b,e_j=a\} 
- \mathds{1}\{z_i=a,z_j=b\} - \mathds{1}\{z_i=b,z_j=a\} \Bigr)A_{ij}\,.
\end{align*}
Conditioned on $\mathbf{Z}_n=\mathbf{z}_n $, the variables $Y_{ij}^{ab}$, for $1\leq i\leq j\leq n$ are independent. 
Applying Chernoff's bound, we obtain
\begin{align*}
 \mathbb{P}( Y^{ab} -  \mathbb{E}(Y^{ab}) > x \mid \mathbf{Z}_n=\mathbf{z}_n) \leq  \inf_{t>0} \exp\bigl(- xt  \bigr)\prod_{1\leq i \leq j\leq n}\mathbb{E}\bigl( \exp[ t(Y_{ij}^{ab}-\mathbb{E}(Y_{ij}^{ab}))]\bigr),
\end{align*}
where the expectations are given $\mathbf{Z}_n=\mathbf{z}_n $.
Let's analyze the expected values. Define
\begin{align*}
\mathds{1}_{ij}^{ab}(\mathbf{e}_n,\mathbf{z}_n) \;=\; & \mathds{1}\{e_i=a,e_j=b\}+ \mathds{1}\{e_i=b,e_j=a\} - \mathds{1}\{z_i=a,z_j=b\} - \mathds{1}\{z_i=b,z_j=a\}\,.
\end{align*}
We have
$$
\mathds{E}(Y_{ij}^{ab} \mid \mathbf{Z}_n=\mathbf{z}_n ) = \frac{\rho_n\theta_i \theta_jS_{z_iz_j}\mathds{1}_{ij}^{ab}(\mathbf{e}_n,\mathbf{z}_n)}{n^2}\,.
$$
On the other hand
\begin{align*}
\mathds{E}(\exp(tY_{ij}^{ab}) \mid \mathbf{Z}_n=\mathbf{z}_n ) \;&=\;  \exp\left(\theta_i \theta_j \rho_nS_{z_iz_j}\exp\left[\frac{t\mathds{1}_{ij}^{ab}(\mathbf{e}_n,\mathbf{z}_n)}{n^2}-1\right]\right).
\end{align*}
Then 
\begin{align*}
\mathds{E}(&\exp(t(Y_{ij}^{ab}- \mathds{E}(Y_{ij}^{ab}))\,|\, \mathbf{Z}_n=\mathbf{z}_{n}) \\
&= \exp\left(\theta_i \theta_j \rho_nS_{z_iz_j}\exp\left[\frac{t\mathds{1}_{ij}^{ab}(\mathbf{e}_n,\mathbf{z}_n)}{n^2}-1\right]\right) \exp \left( - \frac{t\theta_i \theta_j\rho_nS^t_{z_iz_j}\mathds{1}_{ij}^{ab}(\mathbf{e}_n,\mathbf{z}_n)}{n^2} \right)
\\&=\exp\left( \theta_i \theta_j \rho_n S_{z_iz_j} L\left[ \frac{t\mathds{1}_{ij}^{ab}(\mathbf{e}_n,\mathbf{z}_n)}{n^2}\right] \right),
\end{align*}
with $L(y)=e^y-y-1$.
By choosing $t=n^2$ and using the fact  that $L(y)\leq 3|y|$ for $y\in[-2,2]$, we find that
\begin{align*}
\mathbb{P}( Y^{ab} - \mathbb{E}(Y^{ab})  > x\mid \mathbf{Z}_n=\mathbf{z}_n) 
&\leq\; \exp \bigl(- xn^2\bigr)\prod_{1\leq i\le j\leq n}\exp\Bigl[3 \theta_i \theta_j \rho_nS_{z_iz_j}|\mathds{1}^{ab}_{ij}(\mathbf{e}_n,\mathbf{z}_n)|\Bigr]\\
&=\; \exp \bigl(- xn^2\bigr)\exp\Bigl[3\rho_n \sum_{1\leq i\le j\leq n} \theta_i \theta_j S_{z_iz_j} |\mathds{1}^{ab}_{ij}(\mathbf{e}_n,\mathbf{z}_n)|\Bigr].
\end{align*}
By the definition of $\mathds{1}^{ab}_{ij}(\mathbf{e}_n,\mathbf{z}_n)$, we can bound its absolute value by $ 2 ( \mathds{1}\{e_i \neq z_i\}+  \mathds{1}\{e_j \neq z_j\} )$. So we have 
\begin{align*}
 \sum_{1\leq i\le j\leq n} \theta_i \theta_j S_{z_iz_j} |\mathds{1}^{ab}_{ij}(\mathbf{e}_n,\mathbf{z}_n)|
&\leq\;2 s_{\max}  \sum_{1\leq i\le j\leq n} \theta_i \theta_j ( \mathds{1}\{e_i \neq z_i\}+  \mathds{1}\{e_j \neq z_j\} )\\
&\leq\;4 s_{\max}  \sum_{1\leq i, j\leq n} \theta_i \theta_j  \mathds{1}\{e_i \neq z_i\} \\
&\leq\;4 s_{\max}  \sum_{1\leq i \leq n} \theta_i \mathds{1}\{e_i \neq z_i\} \left( \sum_{1\leq j \leq n} \theta_j  \right)\\
&\leq\;4 s_{\max} m(\mathbf{e}_n,\mathbf{z}_n)n.
\end{align*}
Then, we find that
\begin{align*}
\mathbb{P}( Y^{ab} - \mathbb{E}(Y^{ab})  > x\mid \mathbf{Z}_n=\mathbf{z}_n) 
&\leq\; \exp \bigl(- xn^2 + 12\rho_ns_{\max} m(\mathbf{e}_n,\mathbf{z}_n)n \bigr)\,.
\end{align*}
An analogous bound can be derived for the left deviations. Therefore,
\begin{equation*}
\mathbb{P}( |W_{ab}(\mathbf{e}_n,\mathbf{z}_n)| > x\,\mid \mathbf{Z}_n=\mathbf{z}_n) \;\leq\;  2\exp\bigl(- xn^2 + 12\rho_ns_{\max} m(\mathbf{e}_n,\mathbf{z}_n)n\bigr)\,.
\end{equation*}

\subsection{Proof of Proposition \ref{prop7.7}}\label{proof_prop7.7}

Applying the concentration inequality from Proposition \ref{propB.2} and a union bound over the number of differences $m$ and the community assignments $\mathbf{e}_n \in [k]^n$ such that $m(\mathbf{e}_n, \mathbf{z}_n) = m$, and using the standard bound $\binom{n}{m} \leq n^m$, we obtain
\begin{align}\label{ineq_W}
\mathbb{P}\Big( 
\sup_{\substack{a,b \in [k],\, m \in [n],\\ \mathbf{e}_n \in [k]^n:\, m(\mathbf{e}_n,\mathbf{z}_n)=m}} 
&|W_{ab}(\mathbf{e}_n,\mathbf{z}_n)| > x 
\,\Big|\, \mathbf{Z}_n=\mathbf{z}_n \Big)
\nonumber\\&\leq 2 \sum_{a,b \in [k]}\sum_{m=1}^n \binom{n}{m} (k-1)^m 
\exp\bigl(- xn^2 + 12\rho_n s_{\max} mn\bigr) \nonumber \\
&\leq 2k^2 \sum_{m=1}^n 
\exp\bigl(- xn^2 + 12\rho_n s_{\max} mn + m \log[(k-1)n]\bigr).
\end{align}
By setting  $x = c \rho_n m/n$, the probability above can be bounded as follows 
\begin{align*}
\mathbb{P}( \; \sup_{a,b,m,\mathbf{e}_n} |W_{ab}(\mathbf{e}_n,\mathbf{z}_n)| > x\mid \mathbf{Z}_n=\mathbf{z}_n) \;&\leq 2k^2 \sum_{m=1}^n\exp\bigl(- m(c\rho_n n - 12\rho_ns_{\max} n-\log [(k-1)n])).
\end{align*}
Given $\rho_n \geq \log n / n$ and choosing $c > 6s_{\max}$ sufficiently large, the series above is summable over $n$. By the Borel-Cantelli lemma, it follows that
$$
 |W_{ab}(\mathbf{e}_n,\mathbf{z}_n) | \leq  \frac{c \rho_n m(\mathbf{e}_n,\mathbf{z}_n)}{n},
$$
simultaneously for all $a,b \in [k]$ and $\mathbf{e}_n \in[k]^n$, eventually almost surely as $n \to \infty$.
This proves the first statement. To prove the second, we add and subtract the corresponding conditional expectations $\mathbb{E}(o_{ab}(\mathbf{e}_n) \mid  \mathbf{z}_n)$ and $\mathbb{E}(o_{ab}(\mathbf{z}_n) \mid  \mathbf{z}_n)$. Applying the triangle inequality and combining \eqref{ineq_W} with the fact that
\begin{align*}
 \left|\frac{\mathbb E(o_{ab}(\mathbf{e}_n)\mid \mathbf{z}_n)}{n^2} - \frac{\mathbb E(o_{ab}(\mathbf{z}_n)\mid \mathbf{z}_n)}{n^2}\right| \leq \frac{2 s_{\max} \rho_n m(\mathbf{e}_n,\mathbf{z}_n)}{n},
\end{align*}
we obtain 
\begin{align*}
 \left|\frac{o_{ab}(\mathbf{e}_n)}{n^2} - \frac{o_{ab}(\mathbf{z}_n)}{n^2}\right|  &=  \left|W_{ab}(\mathbf{e}_n,\mathbf{z}_n)  +\frac{\mathbb E(o_{ab}(\mathbf{e}_n)\mid \mathbf{z}_n)}{n^2} - \frac{\mathbb E(o_{ab}(\mathbf{z}_n)\mid \mathbf{z}_n)}{n^2} \right|\\
&  \leq  \left|W_{ab}(\mathbf{e}_n,\mathbf{z}_n)  \right| + \left|\frac{\mathbb E(o_{ab}(\mathbf{e}_n)\mid \mathbf{z}_n)}{n^2} - \frac{\mathbb E(o_{ab}(\mathbf{z}_n)\mid \mathbf{z}_n)}{n^2}\right|\\
& \leq  \frac{c \rho_n m(\mathbf{e}_n,\mathbf{z}_n)}{n} +\frac{2 s_{\max} \rho_n m(\mathbf{e}_n,\mathbf{z}_n)}{n}\\
&\leq  \frac{c' \rho_n m(\mathbf{e}_n,\mathbf{z}_n)}{n},
\end{align*}
with $c'>0$ a given constant only depending on $S$.

\subsection{Proof of Theorem \ref{thm7.10}}\label{proof_thm7.10}
From now on, we will assume that Proposition \ref{prop7.7} holds, as this happens simultaneously for all $a,b\in [k]$ and all $\mathbf{e}_n \in [k]^n$ eventually almost surely as $n \to \infty$. Therefore, all subsequent computations are carried out under the event $\mathcal C$ defined in \eqref{def_C} and assuming the bounds in Proposition  \ref{prop7.7}.
Observe that 
\begin{align*}
X(\mathbf{e}_n,\mathbf{Z}_{n})- X(\mathbf{Z}_{n},\mathbf{Z}_{n}) 
&= \sum_{a,b} \Biggl[ f\Bigl(\frac{o_{ab}(\mathbf{e}_n)}{n^2}\Bigr) - f\Bigl(\frac{\mathbb{E}[o_{ab}(\mathbf{e}_n)\mid \mathbf{Z}_n]}{n^2}\Bigr) \Biggr] \\
&\quad - 2 \sum_a \Biggl[ f\Bigl(\frac{D_a(\mathbf{e}_n)}{n^2}\Bigr) - f\Bigl(\frac{\mathbb{E}[D_a(\mathbf{e}_n)\mid \mathbf{Z}_n]}{n^2}\Bigr) \Biggr] \\
&\quad - \sum_{a,b} \Biggl[ f\Bigl(\frac{o_{ab}(\mathbf{Z}_n)}{n^2}\Bigr) - f\Bigl(\frac{\mathbb{E}[o_{ab}(\mathbf{Z}_n)\mid \mathbf{Z}_n]}{n^2}\Bigr) \Biggr] \\
&\quad + 2 \sum_a \Biggl[ f\Bigl(\frac{D_a(\mathbf{Z}_n)}{n^2}\Bigr) - f\Bigl(\frac{\mathbb{E}[D_a(\mathbf{Z}_n)\mid \mathbf{Z}_n]}{n^2}\Bigr) \Biggr],
\end{align*}
with $
f(x) = x\log x .$
Since $KL(x||y)=x \log \frac xy +y-x \geq 0$ we have, for all $x,y>0$, that 
\[
f(x) \geq x\log y + x-y\,.
\]
We use this inequality for $x=\mathbb{E}[o_{ab}(\mathbf{e}_n)\mid \mathbf{Z}_n]/n^2$ and $y=o_{ab}(\mathbf{e}_n)/n^2$ and we obtain that
\[
f\Bigl(\frac{\mathbb{E}[o_{ab}(\mathbf{e}_n)\mid \mathbf{Z}_n]}{n^2}\Bigr) 
\;\geq\; \frac{\mathbb{E}[o_{ab}(\mathbf{e}_n)\mid \mathbf{Z}_n]}{n^2} 
\log \frac{o_{ab}(\mathbf{e}_n)}{n^2} + \frac{\mathbb{E}[o_{ab}(\mathbf{e}_n)\mid \mathbf{Z}_n]}{n^2} - \frac{o_{ab}(\mathbf{e}_n)}{n^2}.
\]
Similarly, we bound 
\begin{align*}
f\Bigl(\frac{o_{ab}(\mathbf{Z}_n)}{n^2}\Bigr) 
&\geq\; \frac{o_{ab}(\mathbf{Z}_n)}{n^2} \log \frac{\mathbb{E}[o_{ab}(\mathbf{Z}_n)\mid \mathbf{Z}_n]}{n^2} + \frac{o_{ab}(\mathbf{Z}_n)}{n^2} - \frac{\mathbb{E}[o_{ab}(\mathbf{Z}_n)\mid \mathbf{Z}_n]}{n^2},\\
 f\Bigl(\frac{D_a(\mathbf{e}_n)}{n^2}\Bigr) 
&\geq\; \frac{D_a(\mathbf{e}_n)}{n^2} \log \frac{\mathbb{E}[D_a(\mathbf{e}_n)\mid \mathbf{Z}_n]}{n^2} + \frac{D_a(\mathbf{e}_n)}{n^2} - \frac{\mathbb{E}[D_a(\mathbf{e}_n)\mid \mathbf{Z}_n]}{n^2},\\
 f\Bigl(\frac{\mathbb{E}[D_a(\mathbf{Z}_n)\mid \mathbf{Z}_n]}{n^2}\Bigr) 
&\geq\; \frac{\mathbb{E}[D_a(\mathbf{Z}_n)\mid \mathbf{Z}_n]}{n^2} \log \frac{D_a(\mathbf{Z}_n)}{n^2} + \frac{\mathbb{E}[D_a(\mathbf{Z}_n)\mid \mathbf{Z}_n]}{n^2} - \frac{D_a(\mathbf{Z}_n)}{n^2}.
\end{align*}
By an abuse of notation, all the expectations below are conditionally on $\mathbf{Z}_n$, without any further notice.
Using this, we obtain the bound 
\begin{align*}
X(\mathbf{e}_n,\mathbf{Z}_{n})- X(\mathbf{Z}_{n},\mathbf{Z}_{n}) \;\leq\;& \sum_{1\leq a, b\leq k}\, \Bigl(\dfrac{o_{ab}(\mathbf{e}_n)}{n^2}- 
\mathbb{E}\Bigl(\dfrac{o_{ab}(\mathbf{e}_n)}{n^2}\Bigr)\Bigr)\left[  \log \Bigl(\frac{o_{ab}(\mathbf{e}_n)}{n^2}\Bigr) +1 \right]\\
&+  2  \sum_{1\leq a\leq k}\, \Bigl( \mathbb{E}\Bigl[\dfrac{D_{a}(\mathbf{e}_n)}{n^2}\Bigr]  -  \dfrac{D_{a}(\mathbf{e}_n)}{n^2}    \Bigr) \Bigl[ \log \Bigl(\frac{\mathbb{E}\Bigl[D_{a}(\mathbf{e}_n)\Bigr]}{n^2}\Bigr)+1 \Bigr]\\
&+ \sum_{1\leq a,b\leq k}\, \Bigl( \mathbb{E}\Bigl[\dfrac{o_{ab}(\mathbf{Z}_n)}{n^2}\Bigr]  -  \dfrac{o_{ab}(\mathbf{Z}_n)}{n^2}    \Bigr) \Bigl[ \log \Bigl(\frac{\mathbb{E}\Bigl[o_{ab}(\mathbf{Z}_n)\Bigr]}{n^2}\Bigr) +1 \Bigr]\\
&+ 2 \sum_{1\leq a\leq k}\, \Bigl(\dfrac{D_{a}(\mathbf{Z}_n)}{n^2}- 
\mathbb{E}\Bigl(\dfrac{D_{a}(\mathbf{Z}_n)}{n^2}\Bigr)\Bigr) \left[ \log \Bigl(\frac{D_{a}(\mathbf{Z}_n)}{n^2}\Bigr) +1 \right].
\end{align*}
Recall that $\sum_{ab} o_{ab}(\mathbf{e}_n)= \sum_{ab} o_{ab}(\mathbf{Z}_n)$ is a constant, and $D_{a}(\mathbf{e}_n)= \sum_{b} o_{ab}(\mathbf{e}_n)$. We can thus simplify the bound by 
\begin{align*}
X(\mathbf{e}_n,\mathbf{Z}_{n})- X(\mathbf{Z}_{n},\mathbf{Z}_{n}) \;\leq\;& \sum_{1\leq a, b\leq k}\, \Bigl(\dfrac{o_{ab}(\mathbf{e}_n)}{n^2}- 
\mathbb{E}\Bigl(\dfrac{o_{ab}(\mathbf{e}_n)}{n^2}\Bigr)\Bigr) \log \Bigl(\frac{o_{ab}(\mathbf{e}_n)}{n^2}\Bigr) \\
&+  2  \sum_{1\leq a\leq k}\, \Bigl( \mathbb{E}\Bigl[\dfrac{D_{a}(\mathbf{e}_n)}{n^2}\Bigr]  -  \dfrac{D_{a}(\mathbf{e}_n)}{n^2}    \Bigr) \log \left(\frac{\mathbb{E}\Bigl[D_{a}(\mathbf{e}_n)\Bigr]}{n^2}\right)\\
&+ \sum_{1\leq a,b\leq k}\, \Bigl( \mathbb{E}\Bigl[\dfrac{o_{ab}(\mathbf{Z}_n)}{n^2}\Bigr]  -  \dfrac{o_{ab}(\mathbf{Z}_n)}{n^2}    \Bigr)\log \left(\frac{\mathbb{E}\Bigl[o_{ab}(\mathbf{Z}_n)\Bigr]}{n^2}\right) \\
&+ 2 \sum_{1\leq a\leq k}\, \Bigl(\dfrac{D_{a}(\mathbf{Z}_n)}{n^2}- 
\mathbb{E}\Bigl(\dfrac{D_{a}(\mathbf{Z}_n)}{n^2}\Bigr)\Bigr) \log \Bigl(\frac{D_{a}(\mathbf{Z}_n)}{n^2}\Bigr).
\end{align*}
Observe that, by denoting $R=R(\mathbf{Z}_{n},\mathbf{Z}_{n})$, we can write 
$$\frac{1}{n^2} \mathbb{E}\Bigl[o_{ab}(\mathbf{Z}_n)\Bigr]=  \rho_n[RSR]_{ab }, $$and $$\frac{1}{n^2}\mathbb{E}\Bigl[D_a(\mathbf{Z}_n)\Bigr] = \rho_n [RSR\mathbf 1]_{a }. $$
We also can rewrite
\begin{align*}
\log \Bigl(\dfrac{o_{ab}(\mathbf{e}_n)}{n^2}\Bigr) \;=\; \log \Bigl[\rho_n [RSR]_{ab} \Bigl(1 + \frac{ o_{ab}(\mathbf{e}_n)/n^2 - \rho_n[RSR]_{ab}}{\rho_n[RSR]_{ab}}\Bigr)\Bigr],
\end{align*}
and similarly 
\begin{align*}
\log  \Bigl(\dfrac{\mathbb E [D_{a}(\mathbf{e}_n)]}{n^2}\Bigr)  = \log \Bigl[ \rho_n[RSR\mathbf 1]_{a} \Bigl(1 + \frac{ \mathbb E [D_{a}(\mathbf{e}_n)]/n^2 -\rho_n [RSR\mathbf 1]_{a}}{\rho_n[RSR\mathbf 1]_{a}}\Bigr)\Bigr].
\end{align*}
We then obtain, with $W_{ab}(\mathbf{e}_n,\mathbf{z}_n) $ defined in \eqref{def_W}, that 
\begin{equation}\label{XX}
\begin{split}
X(\mathbf{e}_n,\mathbf{Z}_{n})-& X(\mathbf{Z}_{n},\mathbf{Z}_{n}) \\&\;\leq\; \sum_{1\leq a, b\leq k}\, W_{ab}(\mathbf{e}_n,\mathbf{z}_n) \log \Bigl(\rho_n [RSR]_{ab }\Bigr) -  2  \sum_{1\leq a,b \leq k}\,  W_{ab}(\mathbf{e}_n,\mathbf{z}_n)    \log  \Bigl(\rho_n [RSR\mathbf 1]_{a }\Bigr)  \\
&+  \sum_{1\leq a, b\leq k}\, \Bigl(\dfrac{o_{ab}(\mathbf{e}_n)}{n^2}- \mathbb{E}\Bigl(\dfrac{o_{ab}(\mathbf{e}_n)}{n^2}\Bigr)\Bigr) \log \Bigl(  1 + \frac{ o_{ab}(\mathbf{e}_n)/n^2 -\rho_n [RSR]_{ab}}{\rho_n[RSR]_{ab}} \Bigr) \\
&+ 2 \sum_{1\leq a\leq k}\, \Bigl( \mathbb{E}\Bigl[\dfrac{D_{a}(\mathbf{e}_n)}{n^2}\Bigr]  -  \dfrac{D_{a}(\mathbf{e}_n)}{n^2}    \Bigr) \log \Bigl(   1 + \frac{ \mathbb E[D_{a}(\mathbf{e}_n)]/n^2 -\rho_n [RSR\mathbf 1]_{a}}{\rho_n[RSR\mathbf 1]_{a}} \Bigr).
\end{split}
\end{equation}
Let us control the arguments of the logarithms in the last two terms. Recall that $R=R(\mathbf{Z}_{n},\mathbf{Z}_{n})$. On the event $\mathcal C$, using the triangle inequality and Proposition~\ref{prop7.7}, we have:
\begin{align}\label{borne_W_2}
\left| \frac{ o_{ab}(\mathbf{e}_n)}{n^2} -\rho_n [RSR]_{ab} \right|& \leq   \left|\frac{o_{ab}(\mathbf{e}_n)}{n^2} - \frac{o_{ab}(\mathbf{Z}_n)}{n^2}\right|  + \left| \frac{o_{ab}(\mathbf Z_n)}{n^2} - \rho_n [R(\mathbf{Z}_{n},\mathbf{Z}_{n})SR(\mathbf{Z}_{n},\mathbf{Z}_{n})]_{ab}  \right| \nonumber \\
& \leq \frac{c\rho_n m}{n}  + \sqrt{\frac{8 s_\text{max} \rho_n  \log k}{n^2}}.
\end{align}
Since $m \leq \epsilon n$ and $\rho_n \geq \log n/n$, this upper bound is $(c\epsilon + o(1))\rho_n$. Given that $\alpha < \min_a \pi_a$, the denominator in the logarithm satisfies $\rho_n [RSR]_{ab} \geq \alpha^2 \rho_n s_{\min}$. Thus, the relative variation in the log argument is bounded by $c'\epsilon /(\alpha^2 s_{\min})$. Similarly, the deterministic relative variation in the last logarithm is bounded by $c''\epsilon/(\alpha^2 s_{\min})$. By choosing $\epsilon > 0$ sufficiently small such that 
$$\frac{c'\epsilon}{\alpha^2 s_{\min}} \leq \frac{1}{2} \quad \text{and} \frac{c''\epsilon}{\alpha^2 s_{\min}} \leq \frac{1}{2},$$ these arguments are bounded in absolute value by $1/2$ for all sufficiently large $n$. Since the function $x \mapsto \log(1+x)$ is Lipschitz continuous on $[-1/2, 1/2]$ with $\log(1+x) \leq 2|x|$, we can safely bound the product terms using $|A\log(1+X) |\leq 2|A||X|$.
The difference in \eqref{XX} is thus upper bounded by $W(\mathbf{e}_n,\mathbf{z}_n) + \widetilde W(\mathbf{e}_n,\mathbf{z}_n) $ with
\begin{align*} 
W(\mathbf{e}_n,\mathbf{z}_n)  = &\sum_{1\leq a, b\leq k}\, W_{ab}(\mathbf{e}_n,\mathbf{z}_n) \log \Bigl( \rho_n[RSR]_{ab }\Bigr) -  2  \sum_{1\leq a,b \leq k}\,  W_{ab}(\mathbf{e}_n,\mathbf{z}_n)    \log  \Bigl(\rho_n [RSR\mathbf 1]_{a }\Bigr),  
\end{align*}
and 
\begin{align*} 
\widetilde  W(\mathbf{e}_n,\mathbf{z}_n)  = & 2\sum_{1\leq a, b\leq k}\, \Bigl| \dfrac{o_{ab}(\mathbf{e}_n)}{n^2}- \mathbb{E}\Bigl(\dfrac{o_{ab}(\mathbf{e}_n)}{n^2}\Bigr)\Bigr| \left| \frac{ o_{ab}(\mathbf{e}_n)}{n^2} -\rho_n [RSR]_{ab} \right|  \frac{ 1}{\rho_n[RSR]_{ab}}\\
&+ 4 \sum_{1\leq a\leq k}\, \Bigl|\dfrac{D_{a}(\mathbf{e}_n)}{n^2}- \mathbb{E}\Bigl(\dfrac{D_{a}(\mathbf{e}_n)}{n^2}\Bigr)\Bigr|  \left| \frac{ \mathbb{E}[D_{a}(\mathbf{e}_n)]}{n^2} -\rho_n [RSR\mathbf 1]_{a} \right|    \frac{ 1}{\rho_n[RSR\mathbf 1]_{a}}.
\end{align*}
We now focus on bounding $ \widetilde W(\mathbf{e}_n,\mathbf{z}_n)$. 
On $\mathcal C$ we have the bound
\begin{align}\label{borne_W_1}
\Bigl| \dfrac{o_{ab}(\mathbf{e}_n)}{n^2}- \mathbb{E}\Bigl(\dfrac{o_{ab}(\mathbf{e}_n)}{n^2}\Bigr)\Bigr| 
\leq \sqrt{\frac{8 s_\text{max} \rho_n  \log k}{n^2}},
\end{align}
simultaneously for all $a,b \in [k]$.
Then, since $ D_{a}(\mathbf{e}_n)= \sum_b o_{ab}(\mathbf{e}_n)$, the bounds \eqref{borne_W_2} and \eqref{borne_W_1} imply that
\begin{align*} 
\widetilde  W(\mathbf{e}_n,\mathbf{z}_n)  \leq &\, 2\sum_{1\leq a, b\leq k}\,  \sqrt{\frac{8 s_\text{max} \rho_n  \log k}{n^2}} \left( \frac{c\rho_n m(\mathbf{e}_n,\mathbf{z}_n)}{n}  + \sqrt{\frac{8 s_\text{max} \rho_n  \log k}{n^2}} \right)  \frac{ 1}{\alpha^2 \rho_n s_{\min}}\\
&+ 4 \sum_{1\leq a\leq k}\,  k \sqrt{\frac{8 s_\text{max} \rho_n  \log k}{n^2}}\left( \frac{kc\rho_n m(\mathbf{e}_n,\mathbf{z}_n)}{n}  +k \sqrt{\frac{8 s_\text{max} \rho_n  \log k}{n^2}} \right)   \frac{ 1}{k\alpha^2 \rho_n s_{\min}}\\
\leq & 6 k^2 \left( \frac{c''\rho_n m(\mathbf{e}_n,\mathbf{z}_n)}{n}  +  \sqrt{\frac{8 s_\text{max} \rho_n  \log k}{n^2}}\right)^2 \frac{ 1}{\alpha^2 \rho_n s_{\min}},
\end{align*}
with $c''$ a constant. Since $(a+b)^2 \leq 2a^2 +2b^2$, we further obtain
\begin{align*} 
\widetilde  W(\mathbf{e}_n,\mathbf{z}_n)  \leq & \frac{ 6 k^2}{\alpha^2 \rho_n s_{\min}} \left[ \left( \frac{c''\rho_n m(\mathbf{e}_n,\mathbf{z}_n)}{n} \right)^2 + \left( \sqrt{\frac{8 s_\text{max} \rho_n  \log k}{n^2}}\right)^2 \right]\\
\leq & C \frac{\rho_n m^2+1}{n^2 },
\end{align*}
where $C$ a constant only depending on $S$.
Then, by Proposition~\ref{prop7.9}, we have  that 
\begin{align*}
\mathbb P \left(  \left\{ X(\mathbf{e}_n,\mathbf{z}_n)  - X(\mathbf{z}_n,\mathbf{z}_n) > x \right\} \cap \mathcal C \mid \mathbf Z_n= \mathbf z_n\right)
&\leq \mathbb P \left( W(\mathbf{e}_n,\mathbf{z}_n) > x - \widetilde W (\mathbf{e}_n,\mathbf{z}_n)  \mid \mathbf Z_n= \mathbf z_n\right)\\
&\leq \exp\Bigl[ - \sup_{t\in (0,1 ]}  \Bigl\{n^2 \bigl[ xt-  \rho_n  C_t(R,S) \bigr]\Bigr\}  +\phi(n,m) \Bigr] 
\end{align*}
with $\phi(n,m) = C(\rho_n m^2 +\rho_n n m +1)$
and $C$ a constant only depending only on $S$ and $k$.

\subsection{Proof of Lemma \ref{lemma_der}}\label{proof_lemma_der}

Let us first compute the derivative of $ H_{S,n}(R)$ with respect to $R$.
Let $Q = RSR^\intercal$. The function can be written as 
$$ H_{S,n}(R) = \sum_{a,b}\rho_n Q_{ab} \log \left( \frac{Q_{ab}}{\rho_n n^2 q_a q_b} \right),$$
where $q_a = [Q \veco]_a = \sum_b Q_{ab}$ is the sum of the $a$-th row of $Q$.
We first differentiate $Q_{ab}$ with respect to $R_{uv}$.
By definition, $Q_{ab} = \sum_{k,l} R_{ak} S_{kl} R_{bl}$. Differentiating with respect to a specific entry $R_{uv}$ 
$$\frac{\partial Q_{ab}}{\partial R_{uv}} = \delta_{au} \sum_l S_{vl} R_{bl} + \delta_{bu} \sum_k R_{ak} S_{kv}.$$
Let us evaluate this derivative at the point $R = D = \tDiag(r)$, where $R_{ij} = r_i$ if $i=j$, and $0$ otherwise.
$$\frac{\partial Q_{ab}}{\partial R_{uv}} \bigg|_{R=D} = \delta_{au} S_{vb} r_b + \delta_{bu}r_a S_{av}.$$
We also have that
$$\frac{\partial  H_{S,n}}{\partial R_{uv}} = \underbrace{\sum_{a,b} \rho_n \frac{\partial Q_{ab}}{\partial R_{uv}} \log \left( \frac{Q_{ab}}{\rho_nn^2 q_a q_b} \right)}_{A(R)} + \underbrace{\sum_{a,b} \rho_nQ_{ab} \frac{\partial}{\partial R_{uv}} \left[ \log Q_{ab} - \log q_a - \log q_b - \log (\rho_n n^2) \right]}_{B(R)}.$$
For $B(R)$ we have
$$B(R) =\sum_{a,b} \rho_n Q_{ab} \left[\frac{1}{Q_{ab}} \frac{\partial Q_{ab}}{\partial R_{uv}}-\frac{1}{q_a} \frac{\partial q_a}{\partial R_{uv}}-\frac{1}{q_b} \frac{\partial q_b}{\partial R_{uv}}\right].$$
Since $\sum_b Q_{ab} = q_a$, we can simplify the sums 
$$\sum_{a,b} \frac{\partial Q_{ab}}{\partial R_{uv}} = \frac{\partial}{\partial R_{uv}} (\sum_{a} q_{a}), \quad \sum_{a,b} \frac{Q_{ab}}{q_a} \frac{\partial q_a}{\partial R_{uv}} = \sum_a \frac{\partial q_a}{\partial R_{uv}} \text{ and } \sum_{a,b} \frac{Q_{ab}}{q_b} \frac{\partial q_b}{\partial R_{uv}} = \sum_b \frac{\partial q_b}{\partial R_{uv}}.$$
So all these terms are identical and we obtain 
$$B(R)= \rho_n \left[ \frac{\partial}{\partial R_{uv}} (\sum_a q_a)-\frac{\partial}{\partial R_{uv}} (\sum_a q_a)-\frac{\partial}{\partial R_{uv}} (\sum_a q_a)\right] = - \rho_n\frac{\partial}{\partial R_{uv}} \sum_{a,b} Q_{ab}.$$
At $R=D$,
$$B(D)=-\rho_n \sum_{a,b} (\delta_{au} S_{vb} r_b + \delta_{bu} r_a S_{av})=- \rho_n(Sr)_v - \rho_n(Sr)_v=-2 \rho_n(Sr)_v.$$
Let us now compute $A(D)$.
Substituting $Q_{ab} = r_a S_{ab} r_b$ and $q_a = r_a (Sr)_a$ (values at $R=D$) into the logarithm 
$$\log \left( \frac{r_a S_{ab} r_b}{\rho_n n^2 (r_a (Sr)_a) (r_b (Sr)_b)} \right)=\log \left( \frac{S_{ab}}{\rho_nn^2 (Sr)_a (Sr)_b} \right).$$
We then plug in the derivative of $Q_{ab}$ 
\begin{align*}
A(D)&=\rho_n\sum_{a,b} (\delta_{au} S_{vb} r_b + \delta_{bu} r_a S_{av})\log \left( \frac{S_{ab}}{\rho_nn^2 (Sr)_a (Sr)_b} \right)\\
&=2\rho_n \sum_{a} r_a S_{av}\log \left( \frac{S_{au}}{\rho_nn^2 (Sr)_a (Sr)_u} \right)
\end{align*}
Finally, putting everything together 
$$\frac{\partial H_{S,n}}{\partial R_{uv}} \bigg|_{R=D}=2\rho_n \sum_{a} r_a S_{av}\log \left( \frac{S_{au}}{\rho_nn^2 (Sr)_a (Sr)_u} \right)-2\rho_n (Sr)_v.$$
Now we can compute the derivative of $G(\lambda) = H_{S,n}(D + \sum \lambda_{bb'} \Delta_{bb'}) = H_{S,n}(M(\lambda))$ with $M(\lambda) = \tDiag(R^\intercal \1) + \sum_{b \neq b'} \lambda_{bb'} \Delta_{bb'}$.
To differentiate with respect to a scalar parameter $\lambda_{uv}$, we use the chain rule for matrix-valued functions 
$$\frac{\partial G}{\partial \lambda_{uv}}=\sum_{i,j}\frac{\partial  H_{S,n}}{\partial M_{ij}}\times \frac{\partial M_{ij}}{\partial \lambda_{uv}}.$$
Let us look at the partial derivatives of $M$ with respect to $\lambda_{uv}$:
if $(i,j) = (u,v)$, then $\frac{\partial M_{uv}}{\partial \lambda_{uv}} = 1$,
if $(i,j) = (v,v)$, then $\frac{\partial M_{vv}}{\partial \lambda_{uv}} = -1$,
and in all other cases the derivative is zero.
Plugging these values into the chain rule sum, only two terms remain 
$$\frac{\partial G}{\partial \lambda_{uv}}=\frac{\partial  H_{S,n}}{\partial M_{uv}}-\frac{\partial  H_{S,n}}{\partial M_{vv}}.$$
At $\lambda = 0$, the matrix $M(0)$ is simply the diagonal matrix
$D = \tDiag(R^\intercal \veco)$. We therefore obtain 
\begin{align*}
\frac{\partial G}{\partial \lambda_{uv}}\bigg|_{\lambda=0} &=\frac{\partial H_{P,n}}{\partial R_{uv}}\bigg|_{R=D}-\frac{\partial H_{P,n}}{\partial R_{vv}}\bigg|_{R=D}\\
&=2 \sum_a \rho_nr_a S_{av}\left[\log \left( \frac{S_{au}}{\rho_nn^2 (Sr)_a (Sr)_u} \right)-\log \left( \frac{S_{av}}{\rho_nn^2 (Sr)_a (Sr)_{v}} \right)\right]\\
&=2 \sum_a \rho_n r_a S_{av}\log \left( \frac{S_{au} (Sr)_{v}}{S_{av} (Sr)_u} \right).
\end{align*}
Rearranging the terms to make the Kullback--Leibler divergence between two Poisson distributions appear, we obtain
$$\frac{\partial G}{\partial \lambda_{uv}}\bigg|_{\lambda=0}=-2 \rho_n \left[\sum_a r_a KL(S_{av} \| S_{au})-KL((Sr)_{v} \| (Sr)_u)
\right].$$

\section*{Acknowledgments}

I would like to sincerely thank my thesis supervisor, Catherine Matias, for her kindness and insightful guidance, which were essential to the completion of this article.

Furthermore, I acknowledge the use of artificial intelligence tools during the preparation of this manuscript. ChatGPT was utilized to enhance the overall clarity and linguistic flow of the text, while Gemini suggested the application of the Data Processing Inequality within the proof of Lemma 3. I have carefully reviewed, edited, and validated all AI-generated suggestions, and I assume full and complete responsibility for the final content of this work.

\appendixpage
\appendix
\section{Calculations of the criterion} \label{proof_calcul_Q}

\begin{lemma} \label{lemma_calcul_Q}
Let $\hat{\theta}_i$ and $\hat{S}_{ab}$ denote the maximum likelihood estimators of the parameters $\theta_i$ and $S_{ab}$ under the degree-corrected stochastic block model (see \eqref{def_est}). Then, the conditional maximum log-likelihood satisfies
\[
\max_{\boldsymbol{\theta},S}
\frac{2}{n^2}
\log \mathbb P_{\boldsymbol{\theta},S}
(\mathbf A_{n\times n} \mid \mathbf Z_n=\mathbf z_n)
=
Q(\mathbf z_n)+C,
\]
where
\begin{equation}\label{def_Q}
Q(\mathbf z_n)
=
\frac1{n^2}
\sum_{a,b}
o_{ab}(\mathbf z_n)
\log\left(
\frac{
o_{ab}(\mathbf z_n)
}{
D_a(\mathbf z_n)D_b(\mathbf z_n)
}
\right),
\end{equation}
and $C$ is a constant independent of $\mathbf z_n$.
\end{lemma}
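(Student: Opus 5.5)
Write the log-likelihood from the factorized form given in Section~\ref{sect_model}. Up to terms depending only on $\mathbf A_{n\times n}$ (the factorials and powers of $2$), it equals
\[
\sum_i d_i\log\theta_i+\frac12\sum_{a,b}\Bigl[o_{ab}(\mathbf z_n)\log(\rho_nS_{ab})-\rho_n\,n_a(\mathbf z_n)n_b(\mathbf z_n)S_{ab}\Bigr].
\]
The argument has three steps.

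\textbf{Step 1: maximise over $\btheta$.} We do this under the constraint \eqref{def_contrainte}, using Lagrange multipliers or Gibbs' inequality separately within each block $a$. Maximising $\sum_{i:z_i=a}d_i\log\theta_i$ subject to $\sum_{i:z_i=a}\theta_i=n_a$ gives $\hat\theta_i=n_a d_i/D_a$, which is \eqref{def_est}. Note that the exponential term $\exp(-\tfrac{\rho_n}{2}n_an_bS_{ab})$ was already written using the constraint. Plugging $\hat\theta_i$ back in gives
\[
\sum_i d_i\log d_i+\sum_a D_a\log n_a-\sum_a D_a\log D_a .
\]
The first sum depends only on $\mathbf A$, so it goes into the constant.

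\textbf{Step 2: maximise over $\rho_nS_{ab}$.} Maximising $o_{ab}\log(\rho_nS_{ab})-\rho_n n_an_bS_{ab}$ gives $\rho_n\hat S_{ab}=o_{ab}/(n_an_b)$. The value is then
\[
\frac12\sum_{a,b}\Bigl[o_{ab}\log o_{ab}-o_{ab}\log n_a-o_{ab}\log n_b-o_{ab}\Bigr].
\]
Since $\sum_{a,b}o_{ab}=\sum_i d_i$ is independent of $\mathbf z_n$, the last term is constant. By symmetry of $o$ and $\sum_b o_{ab}=D_a$, the middle terms reduce to $-\sum_a D_a\log n_a$.

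\textbf{Step 3: combine.} The $\sum_a D_a\log n_a$ from Step 1 exactly cancels the $-\sum_a D_a\log n_a$ from Step 2. This is the key point and the only place the constraint matters. What remains is
\[
\frac12\sum_{a,b}o_{ab}\log o_{ab}-\sum_a D_a\log D_a .
\]
Again using $\sum_b o_{ab}=D_a$ and symmetry, this equals
\[
\frac12\sum_{a,b}o_{ab}\log\frac{o_{ab}}{D_aD_b}.
\]
Multiplying by $2/n^2$ gives $Q(\mathbf z_n)$ plus a constant independent of $\mathbf z_n$.

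The only delicate points are the conventions $0\log 0=0$ (for empty blocks or $o_{ab}=0$) and the boundary case $d_i=0$. Both are handled by continuity, or, if $S$ is unconstrained in the maximisation, by the supremum being attained in the limit.
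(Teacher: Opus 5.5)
Your proposal is correct and follows essentially the same route as the paper. Both arguments substitute the closed-form maximizers $\hat\theta_i = n_{z_i}d_i/D_{z_i}$ and $\rho_n\hat S_{ab}=o_{ab}/(n_an_b)$, discard the $\mathbf z_n$-independent terms $\sum_i d_i\log d_i$ and $\sum_{a,b}o_{ab}$, and use $\sum_b o_{ab}=D_a$ together with the symmetry of $o$ to cancel the $\sum_a D_a\log n_a$ contributions. Your Lagrange-multiplier derivation of $\hat\theta$ simply makes explicit a step the paper takes for granted, as does your remark that the constraint removes $\btheta$ from the exponential term so the two maximizations decouple.
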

\begin{proof}
The maximum conditional log-likelihood is given by
\begin{align*}
\widetilde{Q}(\mathbf{z}_{n}) &= \max_{\boldsymbol{\theta},S} \left\{ \frac{2}{n^2} \log \mathbb{P}_{\boldsymbol{\theta},S} (\mathbf{A}_n \mid\mathbf{Z}_n= \mathbf{z}_{n}) \right\} \\
&=\frac{1}{n^2} \left( 2 \sum_{i=1}^n d_i \log(\hat{\theta}_i) + \sum_{a,b} o_{ab}(\mathbf{z}_{n}) \log (\rho_n\hat{s}_{ab}) - n_{a}(\mathbf{z}_{n}) n_{b}(\mathbf{z}_{n}) \rho_n\hat{S}_{ab}\right) + C,
\end{align*}
where $C$ is a constant. Substituting the expressions \eqref{def_est} for $\hat{\theta}_i$ and $\rho_n \hat{S}_{ab}$ and rearranging the terms gives 
\begin{align*}
\widetilde{Q}(\mathbf{z}_{n}) &=\frac{1}{n^2} \Bigg[ 2 \sum_{i=1}^n d_i \sum_{a=1}^k \1_{z_i=a}\log\left(\frac{n_{a}(\mathbf{z}_{n})}{D_{a}(\mathbf{z}_{n})}\right) + 2 \sum_{i=1}^n d_i \log(d_i) \\
&\qquad + \sum_{a,b} o_{ab}(\mathbf{z}_{n}) \log \left(\frac{o_{ab}(\mathbf{z}_{n})}{n_{a}(\mathbf{z}_{n}) n_{b}(\mathbf{z}_{n})}\right) - \sum_{a,b} o_{ab}(\mathbf{z}_{n}) \Bigg] + C.
\end{align*}
Since $\sum_{a,b} o_{ab}(\mathbf{z}_{n})$ is the total number of edges in the graph and $\sum_i d_i \log(d_i)$ does not depend on $\mathbf{z}_n$, these terms can be absorbed into the constant. Thus, we consider 
\[
\widetilde{Q}(\mathbf{z}_{n}) =\frac{1}{n^2} \left( 2 \sum_{a=1}^k D_{a}(\mathbf{z}_{n}) \log\left(\frac{n_{a}(\mathbf{z}_{n})}{D_{a}(\mathbf{z}_{n})}\right) + \sum_{a,b} o_{ab}(\mathbf{z}_{n}) \log \left(\frac{o_{ab}(\mathbf{z}_{n})}{n_{a}(\mathbf{z}_{n}) n_{b}(\mathbf{z}_{n})}\right) \right) + C.
\]
Using the relation $D_a(\mathbf{z}_n)=\sum_b o_{ab}(\mathbf{z}_n)$, we first obtain 
\[
\sum_{a=1}^k D_{a}(\mathbf{z}_{n}) \log\left(\frac{n_{a}(\mathbf{z}_{n})}{D_{a}(\mathbf{z}_{n})}\right) = \frac{1}{2} \sum_{a,b} o_{ab}(\mathbf{z}_{n}) \log \left( \frac{n_a(\mathbf{z}_{n}) n_b(\mathbf{z}_{n}) }{D_a(\mathbf{z}_{n}) D_b(\mathbf{z}_{n}) } \right).
\]
Then we have
\[
\widetilde{Q}(\mathbf{z}_{n}) =\frac{1}{n^2}  \sum_{a,b} o_{ab}(\mathbf{z}_{n}) \left(  \log \left( \frac{n_a(\mathbf{z}_{n}) n_b(\mathbf{z}_{n}) }{D_a(\mathbf{z}_{n}) D_b(\mathbf{z}_{n}) } \right) +\log \left(\frac{o_{ab}(\mathbf{z}_{n})}{n_{a}(\mathbf{z}_{n}) n_{b}(\mathbf{z}_{n})}\right) \right) + C.
\]
We further simplify the likelihood modularity to 
\begin{equation*}
Q(\mathbf{z}_{n}) = \frac{1}{n^2} \sum_{a,b} o_{ab}(\mathbf{z}_{n}) \log \left( \frac{ o_{ab}(\mathbf{z}_{n}) }{D_a(\mathbf{z}_{n}) D_b(\mathbf{z}_{n}) } \right).
\end{equation*}
\end{proof}

\section{Proof of Lemma \ref{lemma_C_positif}} \label{proof_C_positif}
We define the vector $\lambda = S\pi$, so that for every block $b \in [k]$,

\[
\lambda_b = [S\pi]_b = \sum_{a=1}^k \pi_a S_{ab}.
\]
We therefore have
\[
C(\pi, S) = \min_{b \neq b'} \sup_{t \in (0,1]} f_{bb'}(t),
\]
where, for a pair of distinct blocks $b \neq b'$, the functional $f_{bb'}(t)$ is given by
\[
f_{bb'}(t)
=
\sum_{a=1}^k \pi_a H_t(S_{ab'} \| S_{ab})
-
t\, KL(\lambda_{b'} \| \lambda_b).
\]
To prove that $C(\pi,S) > 0$, it is sufficient to show that for every pair $b \neq b'$,
\[
\sup_{t \in (0,1]} f_{bb'}(t) > 0.
\]
For readability, we write $f(t)$ instead of $f_{bb'}(t)$. Expanding the first term and using the identity
\[
\sum_a \pi_a S_{ab} = \lambda_b,
\]
we obtain
\[
\sum_{a=1}^k \pi_a H_t(S_{ab'} \| S_{ab})
=
(1-t)\lambda_{b'}
+
t\lambda_b
-
\sum_{a=1}^k \pi_a S_{ab'}^{1-t} S_{ab}^t.
\]
For the second term, we have

\[
t\, KL(\lambda_{b'} \| \lambda_b)
=
t\lambda_{b'} \log \frac{\lambda_{b'}}{\lambda_b}
+
t\lambda_b
-
t\lambda_{b'}.
\]
Combining both expressions, the linear terms in $\lambda_b$ and $\lambda_{b'}$ cancel out, yielding
\[
f(t)
=
\lambda_{b'}
-
\sum_{a=1}^k \pi_a S_{ab'}^{1-t} S_{ab}^t
-
t\lambda_{b'} \log \frac{\lambda_{b'}}{\lambda_b}.
\]
Evaluating this expression at $t=0$, we immediately obtain
\[
f(0)
=
\lambda_{b'}
-
\sum_{a=1}^k \pi_a S_{ab'}
-
0
=
\lambda_{b'}
-
\lambda_{b'}
=
0.
\]
We now differentiate $f(t)$ with respect to $t$. Using the identity
\[
\frac{d}{dt}(u^{1-t}v^t)
=
u^{1-t}v^t \log\frac{v}{u},
\]
we find
\[
f'(t)
=
\sum_{a=1}^k
\pi_a
S_{ab'}^{1-t} S_{ab}^t
\log\frac{S_{ab'}}{S_{ab}}
-
\lambda_{b'}
\log\frac{\lambda_{b'}}{\lambda_b}.
\]
Evaluating this derivative at $t=0$ gives
\[
f'(0)
=
\sum_{a=1}^k
\pi_a S_{ab'}
\log\frac{S_{ab'}}{S_{ab}}
-
\lambda_{b'}
\log\frac{\lambda_{b'}}{\lambda_b}.
\]
A direct application of the log-sum inequality yields
\[
\sum_{a=1}^k
\pi_a S_{ab'}
\log\frac{S_{ab'}}{S_{ab}}
\geq
\lambda_{b'}
\log\frac{\lambda_{b'}}{\lambda_b},
\]
which immediately implies
\[
f'(0) \geq 0.
\]
The log-sum inequality becomes an equality if and only if the ratio of the terms is constant for all $a$. That is, if there exists a constant $c>0$ such that
\[
\frac{\pi_a S_{ab'}}{\pi_a S_{ab}}
=
c
\quad\Longrightarrow\quad
S_{ab'}
=
c\, S_{ab}
\qquad
\forall a \in \{1,\dots,k\}.
\]
In matrix form, this condition means that column $b'$ of the matrix $S$ is a scalar multiple of column $b$. However, by assumption, $S$ has full rank. Since we are considering the case $b \neq b'$, it is therefore impossible for column $b'$ to be collinear with column $b$. Hence the equality condition is violated, which guarantees that the inequality is strict:
\[
f'(0) > 0.
\]
We have therefore shown that $f(0)=0$, $f(t)$ is continuously differentiable on $[0,1]$ and $f'(0)>0$. By a Taylor expansion around $0$, there exists $\varepsilon>0$ such that for all $t \in (0,\varepsilon)$,
\[
f(t) > 0.
\]
Consequently, the supremum over the interval $(0,1]$ is strictly positive for this pair:
\[
\sup_{t \in (0,1]} f_{bb'}(t) > 0
\qquad
\forall b \neq b'.
\]
Since the number of blocks $k$ is finite, the set of pairs $\{(b,b') : b \neq b'\}$ 
is finite as well. The minimum of finitely many strictly positive values is itself strictly positive, and we finally conclude that, when $S$ is full rank,
\[
C(\pi,S) > 0.
\]

\end{document}